\newcommand{\Rm}{\text{Rm}}
\newcommand{\Ric}{\text{Ric}}
\newcommand{\Vol}{\text{Vol}}
\newcommand{\dN}{\mathds{N}}
\newcommand{\dQ}{\mathds{Q}}
\newcommand{\dR}{\mathds{R}}
\newcommand{\dZ}{\mathds{Z}}
\newcommand{\Hess}{\text{Hess}}
\newcommand{\R}{\text{R}}
\newcommand{\cA}{\mathcal{A}}
\newcommand{\cB}{\mathcal{B}}
\newtheorem{theorem}{Theorem}[section]
\newtheorem{proposition}[theorem]{Proposition}
\newtheorem{lemma}[theorem]{Lemma}
\newtheorem{corollary}[theorem]{Corollary}
\newtheorem{question}{Question}[section]
\theoremstyle{definition}
\newtheorem{definition}[theorem]{Definition}
\theoremstyle{remark}
\newtheorem{remark}{Remark}[section]
\theoremstyle{remark}
\newtheorem{example}{Example}[section]
\theoremstyle{remark}
\theoremstyle{remark}
\begin{document}

\title{Six Dimensional Counterexample to the Milnor Conjecture}

\author{Elia Bru\`e, Aaron Naber and Daniele Semola}

\date{\today}

\begin{abstract}
We extend the previous work of \cite{BrueNaberSemolaMilnor1} by building a smooth complete manifold $(M^6,g,p)$ with $\Ric\geq 0$ and whose fundamental group $\pi_1(M^6)=\dQ/\dZ$ is infinitely generated.  The example is built with a variety of interesting geometric properties.  To begin the universal cover $\tilde M^6$ is diffeomorphic to $S^3\times \dR^3$, which turns out to be rather subtle as this diffeomorphism is increasingly twisting at infinity.  The curvature of $M^6$ is uniformly bounded, and in fact decaying polynomially.  The example is {\it locally} noncollapsed, in that $\Vol(B_1(x))>v>0$ for all $x\in M$.  Finally, the space is built so that it is {\it almost } globally noncollapsed.  Precisely, for every $\eta>0$ there exists radii $r_j\to \infty$ such that $\Vol(B_{r_j}(p))\geq r_j^{6-\eta}$. 

The broad outline for the construction of the example will closely follow the scheme introduced in \cite{BrueNaberSemolaMilnor1}.  The six-dimensional case requires a couple of new points, in particular the corresponding Ricci curvature control on the equivariant mapping class group is harder and cannot be done in the same manner.
\end{abstract}

\maketitle

\section{Introduction}

The main result of this paper is the existence of a six-dimensional smooth complete Riemannian manifold with nonnegative Ricci curvature and infinitely generated fundamental group:

\begin{theorem}\label{t:milnor6d}
	There exists a smooth complete Riemannian manifold $(M^6,g,p)$ with $\Ric\geq 0$, $\pi_1(M)=\dQ/\dZ$ and such that the universal cover $\tilde M^6$ is diffeomorphic to $S^3\times \dR^3$. 
	Further, for each $\eta>0$ the example may be constructed so that it satisfies
\begin{enumerate}
	\item $|\Rm|(x)\leq \frac{C(\eta)}{d(p,x)^{2-\eta}}$ for every $x\in M$,
	\item There exists $r_j\to \infty$ such that $\Vol(B_{r_j}(p)) \geq r_j^{6-\eta}$ , 
	\item There exists $r_j\to \infty$ such that $\Vol(B_{r_j}(p)) \leq r_j^{3+\eta}$ , 
	\item $\Vol(B_1(x))>v>0$ for all $x\in M$ .
\end{enumerate}
\end{theorem}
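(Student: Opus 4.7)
The strategy follows the scheme of \cite{BrueNaberSemolaMilnor1}: construct $(M^6,g,p)$ as the limit of a sequence of smooth $\Ric \ge 0$ metrics carrying a tower of free isometric $\dZ/N_k\dZ$ actions with $N_k \mid N_{k+1}$ ranging over a cofinal divisibility sequence, so that the limiting fundamental group is $\pi_1(M) = \varinjlim \dZ/N_k\dZ \cong \dQ/\dZ$. Correspondingly, the universal cover $\tilde M^6$ is to be built on $S^3 \times \dR^3$ as a doubly warped/twisted product in which the twisting along the $\dR^3$ direction is performed using the group structure on $S^3 \cong \mathrm{SU}(2)$. The ``increasing twist at infinity'' encoded by the diffeomorphism $\tilde M^6 \approx S^3 \times \dR^3$ will be precisely the record of these successive twists.

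The plan is to proceed inductively at dyadic scales $r_k \to \infty$. At stage $k$ one has a metric $g_k$ on $S^3 \times \dR^3$, isometric to $g_{k-1}$ on $B_{r_{k-1}}(p)$, carrying a free isometric $\dZ/N_k\dZ$ action generated by combining a rotation of order $N_k$ on the $\dR^3$ factor with a Hopf-type rotation on $S^3$. To pass to stage $k+1$, one extends the metric on the annular region $B_{2r_k}(p)\setminus B_{r_k}(p)$ so that: (a) $\Ric \ge 0$ is preserved throughout the interpolation; (b) the new metric admits a larger free isometric $\dZ/N_{k+1}\dZ$ action extending the previous one; (c) the step contributes only a small controlled amount to the curvature and volume errors. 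Once such an inductive step is available, tuning $r_k$ and the jumps $N_{k+1}/N_k$ produces the desired limit.

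The main obstacle is (a). In the construction of \cite{BrueNaberSemolaMilnor1} the twisting happens in an abelian $S^1$ fiber, and $\Ric \ge 0$ is controlled by warped-product formulas together with a cohomological analysis of an equivariant mapping class group. Here the fiber is the non-abelian $S^3$ and the twisting gauge takes values in $\mathrm{SU}(2)$, so both the curvature identities and the mapping class group analysis must be reworked; this is the point flagged in the abstract. The plan is to realize the interpolation as a principal $\mathrm{SU}(2)$-connection over the base $\dR^3$ whose curvature form is small in a scale-invariant sense, and to verify $\Ric \ge 0$ via a doubly warped O'Neill-type identity in which the positive contribution from the fiber curvature of $S^3$ dominates the negative off-diagonal terms produced by the twist. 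The extension problem for the $\dZ/N_{k+1}\dZ$ action must be treated by a direct non-abelian analysis rather than by the cohomological device used in $4$ dimensions.

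Given the inductive step, properties (1)--(4) follow by scale-balancing. Spreading the $k$-th interpolation uniformly over $B_{2r_k}(p)\setminus B_{r_k}(p)$ and choosing $r_k$ to grow fast enough produces the curvature decay $|\Rm|(x) \le C(\eta) d(p,x)^{-2+\eta}$ of (1). Property (4) follows from the uniform local geometry of the $S^3$ fiber, which is preserved at every stage. For (2) and (3), one notes that at radii $r$ only slightly larger than $r_k$ the quotient by $\dZ/N_k\dZ$ gives a fundamental domain of volume comparable to $r^6/N_k$: choosing $N_k$ comparable to $r_k^{3-\eta}$ yields $\Vol(B_{r_k}(p)) \le r_k^{3+\eta}$, while at radii $\tilde r_k$ chosen well before the next twist activates the volume grows back to a nearly Euclidean $\tilde r_k^{6-\eta}$. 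Extracting the two sequences required by (2) and (3) from this oscillation completes the argument, with the infinite tower of twists forcing $\pi_1(M)$ to be infinitely generated of the claimed form.
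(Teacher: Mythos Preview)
Your proposal misidentifies the central mechanism. The dimension drop from seven to six is \emph{not} achieved by replacing an abelian $S^1$ fiber with a non-abelian $S^3$ fiber and doing $\mathrm{SU}(2)$ gauge theory over $\dR^3$. In both the seven- and six-dimensional constructions the relevant group acting is still $S^1$; what changes is the cross-section on which it acts. In seven dimensions the cross-sections are $S^3\times S^3$ with the diagonal $(1,k)$ Hopf action; here they are $S^3\times S^2$ with the $(1,k)$ action given by Hopf rotation on $S^3$ and ordinary rotation on $S^2$. The genuine new difficulty---and what the abstract's remark about the equivariant mapping class group refers to---is that the $S^1$ action on the $S^2$ factor has fixed points, so the quotient $\pi_{(1,k)}:S^3\times S^2\to N$ has $S^1$-fibers of non-constant length and a connection that is not Yang-Mills. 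These produce large potentially negative Ricci terms (via the O'Neill formulas for $S^1$-bundles, not $\mathrm{SU}(2)$-bundles), and the heart of the paper is a delicate multi-step deformation of the metric on $S^3\times S^2$ that simultaneously flattens the fiber-length function and moves the connection to the Hopf connection while keeping $\Ric>0$. Your principal-$\mathrm{SU}(2)$-connection picture does not engage with this at all, and there is no indication that the ``small curvature'' heuristic you propose could handle the twisting step, which is a statement about isotopy classes of $(1,k)$-equivariant metrics on a compact five-manifold, not about connections over $\dR^3$.

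Two secondary points. First, your description of the action-extension step is too vague: one does not simply ``extend the metric on an annulus'' to acquire a larger isometry group; rather one glues $k_{j+1}$ copies of the twisted $\hat M_j$ into a model space so that the new generator $\gamma_{j+1}$ permutes the copies, and this requires first having untwisted the $\Gamma_j$ action at infinity to a pure Hopf rotation on $S^3$---precisely the step governed by the equivariant twisting theorem above. Second, your volume mechanism is off: the oscillation between $r^{6-\eta}$ and $r^{3+\eta}$ does not come from a relation $N_k\sim r_k^{3-\eta}$, but from the neck regions of the construction alternating between annuli in a full cone $C(S^3\times S^2)$ (volume growth $\sim r^6$) and annuli in a product $S^3\times C(S^2)$ (volume growth $\sim r^3$); one simply makes each type of region long enough to witness the two sequences.
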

\begin{remark}
	We point out that an argument analogous to the one used in this paper shows that the examples constructed in \cite{BrueNaberSemolaMilnor1} can be taken to have universal cover diffeomorphic to $S^3\times\dR^4$. 
\end{remark}
\begin{remark}
	Note that once an example with $\pi_1(M)=\dQ/\dZ$ is constructed we can automatically construct an example with $\pi_1(N)=\Gamma\leq \dQ/\dZ$ for any subgroup by looking at $N\equiv \tilde M/\Gamma$ .
\end{remark}

This provides a counterexample to a conjecture of Milnor \cite{Milnor} in a dimension lower dimension than our previous paper \cite[Theorem 1.1]{BrueNaberSemolaMilnor1}, where we built a seven-dimensional manifold with nonnegative Ricci curvature and infinitely generated fundamental group. We address the reader to the introduction of \cite{BrueNaberSemolaMilnor1} and to the survey papers \cite{ShenSormani} and \cite{Pansurvey} for a detailed bibliography and a discussion on the previous positive results about the Milnor conjecture and the known properties of fundamental groups of manifolds with $\Ric\ge 0$.
\\

The statement of Theorem \ref{t:milnor6d}.2 above should be compared with a result of B.-Y. Wu (see \cite{Wu}) saying that for $\alpha\leq \alpha(n)$ if $(M^n,g)$ has $\Ric\ge 0$ and the limit
\begin{equation}
\lim_{r\to \infty}\frac{\mathrm{vol}(B_r(p))}{r^{n-\alpha}}
\end{equation} 
exists and is strictly positive, then $\pi_1(M)$ is finitely generated. The effect of Theorem \ref{t:milnor6d}.2 is to show that the limit in the assumptions of \cite[Theorem 1.2]{Wu} cannot be replaced by a limsup.\\

The broad outline for the construction of the example in Theorem \ref{t:milnor6d} will closely follow the scheme introduced in \cite{BrueNaberSemolaMilnor1}.  The six dimensional case requires a couple new points, and in particular the corresponding Ricci curvature control on equivariant mapping class group is much harder.  We discuss this below.\\

As in \cite{BrueNaberSemolaMilnor1}, we will work at the level of the universal covering space $\tilde M$. 
A key step of the construction of the $7$-dimensional examples was the existence for each $k\in\dZ$ of a smooth family of Riemannian metrics $(S^3 \times S^3 , g_t)$ having some key properties.  The family began with the standard metric $g_0 = g_{S^3_1 \times S^3_1}$ and all had positive Ricci curvature.  Additionally, each metric $g_t$ is invariant under the left $(1,k)$ Hopf action:
\begin{align}
	\theta\cdot_{(1,k)}(s_1,s_2) 
	=
	(e^{i\theta}\cdot s_1,e^{ik\theta}\cdot s_2)\, ,
	\quad
	\theta \in S^1\, ,  \,  s_1,s_2 \in S^3 \, ,
\end{align}
and is such that $g_1 = \phi^* g_0$ for some diffeomorphism $\phi : S^3 \times S^3 \to S^3 \times S^3$ conjugating the $(1,k)$ to the $(1,0)$ action:
\begin{equation}
	\phi(\theta \cdot_{(1,k)}(s_1,s_2)) = \theta \cdot_{(1,0)} \phi(s_1,s_2) \, , \quad \text{for every $s_1,s_2\in S^3$}\, .
\end{equation}
See \cite[Section 6]{BrueNaberSemolaMilnor1} for more details. This family was used as a family of cross-sections in some annular regions on $\tilde M$, hence the resulting dimension was seven.\\

In order to build a six-dimensional example, we will replace the family of six-dimensional cross-sections discussed above with a five-dimensional family with analogous properties. To this aim, we define the left $(1,k)$ action on $S^3 \times S^2$ as
\begin{equation}
	\theta\cdot _{(1,k)}(s_1,s_2):= (e^{i\theta} \cdot s_1, e^{ik\theta}\cdot  s_2)\, , \quad \theta \in S^1 \, ,
\end{equation}
where $e^{i\theta }\cdot s_1$ indicates the left Hopf rotation in $S^3$ and $e^{ik\theta} \cdot s_2 := (e^{ik\theta}z,t)$, where we identify $s_2 = (z,t)\in S^2 \subset \mathbb{C} \times \dR$.

The key new contribution of the present paper is Theorem \ref{t:equivariant_mapping_class_S3xS2} below.  We show that when $k$ is even\footnote{The restriction to even $k$'s is topological, and not related to the existence of the family of metrics with positive Ricci curvature. Indeed, for $k$ odd, the $(1,k)$ and $(1,0)$ actions are not conjugated, see Lemma \ref{lemma:diffeostruct}.} there exists a smooth family of positively Ricci curved Riemannian metrics $(S^3\times S^2, g_t)$, $t\in [0,1]$, invariant with respect to the $(1,k)$-action, and such that $g_0 = g_{S^3\times S^2}$ and $g_1 = \phi^* g_0$, where $\phi : S^3 \times S^2 \to S^3 \times S^2$ is a diffeomorphism satisfying
\begin{equation}
	\phi(\theta\cdot_{(1,k)}(s_1,s_2)) = \theta \cdot _{(1,0)}\phi(s_1,s_2)\, , \quad \theta \in S^1\, ,\quad (s_1,s_2)\in S^3\times S^2\, .
\end{equation}

The construction of this family introduces some new challenges with respect to the analogous construction in \cite{BrueNaberSemolaMilnor1}, in particular some steps of the construction from \cite{BrueNaberSemolaMilnor1} necessarily fail.  In spirit this is because the $(1,k)$ action on $S^3\times S^2$ is more wild and less homogeneous in nature.  A little more precisely, and relying a little unfairly on the readers knowledge of \cite{BrueNaberSemolaMilnor1} with the understanding that this will be explained better later, when viewing $(S^3\times S^2,g_t)\stackrel{S^1}{\longrightarrow}(N,h_t)$ as a circle bundle over some underlying space, we cannot hope to equip this bundle with a family of Yang-Mills connections.  This consequently adds some rather dramatic Ricci curvature terms, and controlling them is quite delicate and requires something with a new flavor to it.  See Section \ref{subsec:outtwisting} for details.\\

We conclude the introduction with a list of open questions related to the Milnor conjecture that seem worthwhile for future investigation, without the aim of being complete.\\

In view of the existing positive results in dimension less or equal than $3$ (see \cite{CohnVossen} and \cite{Liu}) and of the counterexamples constructed in the present paper, the validity of the Milnor conjecture remains an open question in dimensions $4$ or $5$.

\begin{question}
Let $(M^n,g)$ be a complete with $\Ric\geq 0$ and $n=4$ or $n=5$. Is $\pi_1(M)$ finitely generated?
\end{question}

We believe that the construction of a counterexample to the Milnor conjecture in dimension $4$ or $5$, if it exists, would most likely require the development of a different strategy with respect to the one employed in the present paper and in our previous \cite{BrueNaberSemolaMilnor1}. \\

Our constructions are necessarily nonnegative Ricci in nature, however note by Theorem \ref{t:milnor6d}.1 we have that the current example has polynomially decaying Ricci curvature.  This leads to the question:

\begin{question}
	Let $(M^n,g)$ be complete with $\Ric\equiv 0$. Is $\pi_1(M)$ finitely generated?
\end{question}

To the best of our knowledge, the validity of the Milnor conjecture is open also in the K\"ahler case:

\begin{question}
Let $(M^{2n},g,J)$ be a complete K\"ahler manifold with $\Ric\ge 0$. Is $\pi_1(M)$ finitely generated?
\end{question}

As we already remarked in \cite{BrueNaberSemolaMilnor1}, the universal covers of the counterexamples to the Milnor conjecture that we can construct have less than Euclidean volume growth.  The context where $\tilde M$ is noncollapsed is still open:

\begin{question}\label{q:AVRMilnor}
Let $(M,g)$ be a complete manifold with $\Ric\ge 0$ such that the universal cover $(\tilde{M},\tilde{g})$ satisfies the noncollapsing condition $\Vol(B_r(\tilde p))>v r^n$ for all $r>0$. Is $\pi_1(M)$ finitely generated?
\end{question}

We address the reader to \cite{Pan}, \cite{PanRong}, and \cite{Panb} for some recent interesting partial positive results about Question \ref{q:AVRMilnor}.\\

A theorem of Wei \cite{Wei} says that any finitely generated torsion-free nilpotent group is the fundamental group of some complete $(M^n,g)$ with $\Ric\ge 0$. 
It is an open question whether infinite generation of fundamental groups for manifolds with $\Ric\ge 0$ is a purely abelian phenomenon. In particular, we have the following:

\begin{question}
Let $(N,\cdot)$ be a simply connected nilpotent Lie group, which we view as a matrix group $N<\mathrm{GL}(m,\dR)$ for some $m\in\dN$. Does there exist a complete Riemannian manifold $(M^n,g)$ with $\Ric\ge 0$ and such that $\pi_1(M)=N\cap \mathrm{GL}(m,\dQ)$?
\end{question}

The remainder of this paper is organized as follows: \\

In Section \ref{s:outline_milnor:inductive} we describe the inductive construction for the counterexamples. The strategy is completely analogous to the one introduced in our previous paper \cite{BrueNaberSemolaMilnor1}, and it is based on three main steps. Given the existence of the equivariant family of positively Ricci curved metrics with the properties discussed above, in particular see Theorem \ref{t:equivariant_mapping_class_S3xS2}, the three main inductive propositions can be proved arguing as for the corresponding statements in \cite{BrueNaberSemolaMilnor1}.  Hence their proofs will be omitted.\\

In Section \ref{sec:univdiffeo} we prove that the counterexamples can be constructed so that their universal covers are diffeomorphic to $S^3\times\dR^3$. The proof will be based on the construction of a Morse-Bott function and it requires a careful analysis of the gluing diffeomorphisms in the inductive construction.\\

In Section \ref{sec:furtherproperties} we discuss the curvature decay and the volume growth of the examples.  Section \ref{s:preliminaries} records some useful preliminary material.
Section \ref{sec:equivtwist} is dedicated to the proof of Theorem \ref{t:equivariant_mapping_class_S3xS2}.

\section*{Acknowledgments}
The first author would like to express gratitude for the financial support received from Bocconi University.\\
The third author is supported by FIM-ETH with a Hermann-Weyl Instructorship. He is grateful to Vitali Kapovitch and to the members of the Geometry group of the University of M\"unster for some helpful conversations about the subject of the paper.
\\
The authors are grateful to the anonymous referees for several useful suggestions.

\vspace{.5cm}

\section{Inductive Construction for Theorem \ref{t:milnor6d}}\label{s:outline_milnor:inductive}

The inductive construction in the proof of Theorem \ref{t:milnor6d} is essentially borrowed from our previous paper \cite{BrueNaberSemolaMilnor1}.  We will stress those points where the construction is either different or more refined, as happens at several points in order to address the geometric and topological properties of our example.  We outline the construction in this Section for ease of readability and in order to record some notation that will be helpful in the future sections.

\subsubsection{\bf Decomposing the group $\Gamma\leq \dQ/\dZ$}\label{ss:outline_milnor_construction:decomposition} Let us begin by choosing in $\Gamma\leq \dQ/\dZ\subseteq S^1$ a nested sequence of
finitely generated subgroups $\{e\}=\Gamma_{-1}\leq \Gamma_0\leq \Gamma_1\leq \cdots$ which generate $\Gamma$ in the sense that for every $\gamma\in \Gamma$ we have that $\gamma\in \Gamma_j$ for some $j$ sufficiently large.  For instance such a sequence of subgroups may be built using that $\Gamma$ is countable and choosing an enumeration.  A finitely generated subgroup $\Gamma_j\leq \dQ/\dZ$ is necessarily finite and generated by a single element $\gamma_j\in \Gamma_j$.  In this way we can write 
\begin{align}\label{e:outline_milnor:Gamma}
	\Gamma_j = \big\langle \gamma_j, \Gamma_{j-1}\big\rangle \text{ and $\exists!$ minimal $k_j\in \dN$ such that } \gamma_j^{k_j}=\gamma_{j-1}\, .
\end{align}
It will be convenient to adopt the notation $k_{\le j}\equiv k_0\cdot k_1\cdots\cdot k_j$, for $j\in \mathbb{N}$ and we shall denote by $|\gamma|$ the order of any $\gamma\in\Gamma$.  Notice that, with this notation, $|\gamma_j|=k_{\le j}$.  There is no harm in assuming that $k_j>1$ for each $j$, as otherwise $\Gamma_{j}=\Gamma_{j-1}$. \\

The only additional requirement that we make, with respect to the analogous construction in \cite{BrueNaberSemolaMilnor1}, is that we will assume throughout that $k_0$ is even. It is clear that one can find a decomposition of $\Gamma=\dQ/\dZ$ as above compatible with this restriction. Moreover, it is also obvious that if there exists a complete $(M^6,g)$ with $\Ric\ge 0$ and $\pi_1(M)=\dQ/\dZ$, then there exists also a complete $(N^6,g)$ with $\Ric\ge 0$ and $\pi_1(N)=\Gamma$, for any $\Gamma<\dQ/\dZ$.\\

For each $\gamma\in \Gamma$ we can then uniquely write it as

\begin{align}
	\gamma = \prod_{j} \gamma_j^{a_j}\, , \text{ such that }a_j<k_j\, ,
\end{align}
where at most a finite number of $a_j$ are nonvanishing.  Note that there is the short exact sequence $0\to \Gamma_j\to \Gamma\to \Gamma/\Gamma_j\to 0$.  This does not split as a group splitting of course, however the choice of basis builds for us a splitting of sets
\begin{align}\label{e:Gamma_splitting}
	&\Gamma = \Gamma_j\oplus \Gamma/\Gamma_j\, ,\text{ given by }\notag\\
	&\gamma = \gamma_{\leq j}\cdot\gamma_{>j} = \prod_{i\leq j} \gamma_i^{a_i}\cdot \prod_{i>j} \gamma_i^{a_i}\, .
\end{align}

\begin{remark}
It is possible, and helpful, to include into the discussion the case where $\Gamma$ is finitely generated, or equivalently $\Gamma=\Gamma_j$ for some $j$.  This is more in line with how our inductive construction in Section \ref{s:outline_milnor:inductive} will proceed.  
\end{remark}

Before moving on with the construction, let us introduce some helpful notation. Given $(a,b)\in \dZ\times \dZ$ we endow $S^3 \times S^2$ with the (left) $(a,b)$-action
\begin{equation}
	\theta\cdot _{(a,b)}(g,s):= (e^{ia\theta} \cdot g, e^{ib\theta}\cdot  s)\, , \quad \theta \in S^1 \, ,
\end{equation}
where $e^{i\theta }\cdot g$ indicates the left Hopf rotation in $S^3$, and $e^{ib\theta} \cdot s := (e^{ib\theta}z,t)$, where we identify $s = (z,t)\in S^2 \subset \mathbb{C} \times \dR$.\\

\subsection{The inductive construction}\label{ss:inductiveconstruction}

The proof of Theorem \ref{t:milnor6d} will be set up in an inductive fashion, where we will build a sequence of pointed manifolds $(M_j,p_j,\Gamma_j)$ with $\Ric_j\geq 0$ together with free uniformly discrete isometric actions by $\Gamma_j$.  This Section will begin with a description of the main properties of our inductive sequence $M_j$, together with how one proves Theorem \ref{t:milnor6d} once this sequence has been constructed.  The induction criteria will be such that building $(\tilde M,\Gamma)$ from the inductive sequence will be relatively straightforward.\\  

The remainder of this Section will then focus on proving the induction, namely on how to construct $M_{j+1}$ from $M_j$ in order to complete the induction proof.  The construction will boil down to three major steps, and in each step we will state one of our three main inductive Propositions. The proof of Theorem \ref{t:milnor6d} will be complete in this Section modulo these main Propositions.\\

The main inductive Propositions will correspond to the analogous statements in our previous paper, see \cite[Section 3]{BrueNaberSemolaMilnor1}. Two of them, namely the construction of the model space in Proposition \ref{p:step1}, and the gluing construction with action extension in Proposition \ref{p:step3}, can be proved with very minor changes with respect to the proofs of the corresponding statements from \cite{BrueNaberSemolaMilnor1}. Hence their proofs will be omitted.\\ 
Also the remaining inductive Proposition \ref{p:step2} can be proved with very minor changes with respect to the corresponding statement from \cite{BrueNaberSemolaMilnor1}, once the twisting Theorem \ref{t:equivariant_mapping_class_S3xS2} has been proved. The proof of Proposition \ref{p:step2} will be omitted accordingly. On the other hand, the proof of Theorem \ref{t:equivariant_mapping_class_S3xS2}, which constitutes the main novelty of the present paper, is deferred to Section \ref{sec:equivtwist} below.

The proof of the induction given the main Propositions is completely analogous to the one in \cite{BrueNaberSemolaMilnor1}. However, we report it in this Section for the ease of readability.\\

Recall that we have chosen as in \eqref{e:outline_milnor:Gamma} a sequence of finitely generated subgroups $\Gamma_j\leq \Gamma$ with $\Gamma_j = \langle \gamma_j,\Gamma_{j-1}\rangle$ which are all generated by a single element $\gamma_j$ such that $\gamma_j^{k_j}=\gamma_{j-1}$.

Our geometric construction will be based on a sequence of parameters $\epsilon_j\to 0$ and $\delta_j\to 0$.  We may begin by choosing any sequence $\epsilon_j\to 0$.  Indeed any sequence of constants $\epsilon_j<1$ will do, but in our description of the tangent cones at infinity of $\tilde M$ later in Section \ref{ss:tangentcones} we will use that these constants tend to zero, which gives a slightly cleaner picture. Let $\delta_1\ll1$ also be any small constant, the remaining $\delta_j$ will be chosen based on applications of our Inductive Propositions. We shall adopt the notation $A_{s_1,s_2}(p)$ to denote the annulus $B_{s_2}(p)\setminus B_{s_1}(p)$ for any $0\le s_1<s_2\le \infty$.

Our sequence $(M_j,p_j,\Gamma_j)$ will inductively be assumed to satisfy:

\begin{enumerate}
	\item[(I1)] There exists a free isometric action by $\Gamma_j$ on $M_j$ with $r_j\equiv d(p_j,\gamma_j\cdot p_j)$ and $\frac{r_j}{k_{j-1} r_{j-1}} >> 1 $.
	\item[(I2)] There exists an isometry $\Phi_j:U_j\subseteq M_j\to M_{j+1}$ with $B_{10 k_j r_j}(p_j)\subseteq U_j\subseteq B_{10^3 k_j r_j}(p_j)$ with $\Phi_j(p_j)=p_{j+1}$, where $U_j$ is $\Gamma_j$ invariant with $\Phi_j(\gamma\cdot x)=\gamma\cdot \Phi_j(x)$ for all $\gamma\in \Gamma_j\leq \Gamma_{j+1}$ .
	\item[(I3)] $M_j\setminus U_j$ is isometric to $S^3_{\delta_j r_j}\times A_{10^{2}k_jr_j,\infty}(0)\subseteq S^3_{\delta_j r_j}\times C(S^2_{1-\epsilon_j})$ \footnote{Observe that this is isometrically very close to $S^3\times \dR^3$.  Indeed, in our setup $U_j$ itself is very Gromov-Hausdorff close to $S^3\times \dR^3$.}.  The action of $\gamma_j$ in this domain rotates the cross section $S^2_{1-\epsilon_j}$ of the cone factor by $2\pi/k_j$ and Hopf rotates the $S^3_{\delta_j r_j}$ factor by $2\pi/|\gamma_j|=2\pi/(k_0k_1\cdots k_j)$.
\end{enumerate} 
\vspace{.2cm}
\begin{remark}
	It follows from (I3) that the orbit of the base-point with respect to the action of $\Gamma_j$ has diameter roughly $k_j r_j$.
\end{remark}
\begin{remark}
	It will be clear from the construction that $\frac{r_{j+1}}{k_{j} r_{j}} \to \infty$.  That is, the scale of the action of the next generator $\gamma_{j+1}$ relative to the orbit of the previous generator $\gamma_j$ is tending to infinity.
\end{remark}

Before discussing more carefully the structure of the spaces $M_j$ above, let us quickly see that if such an inductive sequence as above can be built, then we are done.  Indeed, consider first the $\Gamma_i$-equivariant isometries $\Phi_{ji}=\Phi_j\circ\cdots\circ\Phi_i:U_i\to U_{ji}\equiv \Phi_{ji}(U_i)\subseteq M_j$.  We can take an abstract equivariant pointed Gromov-Hausdorff limit of the sequence $(M_j,p_j,\Gamma_j)$. However the setup is such that we can also simply define the direct limit
\begin{align}
	\tilde M\equiv\big\{ (x_j,x_{j+1},\ldots): x_{k+1}=\Phi_k(x_k) \text{ for all }k\geq j\big\}/\sim\, ,
\end{align}
where there is an equivalence relation $(x_{j},x_{j+1},\ldots)\sim (y_{j'},y_{j'+1},\ldots)$ if there exists $k\geq \max\{j,j'\}$ such that $x_k=y_k$.   By the equivariance of the isometries $\Phi_{i}$ we have that $\Gamma_j$ naturally acts on all sequences $(x_k,x_{k+1},\ldots)$ with $k\geq j$.  In particular there is an induced action of $\Gamma$ on $\tilde M$.  Note that $U_j\subseteq M_j$ all embed isometrically into $\tilde M$ and exhaust $\tilde M$, and the restriction of the $\Gamma_j$ action to $U_j\subseteq \tilde M$ is the expected action.   Thus $\tilde M$ is a smooth, simply connected Riemannian manifold with $\Ric\geq 0$ and a free discrete isometric action by $\Gamma$, as claimed.

\vspace{.2cm}
\subsection{The Steps of the Inductive Construction}

We will break down this inductive construction into three steps.  Each will involve a Proposition which will form the main constructive ingredient in the step.  Our goal in this subsection is then to discuss these steps and state the Propositions.  We will then see how to finish the induction given these results.\\  

The first step will build our background model space $\cB(\epsilon,\delta)\approx S^3\times \dR^3$.  It will form the basis of both our base step of the induction, and also the underlying space for which previous induction manifolds $M_j$ will be glued into in order to form $M_{j+1}$.\\  

Each $M_j$ looks like $S^3\times \dR^3$ at infinity with the action $\Gamma_j$ induced by the $(1,k_{\le j-1})$ action.  The first step in building $M_{j+1}$ is to equivariantly twist $M_j$ to a new manifold $\hat M_j$, so that after our twisting $\hat M_j$ again looks like $S^3\times \dR^3$ at infinity but this time the $\Gamma_j$ action is induced by the $(1,0)$-action. This will be the goal of Step 2.\\

The third step of the inductive construction is to take our twisted $\hat M_j$ and glue in $k_{j+1}$ copies into a new base manifold $\cB_{j+1}$.  The gluing is such that we have now extended the $\Gamma_j$ action on $\hat M_j$ to a $\Gamma_{j+1}=\langle \gamma_{j+1},\Gamma_j\rangle$ action on $M_{j+1}$ in the appropriate fashion.

\vspace{.25cm}

\subsubsection{\bf Step 1: The Background Model Space $\cB(\epsilon,\delta)$}

Our construction will begin by building a background manifold $\cB(\epsilon,\delta)$.  The space will both play the role of base step in the inductive construction and additionally when we move from $M_j$ to $M_{j+1}$ the basis for our construction will be to glue in $k_{j+1}$ copies of $M_j$ into the background space $\cB_{j+1}$.  \\

The construction of $\cB(\epsilon,\delta)$ is relatively straightforward, we will simply take $S^3\times \dR^3$ and slightly curve the $\dR^3$ factor in order to give it a slight cone angle.  The precise setup is the following:

\begin{proposition}[Step 1: The Model Space]\label{p:step1}
	For each $\delta>0$ and $1>\epsilon>0$ , there exists a smooth manifold $\cB^6=\cB(\epsilon,\delta)$ such that the following hold:
	\begin{enumerate}
		\item $(\cB^6,g_B,p)$ is a complete Riemannian manifold with $\Ric\geq 0$, and it is diffeomorphic to $S^3\times \dR^3$.
		\item There exists $B_{10^{-3}}(p)\subseteq U\subseteq B_{10^{-1}}(p)$ such that $\cB\setminus U$ is isometric to $S^3_{\delta}\times A_{10^{-2},\infty}(0)\subseteq S^3_{\delta}\times C(S^2_{1-\epsilon})$ 
		\item There is an isometric $T^2=S^1\times S^1$ action on $\cB$ for which on $\cB\setminus U\approx S^3_{\delta}\times C(S^2_{1-\epsilon})$ the first $S^1$ acts on the $S^3_{\delta}$ factor by a globally free (left) Hopf rotation and the second $S^1$ acts on the cross sections $S^2_{1-\epsilon}$ of the cone factor by rotation. 
		\item The $S^1$-action induced by the homomorphic embedding $S^1\ni \theta\mapsto (a\theta,b\theta)\in T^2$ is free whenever $(a,b)\in \dZ\times \dZ$ are coprime and $a\neq 0$.
	\end{enumerate}
\end{proposition}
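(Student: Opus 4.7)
The plan is to build $\cB^6$ as a Riemannian product $\cB=S^3_\delta\times(\dR^3,g_\epsilon)$, where $g_\epsilon$ is a smooth $SO(3)$-invariant metric on $\dR^3$ that coincides with the flat metric near the origin and with the cone $C(S^2_{1-\epsilon})$ outside a compact set.

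First I would construct the radial profile. In polar coordinates on $\dR^3$ set $g_\epsilon=dr^2+\phi(r)^2\, g_{S^2}$ and choose $\phi:[0,\infty)\to[0,\infty)$ smooth with $\phi(0)=0$, $\phi'(0)=1$ (with odd Taylor expansion at $0$, so the metric extends smoothly through the origin), $\phi''\le 0$ throughout, $\phi(r)=r$ for small $r$, and $\phi(r)=(1-\epsilon)(r-c_\epsilon)$ for $r\ge 10^{-2}$, the constant $c_\epsilon$ being fixed by endpoint matching. Such $\phi$ is built by smoothly decreasing $\phi'$ from $1$ down to $1-\epsilon$ on a short transition interval and integrating. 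The warped-product Ricci formulas for a three-dimensional $dr^2+\phi(r)^2 g_{S^2}$ read
\begin{align*}
\Ric(\partial_r,\partial_r)=-\tfrac{2\phi''}{\phi}\, ,\qquad \Ric(X,X)=\tfrac{1-(\phi')^2-\phi\phi''}{\phi^2}\, |X|^2_{g_\epsilon}\ \text{ for }X\perp\partial_r\, ,
\end{align*}
so $\Ric_{g_\epsilon}\ge 0$ follows immediately from $\phi''\le 0$ and $0\le\phi'\le 1$.

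Taking the product with $S^3_\delta$ preserves $\Ric\ge 0$ and the topology $S^3\times\dR^3$, giving (1). For (2), $U$ can be chosen as $S^3_\delta$ times a ball of radius $10^{-2}$ about the origin in $(\dR^3,g_\epsilon)$ (shifting radii by $c_\epsilon$), so that $\cB\setminus U$ is isometric to $S^3_\delta\times A_{10^{-2},\infty}(0)\subset S^3_\delta\times C(S^2_{1-\epsilon})$ on the nose. For (3), the first $S^1$ acts on $S^3_\delta\subset\dC^2$ by diagonal Hopf multiplication $e^{i\theta}\cdot(z_1,z_2)=(e^{i\theta}z_1,e^{i\theta}z_2)$, which is globally free and isometric, and the second $S^1$ acts on $(\dR^3,g_\epsilon)$ by rotation about a fixed axis through the origin, using that the warped metric has full $SO(3)$-symmetry on the angular factor. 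The two actions commute and are both isometric, producing the desired $T^2$-action.

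Finally, for (4), the induced $S^1$-action sends $(p_3,x)\mapsto(e^{ia\theta}\cdot p_3,R_{b\theta}x)$. A stabilizing $\theta$ forces $e^{ia\theta}\cdot p_3=p_3$, whence $e^{ia\theta}=1$ by freeness of the Hopf action, i.e. $a\theta\in 2\pi\dZ$; combining with the constraint coming from the $R_{b\theta}$-factor and the coprimality of $(a,b)$ then yields $\theta\in 2\pi\dZ$. The main obstacle is essentially a bookkeeping one: producing $\phi$ with the exact matching on the cone region while preserving concavity and $\phi'\le 1$. No deeper geometric input is required beyond this standard rounded-cone-crossed-with-round-sphere construction.
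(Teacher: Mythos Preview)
Your construction and overall approach are exactly what the paper intends (a smoothed $SO(3)$-invariant cone metric on $\dR^3$ crossed with a round $S^3_\delta$; the paper omits the proof and refers to the analogous argument in \cite{BrueNaberSemolaMilnor1}). Items (1)--(3) go through as you describe, modulo routine bookkeeping.

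There is, however, a genuine gap in your verification of (4). Your argument says: from $e^{ia\theta}\cdot p_3=p_3$ one gets $a\theta\in 2\pi\dZ$, and then you appeal to ``the constraint coming from the $R_{b\theta}$-factor'' together with $\gcd(a,b)=1$ to force $\theta\in 2\pi\dZ$. But if $x$ lies on the rotation axis of $\dR^3$ then $R_{b\theta}x=x$ for \emph{every} $\theta$, so there is no second constraint at all. At such a point the stabilizer of the $(a,b)$-action is $\{\theta\in S^1: a\theta\in 2\pi\dZ\}\cong\dZ_{|a|}$, which is nontrivial whenever $|a|>1$. Thus with your product construction the $(a,b)$-action is simply not free for $|a|>1$; and since the poles of each cross-section $S^2_{1-\epsilon}$ already lie in $\cB\setminus U$, no alternative filling of $U$ can repair this. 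In the paper the only actions ever invoked on $\cB$ are $(1,k)$-actions, and for $a=1$ your argument is complete: freeness of the Hopf rotation alone gives $\theta\in 2\pi\dZ$. So the gap is harmless for the applications, but the argument as written does not establish (4) in the generality stated, and a bare product construction cannot.
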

\begin{remark}
	Thus for each $(a,b)\in \dZ\times \dZ$ we have the induced $(a,b)$-$S^1$ action given by the homomorphic embedding $S^1\ni \theta\mapsto (a\theta,b\theta)\in T^2$.
\end{remark}

The proof of Proposition \ref{p:step1} is completely analogous to the proof of the corresponding Proposition in our previous paper, see \cite[Section 5]{BrueNaberSemolaMilnor1}. Hence it will be omitted.\\

{\bf Base Step: } Let us then define the base step of our induction as $M_1=\cB(\epsilon_1,\delta_1)$ as above.  We will equip $M_1$ with the isometric group action of $\Gamma_1\leq S^1$, which is induced by the $(1,k_{0})$-action as above.  In particular, on $S^3_{\delta_1}\times S^2_{1-\epsilon_1}$ we have that the generator $\gamma_1$ will act by Hopf rotating $S^3_{\delta_1}$ by $2\pi/|\gamma_1| = 2\pi/(k_1k_0)$ and by rotating the cross section of $C(S^2_{1-\epsilon_1})$ by $2\pi/k_1=2\pi k_0/|\gamma_1|$.\\

\vspace{.3cm}

\subsubsection{\bf Step 2:  Twisting the Geometry of $M_j$ at Infinity}\label{ss_Step2description}

By condition (I3) of the induction we know that outside some compact set $U_j$ our space $M_j\setminus U_j$ is isometric to $S^3_{\delta_j r_j}\times A_{10^{2}k_jr_j,\infty}(0)\subseteq S^3_{\delta_j r_j}\times C(S^2_{1-\epsilon_j})\approx S^3\times \dR^3$.  Further, we understand that in this region the action of the generator $\gamma_j\in \Gamma_j$ looks primarily like a rotation of the $\dR^3$ factor.  More precisely, it rotates the $\dR^3$ factor by $2\pi/k_j$ and it Hopf rotates the $S^3_{\delta_j r_j}$ factor by the much smaller $2\pi/|\gamma_j| = 2\pi/(k_0\cdots k_j)$. \\

In Step 3 we will be gluing $k_{j+1}$ copies of $M_j$ into a model space $\cB_{j+1}$, and in the gluing region we will again have that $\cB_{j+1}\approx S^3\times \dR^3$.  However, the action of $\Gamma_j$ on $\cB_{j+1}$ will look like a rotation of just the $S^3$ factor without any rotational bit on the $\dR^3$ factor.  Thus to accomplish the gluing we will need to modify $M_j$ at infinity into a new space $\hat M_j$, which will again look close to $S^3\times \dR^3$ but for which the action of $\gamma_j$ is now purely a rotation of the $S^3$ factor.\\

In order to address this problem let us first consider $S^3\times S^2$ with the standard product metric $g_{S^3\times S^2}$, and let us recall that if $(a,b)\in \dZ\times \dZ$ then we have the $S^1$-isometric action $\cdot_{(a,b)}: S^1\times S^3\times S^2\to S^3\times S^2$ which acts by $a$ times the (left) Hopf rotation on the $S^3$ factor and $b$ times the rotation with respect to a fixed axis on the $S^2$ factor.  The following will describe how the cross sections of our new space $\hat M_j$ will be twisted.  It will be proved in Section \ref{sec:equivtwist}:\\

\begin{theorem}\label{t:equivariant_mapping_class_S3xS2}
	Let $g_0=g_{S^3\times S^2}$ be the standard product metric on $S^3\times S^2$, and let $k\in \dZ$ be even. Then there exists a diffeomorphism $\phi:S^3\times S^2\to S^3\times S^2$ and a family of metrics $(S^3\times S^2,g_t)$ such that
	\begin{enumerate}
		\item $\Ric_t>0$ for all $t\in [0,1]$
		\item The $S^1$-action $\cdot_{(1,k)}$ on $S^3\times S^2$ is an isometric action for all $g_t$.
		\item $g_1 = \phi^*g_0$ with $\phi\big(\theta\cdot_{(1,k)}(s_1,s_2)\big)= \theta\cdot_{(1,0)}\phi(s_1,s_2)$ for any $s_1\in S^3$ and $s_2\in S^2$.
	\end{enumerate}
\end{theorem}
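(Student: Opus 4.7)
The plan is to view $(S^3\times S^2, g_0)$ as the total space of the principal $S^1$-bundle $\pi_k\colon S^3\times S^2\to N_k$ defined by the free $(1,k)$-action, and to construct both the diffeomorphism $\phi$ and the family $g_t$ within the class of $S^1_{(1,k)}$-invariant metrics. The base $N_k$ is a smooth $S^2$-bundle over $S^2$, and a direct characteristic-class computation (carried out in the forthcoming Lemma \ref{lemma:diffeostruct}) shows that $N_k$ is diffeomorphic to $N_0 = S^2\times S^2$ precisely when $k$ is even, and moreover that in this case the principal bundles $\pi_k$ and $\pi_0$ are isomorphic. Any such bundle isomorphism lifts to an $S^1$-equivariant diffeomorphism $\phi$ satisfying condition $(3)$. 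This reduces the theorem to producing a family of $S^1_{(1,k)}$-invariant Riemannian metrics with $\Ric>0$ joining $g_0$ to $\phi^*g_0$ on the fixed smooth manifold $S^3\times S^2$.

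I would then parametrize every $S^1_{(1,k)}$-invariant metric by the Kaluza--Klein ansatz
\begin{equation}
g = \pi_k^* h + f^2\,\omega\otimes \omega,
\end{equation}
where $h$ is a metric on $N_k$, $f>0$ is the fiber length viewed as a function on $N_k$, and $\omega$ is a principal connection $1$-form on $\pi_k$ with curvature $\Omega = d\omega$. The Ricci tensor of $g$ can be computed via the O'Neill--Besse formulae in terms of $\Ric_h$, $\Hess_h f$, $df$, and the curvature endomorphism associated to $\Omega$. The initial triple $(h_0,f_0,\omega_0)$ is read off from $g_0$ (here $f_0$ is nonconstant on $N_k$ when $k\neq 0$), and the final triple $(h_1,f_1,\omega_1)$ from $\phi^*g_0$ (for which $f_1\equiv 1$, since $\phi$ conjugates the $(1,k)$-action to the free Hopf action on the $S^3$-factor of $g_0$). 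I then seek a smooth path of triples keeping $\Ric(g_t)>0$.

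The main obstacle, explicitly signaled in the introduction, is that one cannot prescribe $\omega_t$ to be a family of Yang--Mills connections along the interpolation. In the $S^3\times S^3$ setting of \cite{BrueNaberSemolaMilnor1} the Yang--Mills condition killed the off-diagonal Ricci contributions and decoupled the submersion analysis; here the cross-terms coming from $\Omega_t$ survive and must be dominated by the remaining positive contributions along the entire deformation. My expectation is that the delicate step is precisely to choose $\omega_t$ together with the warping $f_t$ in a coordinated way so that the non-harmonic part of $\Omega_t$ is pointwise small relative to the positive contributions from $\Hess_{h_t} f_t$ and $\Ric_{h_t}$ along the base.

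Concretely, I would organize the deformation in three stages. First, deform $g_0$ by a Cheeger-type shrinking of the fibers by a small parameter $\varepsilon$, which preserves $\Ric>0$ and makes the vertical--vertical Ricci contribution large and dominant. Second, keeping the fiber scale at $\varepsilon$, interpolate the triple $(h_t,\omega_t,f_t)$ between the two endpoints; the factor $f^2$ in front of $|\Omega|^2$ in the horizontal Ricci term ensures that the non-Yang--Mills cross-terms are absorbed by the large vertical contribution. Third, expand the fibers back up to the profile prescribed by $\phi^*g_0$. The principal obstacle is the middle stage, where the pointwise balancing of $|\Omega_t|$ against $\Hess_{h_t} f_t$ and $\Ric_{h_t}$ alluded to in Section \ref{subsec:outtwisting} must be carried out; this is the new geometric input of the paper.
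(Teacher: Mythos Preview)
Your framework is correct and matches the paper's setup: viewing $S^3\times S^2$ as the total space of the $S^1_{(1,k)}$-principal bundle over $N_k$, using the Kaluza--Klein parametrization $(h,\omega,f)$, and invoking Lemma \ref{lemma:diffeostruct} to identify $N_k\cong S^2\times S^2$ for even $k$. You have also correctly identified the central difficulty: the connection cannot be kept Yang--Mills and the fiber length $f$ is nonconstant, so the cross-terms and the $-\Hess f/f$ terms in Proposition \ref{prop:RicS1bundle} are genuinely dangerous.

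However, your three-stage strategy contains a concrete error. Shrinking the $S^1$ fibers does \emph{not} make the vertical Ricci ``large and dominant''; it does the opposite. With the initial warping $f_0=\sqrt{1+k^2\sin^2\psi}$ one computes (cf.\ \eqref{eq:laplfalphak} with $\alpha=1$) that $-\Delta f_0/f_0=-2k^2$ at $\psi=0$. Since $\Ric(U,U)=-\Delta f/f+\tfrac{f^2}{2}|\omega|^2$, a uniform rescaling $f\mapsto\varepsilon f$ leaves the first term unchanged and kills the second, driving the vertical Ricci to $-2k^2<0$ near the poles. The genuine Cheeger deformation (quotienting $M\times S^1_{1/\sqrt{\lambda}}$ by the diagonal action) behaves better, but even there the vertical Ricci tends to zero rather than becoming large, and the scale-invariance of the determinant test $\Ric(U,U)\Ric(X,X)>\Ric(U,X)^2$ shows that shrinking the $S^1$ orbits gives you no leverage against the $\text{div}\,\omega$ cross-term.

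The paper's actual mechanism is different in two essential ways. First, the preliminary shrinking is of the $S^2$ \emph{factor} of $S^3\times S^2$ (the parameter $\alpha$ in Section \ref{subsec:squishS2}), not of the $S^1$ orbits. This keeps $g^\alpha$ a product metric with $\Ric>0$ trivially, and more importantly it forces $\Ric_{N^\alpha}\sim 1/\alpha^2$ in the $F_1,F_2$ directions (Lemma \ref{lemma:RicNhalpha}), which is what ultimately dominates the bad terms. Second, during the critical interpolation (Proposition \ref{prop:g0alphatog1alpha}) the base metric $h^\alpha$ is held \emph{fixed} while the warping and the connection are deformed \emph{simultaneously} along the specific coupled path $f^\alpha((1-\beta)k)$, $\eta^\alpha_\beta=(1-\beta)\eta^\alpha_0+\beta\eta_1$; the paper explicitly notes that separating these two deformations destroys positivity. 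The determinant on $\mathrm{span}\{U_\beta,F_2^{\alpha,\beta}\}$ is then checked by hand, with the $1/\alpha^2$ in $\Ric(F_2,F_2)$ beating the $1/\alpha^2$ in the cross-term. Only after the $S^1$-bundle data has been standardized to $(f\equiv 1,\eta_1)$ does the paper deform $h$ on $N$, in two further explicit stages (rounding the $S^2$ fibers, then flattening the $N\to S^2$ connection). Your proposal to interpolate all of $(h_t,\omega_t,f_t)$ at once in the middle stage misses this decoupling, and your ``small $\varepsilon$ absorbs everything'' heuristic targets the wrong parameter.
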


\begin{remark}\label{remark:isotopic_class}
	The diffeomorphism $\phi:S^3\times S^2\to S^3\times S^2$ in Theorem \ref{t:equivariant_mapping_class_S3xS2} above can be taken isotopic to a diffeomorphism $\psi: S^3\times S^2 \to S^3\times S^2$ with the following special structure
	\begin{equation}
		(s_1,s_2) \mapsto (s_1, \psi_{s_1}(s_2)) \, , \quad \psi_{s_1}\in O(3)\, , \, \, \forall\, s_1\in S^3\, .
	\end{equation}
 We refer the reader to Section \ref{subsection:isotopy class} for details and remark here that any such diffeomorphism can be trivially extended to a diffeomorphism $\overline{\psi}:S^3\times D^3\to S^3\times D^3$.
\end{remark}

\vspace{.2cm}

The above tells us that we can find an $S^1$-invariant family of metrics with positive Ricci curvature which (from an isometric point of view) start and end at the classical $S^3\times S^2$, however, the beginning and ending $S^1$ actions are quite distinct.  Our main use of the above will be to build the following neck region, which will be used to alter $M_j$ to $\hat M_j$:

\begin{proposition}[Step 2: Twisting the Action]\label{p:step2}
	Let $\epsilon,\hat \epsilon,\delta>0$ with $k\in \dZ$ even.  Then there exist $\hat \delta(\epsilon,\hat \epsilon,\delta,k)>0$ and $R(\epsilon,\hat \epsilon,\delta,k)>1$ and a metric space $X$ with an isometric and free $S^1$ action such that
	\begin{enumerate}
		\item $X$ is smooth away from a single three sphere $S^3_\delta\times \{p\}\subset X$ with $\Ric _X\geq 0$.
		\item There exists $B_{10^{-3}}(p)\subseteq U\subseteq B_{10^{-1}}(p)\subseteq X$ which is isometric to $S^3_\delta\times B_{10^{-2}}(0)\subseteq S^3_\delta\times C(S^2_{1-\epsilon})$ , and under this isometry the $S^1$ action on $U$ identifies with the $(1,k)$ action. 
		\item There exists $B_{10^{-1}R}(p)\subseteq \hat U\subseteq B_{10R}(p)\subseteq X$ s.t. $X\setminus \hat U$ is isometric to $S^3_{\hat \delta R}\times A_{R,\infty}(0)\subseteq S^3_{\hat \delta R}\times C(S^2_{1-\hat \epsilon})$, and under this isometry the $S^1$ action on $X\setminus \hat U$ identifies with the $(1,0)$ action. 
	\end{enumerate}
\end{proposition}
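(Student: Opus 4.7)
The proof follows the structure of the analogous construction in \cite{BrueNaberSemolaMilnor1} essentially verbatim, with the family of Theorem \ref{t:equivariant_mapping_class_S3xS2} replacing the five-dimensional $(S^3\times S^3, g_t)$ family used there. I would build $X$ as a radial neck parameterized by $s\in [0,\infty)$, with the singular $S^3_\delta$ at $s=0$, the inner conical model of (2) on a short initial interval, and three consecutive stages on a long interval before the outer model of (3) is attained.

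The first stage (scaling-in) is a $T^2$-invariant doubly-warped product $ds^2 + \alpha(s)^2 g_{S^3} + \beta(s)^2 g_{S^2}$, chosen with $(\alpha, \beta) = (\delta, (1-\epsilon)s)$ smoothly matching the inner cone at the interface and transitioning to $(\alpha, \beta) = (\lambda_0, \lambda_0)$, for a small chosen scale $\lambda_0$, with vanishing derivatives at the end. Invariance under $(1,k)$ is automatic from the torus action. Concave monotone profiles for $\alpha, \beta$, combined with the positive intrinsic Ricci of $S^3$ absorbing the warping contributions, yield $\Ric \ge 0$ by standard doubly-warped product formulas, exactly as in \cite{BrueNaberSemolaMilnor1}.

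The middle stage (twisting) is the genuinely novel part and relies on Theorem \ref{t:equivariant_mapping_class_S3xS2}. Take $g = ds^2 + \lambda_0^2 g_{t(s)}$ with $t(s)$ smoothly monotone interpolating from $0$ to $1$ over a long interval $[s_1, s_2]$, and with $t \equiv 0$ near $s_1$ and $t \equiv 1$ near $s_2$ so that all $s$-derivatives of $t$ vanish at the endpoints, ensuring smooth matching with the adjacent stages. The $(1,k)$-invariance is preserved by Theorem \ref{t:equivariant_mapping_class_S3xS2}(2). By compactness of $[0,1]$ together with strict positivity in Theorem \ref{t:equivariant_mapping_class_S3xS2}(1), there exists $c_0>0$ with $\Ric_{g_t} \ge c_0 g_t$ uniformly on $S^3 \times S^2$, and $\Ric(\lambda_0^2 g_t) = \Ric(g_t)$ as a $(0,2)$-tensor. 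The full warped-product Ricci receives correction terms bounded pointwise by a fixed constant times $(t'(s))^2 + |t''(s)|$. Taking $s_2 - s_1$ large enough makes these corrections dominated by the uniform strict positivity above, yielding $\Ric_g \ge 0$ on this stage. This quantitative estimate is the main obstacle, and it essentially uses the strict positivity of the Ricci provided by Theorem \ref{t:equivariant_mapping_class_S3xS2}(1).

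At $s=s_2$ the cross-section is $\lambda_0^2 g_1 = \lambda_0^2 \phi^* g_0$. Applying the diffeomorphism $\phi$ from Theorem \ref{t:equivariant_mapping_class_S3xS2}(3) to relabel $S^3 \times S^2$ identifies this cross-section with the standard small-scale product $\lambda_0^2 g_0$ and simultaneously conjugates the $(1,k)$ action into the $(1,0)$ action. The final stage (scaling-out) is then another $T^2$-invariant doubly-warped product $ds^2 + \tilde\alpha(s)^2 g_{S^3} + \tilde\beta(s)^2 g_{S^2}$ starting at $(\lambda_0, \lambda_0)$ with vanishing derivatives and transitioning to $(\hat\delta R, (1-\hat\epsilon)s)$ for $s\ge R$; as in the first stage, concave monotone profiles give $\Ric \ge 0$. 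The parameters $\hat\delta$ small and $R$ large are chosen in terms of $(\epsilon, \hat\epsilon, \delta, k)$ to accommodate the lengths and scales required by the three stages. Freeness of the $S^1$-action follows from the Hopf factor on $S^3$ acting freely, and smoothness away from the singular three-sphere $S^3_\delta$ is immediate from the construction.
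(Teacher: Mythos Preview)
Your middle ``twisting'' stage contains a genuine gap. On the cylinder $g=ds^2+\lambda_0^2\,g_{t(s)}$ one computes
\[
\Ric_g(\partial_s,\partial_s)=\tfrac14\,(t')^2\,|\dot g_t|_{g_t}^2-\tfrac12\,t''\,\tr_{g_t}\dot g_t-\tfrac12\,(t')^2\,\tr_{g_t}\ddot g_t,
\]
and there is \emph{no} contribution from $\Ric_{g_t}$ in the $\partial_s$ direction. So the uniform lower bound $\Ric_{g_t}\ge c_0 g_t$ does not help here. Moreover, if one rescales the interval by $L$ via $t(s)=\tau((s-s_1)/L)$, every term above scales like $L^{-2}$; enlarging $s_2-s_1$ does not change the sign of $\Ric_g(\partial_s,\partial_s)$. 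Since $t''$ must change sign and $\tr_{g_t}\dot g_t$ need not vanish pointwise, this quantity will in general be strictly negative somewhere, and your argument as written does not control it. A secondary issue: in your stages 1 and 3 you ask for $\alpha$ concave with $\alpha'=0$ at both endpoints but $\alpha$ taking two different values $\delta$ and $\lambda_0$; that is impossible unless $\lambda_0=\delta$.

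The construction that the paper invokes from \cite[Section 7]{BrueNaberSemolaMilnor1} is more elaborate precisely to avoid this: it uses seven pieces $X_1,\ldots,X_7$, transitioning first from $S^3_\delta\times C(S^2_{1-\epsilon})$ to a genuine cone $C(S^3_{\delta'}\times S^2_{1/8})$ (regions $X_1\cup X_2\cup X_3$), performing the twist there, and then transitioning back (see the discussion in Section~\ref{sec:furtherproperties} of the present paper). The point is that one carries a nontrivially warped radial factor $f(s)$ through the twisting region, so that the strictly positive term $-n f''/f$ is available to absorb the $O(t',t'')$ corrections in the $\partial_s$ direction. Your three-stage scheme can be repaired along these lines (write $ds^2+f(s)^2 g_{t(s)}$ with $f$ strictly concave and match the adjacent pieces to cone geometries rather than cylinders), but as stated the argument does not close.
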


Given Theorem \ref{t:equivariant_mapping_class_S3xS2}, the proof of Proposition \ref{p:step2} is completely analogous to the proof of the corresponding \cite[Proposition 3.3]{BrueNaberSemolaMilnor1} given \cite[Theorem 3.2]{BrueNaberSemolaMilnor1}, see \cite[Section 7]{BrueNaberSemolaMilnor1} for the details. Hence it will be omitted.\\

{\bf Constructing $\hat M_j$:} Before moving on to Step 3, let us see how the above will be used as part of our induction process. Thus let us assume we have constructed $M_j$ as in (I1)-(I3) with sphere radius $\delta_j$.  Recall by (I3) that outside of a compact subset we have that $M_j$ is isometric to $S^3_{\delta_j r_j}\times C(S^2_{1-\epsilon_j})$, and the action of $\gamma_j$ rotates the $S^2_{1-\epsilon_j}$ cross section by $2\pi/k_j$ and Hopf rotates the $S^3_{\delta_j r_j}$ factor by $2\pi/|\gamma_j|$.  Observe that if we consider the $(1,k_{\le j-1})$-action on $S^3_{\delta_j r_j}\times C(S^2_{1-\epsilon_j})$, then $\Gamma_j\subseteq S^1$ can be viewed as a subaction.\\

Now with any $\hat\epsilon_j>0$, the precise constant will be chosen later, we have for $R_j=R_j(\epsilon_j,\hat\epsilon_j,\delta_j,k_{\le j-1})$ and $\hat\delta_j=\hat\delta_j(\epsilon_j,\hat\epsilon_j,\delta_j,k_{\le j-1})$ the existence of $X_j$ as in Proposition \ref{p:step2}, where we chose $k=k_{\le j-1}=k_0\cdot k_1\cdots k_{j-1}$ in the application of the Proposition.

  We can rescale $X_j\to r_j X_j$ by $r_j$ so that it is isometric to $S^3_{\delta_j r_j}\times C(S^2_{1-\epsilon_j})$ on a region $U$ containing $B_{r_j}(p)$, and it is isometric to $S^3_{\hat \delta_{j}R_j r_j}\times C(S^2_{1-\hat \epsilon_{j}})$ on a region $X_j\setminus \hat U$ containing the annulus $A_{R_j r_j,\infty}(p)$.\\  Further, there is a free isometric $S^1$ action on $X_j$ which looks like the $(1,k_{\le j-1})$ action on $U$ and the $(1,0)$ action on $X_j\setminus \hat U$.  In particular, by condition (2) in Proposition \ref{p:step2} and the inductive assumption (I3) there is an induced $\Gamma_j$ action on $X_j$ and an open annulus of $U\subseteq X_j$ which is equivariantly isometric to an open annulus in $M_j\setminus U_j$.\\

We can thus glue $X_j$ to $M_j$ in order to produce the space $\hat M_j$.  The space $\hat M_j$ is now isometric to $S^3_{\hat \delta_{j}R_j r_j}\times C(S^2_{1-\hat \epsilon_{j}})$ outside of some compact set $V_j\subseteq \hat M_j$, and the $\Gamma_j$ action is a pure Hopf rotation on the $S^3_{\hat \delta_{j}R_j r_j}$ factor on $\hat M_j\setminus V_j$.\\

\vspace{.3cm}

\subsubsection{\bf Step 3: Gluing Construction}\label{ss:step 3 outline}

The third step of the construction involves extending the action of $\Gamma_j$ to an action of $\Gamma_{j+1}$ in order to move from the manifold $M_j$ to the next step of the induction $M_{j+1}$.  This will occur by taking $k_{j+1}$ copies of the twisted space $\hat M_j$, constructed in the second step, and gluing them into a model space $\cB_{j+1}\approx \cB(\epsilon_{j+1},\delta_{j+1})$ constructed in the first step.\\

Recall that a model space $\cB(\epsilon,\delta)$ is isometric to an annulus in $S^3_{\delta}\times C(S^2_{1-\epsilon})$ outside of a compact set, and recall that the induction manifolds $\hat M$ are isometric to annuli in $S^3_{\delta}\times C(S^2_{1-\hat\epsilon})$ outside of a compact set.  We will therefore outline our gluing constructions purely in terms of annuli, which is where the gluing will take place.  If we can accomplish this with the correct behaviors, we can then glue our model space $\cB_{j+1}$ and inductive manifolds $\hat M_j$ directly into our glued space and finish the inductive construction of $M_{j+1}$.\\

Let us first outline the gluing strategy without worrying about smoothness or Ricci curvature.  We will end with Proposition \ref{p:step3}, which will state the end construction in a smooth Ricci preserving manner. So let $\cA' \equiv S^3_{\delta}\times C(S^2_{1-\epsilon})$ and let $\hat \cA = S^3_\delta\times B_1(0)\subseteq S^3_\delta\times C(S^2_{1-\hat\epsilon})$ with $\Gamma\leq S^1$ a finite group generated by a single element $\gamma$ whose order is divisible by $k$.  Let $\hat\Gamma$ be the group generated by $\hat\gamma\equiv\gamma^k$.  Consider the action of $\Gamma$ on $\cA'$ induced by the $(1,|\gamma|/k)$-action.  Thus $\gamma$ rotates the $S^2_{1-\epsilon}$ factor by $2\pi/k$ and Hopf rotates the $S^3_\delta$ factor by $2\pi/|\gamma|$.  Let us also consider the action of $\hat\Gamma$ on $\hat \cA$ obtained by just rotating the $S^3_\delta$ factor by $2\pi/|\hat\gamma|$ . \\

Consider $k$ copies of the annulus {$\hat\cA^a \equiv \hat\cA \times \{a\} $} with $a=0,\ldots,k-1$, and note that $\partial\hat\cA^a = S^3_\delta\times S^2_{1-\hat\epsilon}$ isometrically.  Our goal is to glue in these $k$ copies into $\cA'$ such that there is an induced $\Gamma$ action on the glued space.  We will want that $\hat\Gamma$ restricts to the usual actions on both $\cA'$ and the glued copies of $\hat \cA$.  To be more precise let $x\in C(S^2_{1-\epsilon})$ be a point whose distance from the origin is $10^2k$.  Let $x^a\in C(S^2_{1-\epsilon})$ with $a=0,\ldots, k-1$ be the $k$ points  obtained by rotating $x^0=x$ by $2\pi a/k$. \\

Consider each of the domains $S^3_\delta\times B_{1}(x^a)\subseteq \cA'$, and note that their boundaries are diffeomorphic (and nearly isometric) to $S^3_{\delta}\times S^2_{1}$.  Note that the $\hat \Gamma$ action restricts to actions on each of these domains, while the $\Gamma$ action simply restricts to an isometry between potentially different pairs of domains.  We will want to glue $\hat \cA^0, \ldots, \hat \cA^{k-1}$ into the space

\begin{align}
	\cA'\setminus \Big(\bigcup_a S^3_{\delta}\times B_{1}(x^a)\Big)\, .
\end{align}

In order to perform the gluing we need to define the gluing diffeomorphisms
\begin{align}
	\varphi^a: \partial \hat \cA^a\to S^3_{\delta}\times \partial B_{1}(x^a)\, .
\end{align} 

Recalling that $\partial \hat \cA^0=S^3_{\delta}\times S^2_{1}$ and $S^3_{\delta}\times\partial B_{1}(x)$ is nearly isometric to $S^3_\delta\times S^2_{1}$, let us first choose an almost isometry $\varphi^0:\partial \hat \cA^0\to S^3_\delta\times\partial B_{ 1}(x)$ which is the identity on the first sphere factor.  In particular, it follows that $\varphi^0$ commutes with the natural $\hat\Gamma$ actions on each of these spaces.  Let us then define $\varphi^a: \partial \hat \cA^a\to S^3_\delta\times\partial B_{1}(x^a)$ by

\begin{align}
	\varphi^a(y,a) = \gamma^a\cdot \varphi^0(y,0)\, , \quad y\in \hat \cA\, ,
\end{align}
for $a=0, \ldots, k-1$.
Note that we could naturally extend the above maps for any $a\in \dZ$.  However, we would have that $\varphi^{k}:\partial \hat \cA^0\to S^3_\delta\times\partial B_{1}(x^0)$ would not be the same mapping as $\varphi^0$.  Indeed, we see that $\varphi^{k}= \gamma^{k}\cdot \varphi^0 = \hat\gamma\cdot \varphi^0$.  To understand the implications of this consider the glued space
\begin{align}
	\tilde \cA \equiv \Big(\cA'\setminus \bigcup_a S^3_\delta\times B_{1}(x^a)\Big)\bigcup_{\varphi^a} \hat \cA^a\, ,
\end{align}
where we have plucked out the $k$ domains $S^3_\delta\times B_{1}(x^a)$ and plugged in the new annular regions $\hat \cA^a$.  The new space $\tilde \cA$ is still isometrically of the form $S^3_\delta\times C(S^2_{1-\epsilon})$ near the origin and infinity.  The effect of the gluing maps is that the $\hat\Gamma$ action on $\hat\cA$ extends to a $\Gamma$ action on $\tilde \cA$.  To understand this action, we need to describe the action of $\gamma$ on $\bigcup_{a} \hat \cA^a$. The latter is given by
\begin{equation}
	\begin{split}
	&\gamma\cdot (y,a) = (y,a+1)\, , \quad a=0, \ldots , k-2\, 
	\\
	&\gamma\cdot (y,k-1) = (\hat \gamma\cdot y,0)
	\end{split}
\end{equation}
for every $y\in \hat \cA$.  In particular, the action of $\hat\Gamma$ restricts to the expected action on each piece of the gluing.\\

The main Proposition of this step is to show that, up to some altering of constants, the above construction can be smoothed to preserve nonnegative Ricci curvature:

\begin{proposition}[Step 3:  Action Extension]\label{p:step3}
	Let $\epsilon,\epsilon',\delta>0$ with $0<\epsilon-\epsilon'\le \frac{1}{10^2 }\epsilon$, and let $\hat\Gamma\leq \dQ/\dZ\subseteq S^1$ be a finite subgroup with $\Gamma = \langle \gamma, \hat\Gamma\rangle$ such that $\hat\gamma\equiv \gamma^k$ is the generator of $\hat\Gamma$.  Then for $\hat\epsilon\leq \hat\epsilon(\epsilon,\epsilon')$ there exists a pointed space $(\tilde \cA,p)$, isometric to a smoooth Riemannian manifold with $\Ric\ge 0$ away from $k+1$ three spheres, with an isometric and free action by $\Gamma$ such that
	\begin{enumerate}
		\item There exists a $\Gamma$-invariant set $B_{10^{-1}}(p)\subseteq U'\subseteq B_{10}(p)$ which is isometric to $S^3_\delta\times B_{1}(0)\subseteq S^3_\delta\times C(S^2_{1-\epsilon'})$ and such that $\Gamma$ is induced by the $(1,|\gamma|/k)$-action on $S^3_\delta\times S^2_{1-\epsilon'}$ ,
		\item There exists a $\Gamma$-invariant set $B_{10^3 k}(p)\subseteq U\subseteq B_{10^5 k}(p)$ such that $\tilde \cA\setminus U$ is isometric to $S^3_\delta\times A_{10^4k,\infty}(0)\subseteq S^3_\delta\times C(S^2_{1-\epsilon})$ and such that $\Gamma$ is induced by the $(1,|\gamma|/k)$-action on $S^3_\delta\times S^2_{1-\epsilon}$ 
		\item There exist $\hat\Gamma$-invariant sets $S^3_\delta\times B_{2^{-1}}(x^a)\subseteq V^a\subseteq S^3_\delta\times B_{2}(x^a)$ with $d(S^3_\delta\times \{x^a\},S^3_\delta\times \{p\})=10^2 k$ which are isometric to $S^3_\delta\times B_{1}(0)\subseteq S^3_\delta\times C(S^2_{1-\hat\epsilon})$ and such that $\hat\Gamma$ is induced by the $(1,0)$-action on $S^3_\delta\times S^2_{1-\hat\epsilon}$ .
	\end{enumerate}
\end{proposition}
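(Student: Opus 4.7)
The plan is to realize concretely the combinatorial gluing described in the paragraphs preceding the statement, and then smooth the result while preserving $\Ric \geq 0$. The argument proceeds in three main steps, following the blueprint of \cite[Proposition 3.3]{BrueNaberSemolaMilnor1}, with the new cross-section structure requiring only cosmetic modifications since the gluing takes place in the cone factor.

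First, I would build a $\Gamma$-equivariant background with $k$ marked balls. Start from a slight modification of $\cB(\epsilon^{*}, \delta)$ from Proposition \ref{p:step1}, in which the cone cross section smoothly interpolates from $S^2_{1-\epsilon'}$ at radius $\lesssim 1$ to $S^2_{1-\epsilon}$ at radius $\gtrsim 10^4 k$. Concretely one writes the cone factor as a warped product $dr^2 + f(r)^2 g_{S^2}$ with $f$ concave and piecewise linear outside of a small transition region; the hypothesis $0 < \epsilon - \epsilon' \leq \epsilon/10^2$ is exactly the room needed for $f$ to be both concave and monotone, so that $\Ric \geq 0$ is preserved. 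This background carries an isometric $T^2$-action (left Hopf on $S^3_\delta$, rotation on the cross sections) from which a free isometric $\Gamma$-action is obtained by the homomorphic $(1, |\gamma|/k)$-embedding. Pick a point $x$ on the rotation axis of the cone factor at distance $10^2 k$ from the vertex; its $\Gamma$-orbit produces the required $k$ symmetric points $x^0, \ldots, x^{k-1}$, each pointwise fixed by $\hat\Gamma$, and I remove the $k$ disjoint $\hat\Gamma$-invariant balls $S^3_\delta \times B_1(x^a)$.

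Second, I would perform the equivariant gluing exactly as described in Subsection \ref{ss:step 3 outline}. Take $k$ copies $\hat\cA^a$ of $S^3_\delta \times B_1(0) \subseteq S^3_\delta \times C(S^2_{1-\hat\epsilon})$, each carrying its own $\hat\Gamma$-action by Hopf rotation only. Define $\varphi^0 : \partial \hat\cA^0 \to S^3_\delta \times \partial B_1(x^0)$ as the identity on $S^3_\delta$ paired with an $\hat\Gamma$-equivariant nearly-isometric diffeomorphism between the two 2-sphere boundaries, and propagate by setting $\varphi^a(y, a) = \gamma^a \cdot \varphi^0(y, 0)$. The resulting space $\tilde\cA$ admits a natural $\Gamma$-action by construction: $\gamma$ cyclically permutes the glued pieces $\hat\cA^0, \ldots, \hat\cA^{k-1}$ and after $k$ iterations reduces to the existing $\hat\gamma$-action on a single $\hat\cA^a$, so the relation $\gamma^k = \hat\gamma$ is automatically respected. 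The $\Gamma$-action restricts to the stated actions on the inner region, on the outer annulus, and on each glued piece.

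Third, and this is the main obstacle, I would smooth the metric across each gluing seam while keeping $\Ric \geq 0$. By $\Gamma$-equivariance it suffices to smooth in one $\hat\Gamma$-invariant tubular shell around $\partial \hat\cA^0$ and transport by $\gamma^a$. Up to small perturbation, both metrics being joined across the seam have the form $S^3_\delta \times (\text{rotationally symmetric 2-disk})$, so one can write a common warped-product ansatz $dr^2 + h(r)^2 g_{S^2}$ on the shell, with boundary data matching the cone-like background on one side and the nearly cylindrical twisted annulus on the other. Choosing $\hat\epsilon = \hat\epsilon(\epsilon,\epsilon')$ small enough makes $\partial \hat\cA^0$ sufficiently close to a round unit 2-sphere that $h$ can be chosen concave and $C^\infty$, which yields $\Ric \geq 0$ in the shell. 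The $k+1$ singular three-spheres of the statement are $S^3_\delta \times \{0\}$ (the vertex of the base cone) and $S^3_\delta \times \{x^a\}$ for $a = 0, \ldots, k-1$ (the vertices of the glued cones), where the metric fails to be smooth; the three explicit regions $U'$, $U$, and $V^a$ are just restrictions of the construction, and the claimed $\Gamma$-equivariance on each of them is immediate.
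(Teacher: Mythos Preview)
Your outline is essentially correct and mirrors the approach the paper takes, which is simply to invoke \cite[Proposition 3.4]{BrueNaberSemolaMilnor1} and observe that the construction carries over verbatim with $S^3\times S^2$ replacing $S^3\times S^3$, since all of the gluing and smoothing occurs in the three-dimensional cone factor.  One slip to correct: the point $x$ must be chosen \emph{off} the rotation axis of the cone factor, not on it---otherwise its $\Gamma$-orbit in the cone would collapse to a single point rather than produce the $k$ distinct points $x^0,\ldots,x^{k-1}$ that you (correctly) claim in the next clause.
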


Proposition \ref{p:step3} can be proved by following verbatim the proof of \cite[Proposition 3.4]{BrueNaberSemolaMilnor1}, with very minor changes due to the fact that we are working with $S^3 \times S^2$ instead of $S^3\times S^3$. Hence its proof will be omitted.

\begin{remark}
	It is important to observe that $\hat\epsilon(\epsilon,\epsilon')$ depends on the choices of $\epsilon$ and $\epsilon>\epsilon'$, however, it does not depend on the choice of $\delta$.  
\end{remark}

{\bf Constructing $M_{j+1}$:}  Let us now apply Proposition \ref{p:step3} in order to finish the construction of $M_{j+1}$.  Let us take in the above $\Gamma=\Gamma_{j+1}$ and $\hat\Gamma=\Gamma_j$, and let us choose $\epsilon=\epsilon_{j+1}$ with $\epsilon'=\epsilon_{j+1}\cdot \frac{99}{100}$.  Recall that the construction of $\hat M_j$ in Section 2 depended on a choice of $\hat\epsilon_j$, which had not yet been fixed.  Let us now use Proposition \ref{p:step3} in order to choose $\hat\epsilon_j = \hat\epsilon_j(\epsilon_{j+1})$.  From this we now have from Proposition \ref{p:step2} a well-defined $R_j$ and $\hat\delta_j$.  Finally let us now choose $\delta=\hat\delta_j$ in the application of Proposition \ref{p:step2}, so that we have built the space $\tilde \cA_j$.  After rescaling $\tilde \cA_j\to (R_jr_j) \tilde \cA_j$ by $R_jr_j$ observe that there exists $U\subseteq \tilde \cA_j$ which is isometric to $S^3_{\hat\delta_j R_j r_j}\times B_{R_jr_j}(0)\subseteq S^3_{\hat\delta_j R_j r_j}\times C(S^2_{1-\epsilon'})$, and also observe that the domains $V^a$ are isometric to $ S^3_{\hat\delta_j R_j r_j}\times B_{R_jr_j}(0)\subseteq S^3_{\hat\delta_j R_j r_j}\times C(S^2_{1-\hat\epsilon_{j}})$.\\

Finally, let us consider the base model $\cB_{j+1}=\cB(\epsilon', \hat\delta_j R_j r_j)$ from Proposition \ref{p:step1}.  We see we can glue it isometrically into $U\subseteq \tilde \cA_j$.  Additionally, we can isometrically glue $\hat M_j$ into each $V^a\subseteq \tilde \cA_j$.  The resulting space is $M_{j+1}$.  If we define $p_{j+1}=p_j^0$ to be the basepoint of the copy of $M_j$ glued into $V^0$, then we can define $r_{j+1}\equiv d(p_{j+1},\gamma_{j+1}\cdot p_{j+1})$ and $\delta_{j+1}$ through the formula $\delta_{j+1} r_{j+1}\equiv \hat\delta_j R_j r_j$.  This completes the induction step of the construction.  In particular, we have proved Theorem \ref{t:milnor6d} up to the proof of Theorem \ref{t:equivariant_mapping_class_S3xS2}. $\qed$\\

\vspace{.5cm}

\section{The topology of the universal cover}\label{sec:univdiffeo}

The goal of this section is to prove that the universal cover $\tilde M$ of the example constructed in Theorem~\ref{t:milnor6d} is diffeomorphic to $S^3 \times \mathbb{R}^3$ when the twisting diffeomorphisms $\phi_{2k}: S^3 \times S^2 \to S^3 \times S^2$ are chosen as in Remark~\ref{remark:isotopic_class} at each step of the inductive construction.
In order to do so, we will build a Morse-Bott function with no critical points outside from a central $S^3$.

\begin{proposition}\label{prop:Morse}
	There exists a proper smooth function $f: \tilde M\to [0,\infty)$ such that
	\begin{itemize}
		\item[(1)] The set $\{f < 1\}$ is diffeomorphic to $S^3 \times \dR^3$.

		\item[(2)] $f$ does not have critical points in $\{f>0\}$.
	\end{itemize}
\end{proposition}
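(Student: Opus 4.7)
The plan is to construct $f$ as the pullback of $|y|^2$ under a global diffeomorphism $F:\tilde M\to S^3\times\dR^3$, where $y\in\dR^3$; since $|y|^2$ has no critical points outside $S^3\times\{0\}$, and $\{|y|^2<1\}\approx S^3\times B_1(0)\cong S^3\times\dR^3$, both conclusions of the proposition follow. The diffeomorphism $F$ itself will be built by constructing compatible diffeomorphisms $F_j:M_j\to S^3\times\dR^3$ satisfying $F_{j+1}\circ\Phi_j=F_j$ on $U_j\subseteq M_j$, so that they pass to the direct limit $\tilde M=\varinjlim(U_j,\Phi_j)$.

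The base case $M_1=\mathcal{B}(\epsilon_1,\delta_1)$ is immediate from Proposition~\ref{p:step1}(1). For the inductive step, recall that $M_{j+1}$ is obtained from $M_j$ in two stages: first $M_j$ is extended to $\hat M_j$ by attaching the twisting neck $X_j$ from Proposition~\ref{p:step2}, and then $k_{j+1}$ copies of $\hat M_j$ are glued into the base model $\mathcal{B}_{j+1}$ via Proposition~\ref{p:step3}. I need to extend $F_j$ through each of these two stages without changing the target diffeomorphism type.

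The key ingredient is Remark~\ref{remark:isotopic_class}: the twisting diffeomorphism $\phi:S^3\times S^2\to S^3\times S^2$ arising from Theorem~\ref{t:equivariant_mapping_class_S3xS2} may be chosen isotopic to a fiber-preserving diffeomorphism $\psi(s_1,s_2)=(s_1,\psi_{s_1}(s_2))$ with $\psi_{s_1}\in O(3)$, and any such $\psi$ extends canonically to $\bar\psi:S^3\times D^3\to S^3\times D^3$. Thus attaching $X_j$ to $M_j$ is, up to an isotopy of the attaching collar supported away from $U_j$, equivalent to cutting out a $S^3\times D^3$ inside $F_j(M_j)\cong S^3\times\dR^3$ and re-gluing it through $\bar\psi$; this leaves the diffeomorphism type unchanged and yields $\hat F_j:\hat M_j\to S^3\times\dR^3$ extending $F_j$. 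For the gluing stage, the attaching maps $\varphi^a$ of Section~\ref{ss:step 3 outline} are built from an almost-isometry $\varphi^0$ that is the identity on the $S^3$ factor, pre-composed with the isometric $\gamma^a$-action; since $\mathcal{B}_{j+1}$ is itself diffeomorphic to $S^3\times\dR^3$ by Proposition~\ref{p:step1}(1) and each $\hat M_j$ is attached along a trivial $S^3\times S^2$ collar, each of the $k_{j+1}$ gluings amounts to excising a standard $S^3\times D^3$ from $\mathcal{B}_{j+1}$ and replacing it with $\hat M_j$, again producing a manifold diffeomorphic to $S^3\times\dR^3$ and extending $F_j$ to the desired $F_{j+1}$.

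The main obstacle is the twisting step: a generic self-diffeomorphism of $S^3\times S^2$ need not extend across $D^3$ in the $S^2$-factor (the mapping class group of $S^3\times S^2$ is non-trivial), and without such an extension one would have no control on the diffeomorphism type of $\hat M_j$. The content of Remark~\ref{remark:isotopic_class} is precisely that $\phi$ can be put in a form which does extend, so the crux of the argument is to implement the isotopy $\phi\simeq\psi$ and the extension $\bar\psi$ in a compatible way, carefully tracking the central $S^3\subset F_j(U_j)\subset S^3\times\dR^3$ so that it is preserved by $F_{j+1}$. This guarantees that the resulting function $f$ has $f^{-1}(0)$ equal to a single embedded $S^3$ and creates no new critical points at any inductive stage.
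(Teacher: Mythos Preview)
Your strategy—build compatible diffeomorphisms $F_j:M_j\to S^3\times\dR^3$, pass to the direct limit, and pull back $|y|^2$—is genuinely different from the paper's. The paper never constructs the global diffeomorphism first; instead it builds the Morse--Bott functions $f_j$ directly via a \emph{double} induction. On $\cB_{j+1}\setminus\bigcup_aV^a$ one sets $f_{j+1}$ essentially equal to $|x-x^0|^2$, which automatically matches $f_j$ on the copy $V^0$; on the remaining copies $V^a$ ($a\neq 0$) the function is affine on each $\dR^3$-slice near the boundary, and a separate inner induction (Lemma~\ref{lemma:extension1}) extends such slice-affine data over $\hat M_j$ without critical points, using that $\psi_{s_1}\in O(3)$ sends affine functions to affine functions. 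Properness of the limit $f$ is then read off directly (condition~(v) of Lemma~\ref{lemma:inductionMorse}).

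There is a real gap in your passage to the limit. Your description of $F_{j+1}$—``each of the $k_{j+1}$ gluings amounts to excising a standard $S^3\times D^3$ from $\cB_{j+1}$ and replacing it with $\hat M_j$''—builds $F_{j+1}$ from the $\cB_{j+1}$ side, so that $F_{j+1}^{-1}(S^3\times\{0\})$ lies at the center of $\cB_{j+1}$, not at $p_{j+1}\in V^0$; this is incompatible with $F_{j+1}\circ\Phi_j=F_j$. To genuinely extend $\hat F_j$ \emph{outward} from $V^0$ you must show that the boundary identification (which carries the twist $\phi$) extends over $S^3\times(\dR^3\setminus D^3)$, not merely over $S^3\times D^3$ as in Remark~\ref{remark:isotopic_class}. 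This is in fact true, since $\psi_{s_1}\in O(3)$ extends radially both inward and outward, but you have not said it. Second, even granting compatible $F_j$'s, the direct-limit map $F$ is a diffeomorphism onto $S^3\times\dR^3$ only if $\bigcup_j F_j(U_j)$ exhausts the target, and nothing in your construction forces this. The paper flags exactly this issue: the remark preceding the proof notes that the extension $\bar\psi$ suffices to show each $M_j\cong S^3\times\dR^3$ ``in a more classical way,'' but the Morse-function route is taken precisely so that the limit $\tilde M$ is handled as well.
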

Given $f:\tilde M \to \dR$ as in Proposition \ref{prop:Morse}, it is standard to check that $\tilde M \approx S^3 \times \dR^3$. Indeed, $f$ is proper and has no critical points in $\{1\le f <\infty\}$. So, by Morse theory, $\tilde M$ is diffeomorphic to $\{f < 1\} \approx S^3 \times \dR^3$. As will be clear from the construction, each of the manifolds $M_j$ in the inductive proof is also diffeomorphic to $S^3\times \dR^3$.

\begin{remark}
Notice that a single gluing of $S^3\times D^3$ with $S^3\times \dR^3\setminus S^3\times D^3$ by a gluing diffeomorphism $\phi:S^3\times S^2\to S^3\times S^2$ is diffeomorphic to $S^3\times \dR^3$ independently of the isotopy class of the diffeomorphism. This claim can be established with a much easier variant of the construction that we discuss below.
\end{remark}

\begin{remark}
Independently of the construction of the Morse-Bott function on $\tilde{M}$, which yields the diffeomorphism with $S^3\times \dR^3$, it would be possible to compute the homology of each of the manifolds $M_j$ and of $\tilde{M}$ by iterated use of the Mayer-Vietoris sequence. As in the construction of the Morse-Bott function, the explicit form of the gluing diffeomorphisms plays a key role in showing that the homology coincides with that of $S^3\times \dR^3$.
\end{remark}

\begin{remark}
By relying on the fact that each of the gluing diffeomorphisms $\phi_k:S^3\times S^2\to S^3\times S^2$ extends to a diffeomorphism $\overline{\phi}_k:S^3\times D^3\to S^3\times D^3$ it would be possible to check that each of the manifolds $M_j$ is diffeomorphic to $S^3\times\dR^3$ in a more classical way. Our argument is designed so that it can be used to conclude that the limit $\tilde{M}$ is diffeomorphic to $S^3\times \dR^3$ as well.
\end{remark}

Let us begin with an outline of the proof of Proposition \ref{prop:Morse}, and then the remaining parts of this section are devoted to the details of the proof.\\ 

The Morse-Bott function $f$ will be built inductively on each $\hat M_j$, where recall $\hat M_j$ is $M_j$ twisted at infinity as in Step 2 of the construction. The inductive construction will be such that $f_{j+1}:\hat{M}_{j+1}\to [0,\infty)$ coincides with $f_j$ on $\Phi_j(\hat U_j)\subset \hat{M}_{j+1}$ for any $j\in\dN$, see the introductory discussion of Section \ref{s:outline_milnor:inductive} for the relevant notation. Hence the definition of the global function $f:\tilde{M}\to [0,\infty)$ will be straightforward once the inductive construction has been completed.\\

Recall that $\hat M_j \setminus \hat U_j$ is diffeomorphic to the complement of a ball in $S^3 \times \dR^3$. The Morse-Bott function $f_j$ in $\hat M_j$ at infinity will by assumption always look like the distance squared from the origin.  The base step of the induction will be elementary: In $\hat M_1 = S^3 \times \dR^3$ we define $f_1$ as the distance squared from the origin of $\dR^3$, independent of the $S^3$ factor.

The inductive step proceeds roughly as follows. Let $\{x^a\}_{a=0,\ldots,k_{j+1}-1}\subset \dR^3$ be the centers of the disks $D^3_a$ that we remove from $\dR^3$ to glue in $k_{j+1}$-copies of $\hat M_j$. In $S^3 \times (\dR^3 \setminus \bigcup_a D_a^3)$ we define the function $f_{j+1}$ as the distance squared from $x^0$, independent from the $S^3$ factor. The function $f_{j+1}$ matches with $f_j$ in a neighborhood of the gluing region. Thus when we glue in the first copy of $\hat M_j$ along an annular region near $\partial(S^3 \times D_0^3)$ we can extend $f_{j+1}$ using $f_j$. At this point, it remains to glue in the other $k_{j+1}-1$ copies of $\hat M_j$ and extend $f_{j+1}$ to a function {\it without} critical points in each of these copies of $\hat M_j$. In order to achieve this extension, we rely on a second induction procedure, which is the most delicate part of our argument. Here is where it is helpful to know that the gluing diffeomorphisms $\phi_{2k} : S^3 \times S^2 \to S^3 \times S^2$ are isotopic to diffeomorphisms with the special structure
\begin{equation}\label{eq:z}
	\psi(s_1,s_2) = (s_1, \psi_{s_1}(s_2))\, , \quad  \, \, s_1, \in S^3\,\, s_2\in S^2 \,,
\end{equation} 
where $\psi_{s_1}$ is an orthogonal transformation in $O(3)$ for each $s_1\in S^3$ and the dependence on $s_1$ is smooth.

Roughly, the idea is the following.
Since we are only interested in the diffeomorphism class of $\tilde{M}$, we can replace all the gluing diffeomorphisms in the construction with isotopic diffeomorphisms.
Thus without loss we assume that the gluing of $\hat M_j$ along $\partial (S^3 \times D_a^3)$ are induced by a diffeomorphism $\psi$ with the structure in \eqref{eq:z}. We then need to extend the pull-back of $f_{j+1}|_{\partial (S^3 \times  D_a^3)}$ through $\psi$ to a function without critical points in $\hat M_j$. 
Notice that $f_{j+1}$ looks like a nontrivial affine function of the $\dR^3$ factor in a neighborhood of the boundary $\partial (S^3 \times D_a^3)$, $a\neq 0$. Up to a small perturbation that does not introduce any critical point, we can, and will, assume that it is affine on each $\dR^3$ factor.\\
From \eqref{eq:z} we then deduce that the pullback of $f_{j+1}$ through $\psi$ looks like an affine function in the $\dR^3$ factor of $S^3 \times \dR^3$, outside of a compact set in $\hat M_j$. Notice that the pull-back is no longer independent of the $S^3$ factor, though it is affine on $\dR^3$ for each fixed point in $S^3$.  By employing yet another induction argument, we will see that a function with this property can be extended to a function without critical points.  This will be carried out in subsection \ref{subsec:extension}.  In order to complete the proof, it remains only to slightly perturb the function $f_{j+1}$ outside of a compact set and without introducing further critical points so that it again coincides with the distance squared from the origin at infinity in $\hat M_{j+1}$. \\

\subsection{Extension without critical points}\label{subsec:extension}

Recall that, for each $\hat M_j$ there are open sets $\hat U_j \subset \hat M_j$ such that $\hat M_j \setminus \hat U_j \approx S^3 \times A_{r_j,\infty}(0)$ with $r_j\uparrow \infty$.
To build $\hat M_{j+1}$, we start from $S^3 \times (\dR^3 \setminus \bigcup_a B_{r_j}(x^a))$, where $\{x^a\}_{a=0,\ldots,k_{j+1}-1}\subset \dR^3$ are chosen such that the annular regions $A_{r_j,2r_j}(x^a) \subset \dR^3$ are disjoint. The diffeomorphism class of the construction is independent of the positions of their centers, however for the sake of clarity we assume that $\{x^a\}_{a=0, \ldots, k_{j+1}-1} \subset \partial B_{r_{j+1}/10}(0)$ is the orbit of a rotation by angle $2\pi/k_{j+1}$, and $r_{j+1} \gg r_j \gg 1$.

We then glue in $k_{j+1}$ copies of $\hat M_j$ along the neck regions $X_j \approx S^3 \times A_{r_j,2r_j}(x^a)$, through diffeomorphisms $\psi_a : X_j \to S^3 \times A_{r_j,2r_j}(x^a)$ obtained by radially extending $\psi$ as
\begin{equation}\label{eq:psi_a}
	S^3 \times A_{r_j,2r_j}(0)\ni (s_1,r,s_2) \mapsto (s_1,r,\psi_{s_1}(s_2))\in S^3 \times A_{r_j,2r_j}(0)
\end{equation}
and composing with a suitable isometry of $S^3\times \dR^3$ which maps $S^3 \times A_{r_j,2r_j}(0)$ to $S^3 \times A_{r_j,2r_j}(x^a)$.

\begin{lemma}\label{lemma:extension1}
	Fix $j\in \mathbb{N}$, $a\in \{0,\ldots, k_{j+1}-1\}$, and $x^a$ as above. Let $u: S^3 \times A_{r_j,2r_j}(x^a) \to \dR$ be such that $u(x, \cdot): A_{r_j,2r_j}(x^a) \to \dR$ is the restriction of a non-constant affine function of $\dR^3$. Then, there exists a smooth function $u_j^a : \hat M_j \to \dR$ such that
	\begin{itemize}
		\item[(1)] $u_j^a = u \circ \psi_a$ in $\hat M_j \setminus \hat U_j$.
		
		\item[(2)] $u_j^a$ does not have critical points in $\hat U_j$.
		
		\item[(3)] $\inf_{\hat U_j} u_j^a \ge \inf_{S^3 \times A_{1,2}(x^a)} u$.
	\end{itemize}
\end{lemma}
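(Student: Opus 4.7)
The plan is to prove Lemma~\ref{lemma:extension1} by induction on $j$, exploiting the recursive structure of $\hat M_j$ as the base model $\cB_j$ (itself topologically $S^3\times \dR^3$) with $k_j$ copies of $\hat M_{j-1}$ glued in through the radially extended diffeomorphisms $\phi_b$ of the form~\eqref{eq:psi_a}. The inductive invariant that we track is stronger than what the lemma literally demands: at every stage, the extension will be \emph{fiberwise affine}, i.e., for each fixed $s_1\in S^3$ its restriction to the $\dR^3$ factor of the outer neck (or to the $\dR^3$ factor of the ``base part'' of $\hat M_j$) is the restriction of a non-constant affine function. This automatically kills critical points there, because the $\dR^3$-component of the gradient is a fixed non-zero vector for each $s_1$.

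\textbf{Base case $j=1$.} Here $\hat M_1 = S^3\times \dR^3$. The radially extended $\psi_a$ acts in Cartesian coordinates as $(s_1,y)\mapsto (s_1,\psi_{s_1}(y)+x^a)$, with $\psi_{s_1}\in O(3)$ depending smoothly on $s_1$. Since each $u(s_1,\cdot)$ is the restriction of a non-constant affine function $L_{s_1}$ of $\dR^3$, the pullback $u\circ\psi_a$ is also fiberwise a non-constant affine function of $y$, with gradient $\psi_{s_1}^T\nabla L_{s_1}$; this formula extends verbatim from the annulus to all of $S^3\times \dR^3$. Taking $u_1^a$ to be this extension yields (1) and (2), while condition (3) follows because a non-constant affine function attains its infimum on any bounded domain on the boundary.

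\textbf{Inductive step.} We first mimic the base-case construction inside $\cB_j$: extend $u\circ\psi_a$ from the outer neck to a smooth function $v$ on all of $\cB_j$ that is fiberwise a non-constant affine function of the $\dR^3$ coordinate. On $\cB_j\setminus \bigcup_b(S^3\times D^3_b)$ this has no critical points and serves as $u_j^a$ on the base part of $\hat M_j$. Next, in a collar of each $\partial(S^3\times D^3_b)$, the pullback of $v$ through the radially extended $\phi_b$ is again fiberwise affine on the $\dR^3$ coordinate of the outer neck of $\hat M_{j-1}^{(b)}$: writing $v(s_1, y)=c_{s_1}+\beta_{s_1}\cdot y$ in the local Cartesian coordinate centered at the $b$-th disk and using that $\phi_b$ acts on $\dR^3$ fiberwise through an element of $O(3)$, one computes that the pullback reads $c_{s_1}+(\phi_{b,s_1}^T\beta_{s_1})\cdot y$, which is still fiberwise a non-constant affine function. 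Hence by the inductive hypothesis applied to each $\hat M_{j-1}^{(b)}$, the pullback extends to a function on $\hat M_{j-1}^{(b)}$ without critical points in $\hat U_{j-1}^{(b)}$; patching produces $u_j^a$ on all of $\hat M_j$. Condition (3) is propagated at each step: on the base part, by the boundary-minimum property of affine functions; on each glued copy, by the inductive hypothesis.

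\textbf{Main obstacle.} The crucial point is the preservation of the fiberwise-affine structure across the gluings. This is exactly where the special form~\eqref{eq:z} of the twisting diffeomorphisms is essential: because $\phi_b$ acts on the $S^2$ factor via an element of $O(3)$ depending only on $s_1$, its radial extension acts on $\dR^3$ fiberwise as an orthogonal transformation and hence pulls non-constant affine functions back to non-constant affine functions. This is precisely the structural restriction ensured by Remark~\ref{remark:isotopic_class}; without it the induction would already fail at the first gluing, since for a general diffeomorphism mixing the $S^3$ and $S^2$ factors the pullback of an affine function need not be affine on the $\dR^3$ fiber, and a general extension would have no reason to avoid critical points.
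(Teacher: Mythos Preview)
Your proposal is correct and follows essentially the same approach as the paper: induct on $j$, use the fact that the $O(3)$-structured gluing diffeomorphisms preserve the fiberwise-affine property, extend fiberwise-affinely on the $S^3\times\dR^3$ base (where the $\dR^3$-gradient is nowhere zero), and then invoke the inductive hypothesis on each glued copy of $\hat M_{j-1}$. Your write-up is somewhat more explicit than the paper's (e.g.\ you spell out the pullback formula $c_{s_1}+(\phi_{b,s_1}^T\beta_{s_1})\cdot y$ and the boundary-minimum argument for (3)), but the logical skeleton is identical.
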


\begin{proof}
	We proceed by induction in $j\ge 1$. The base case $j=1$ is trivial.
	
For the inductive step let us begin by remarking that as a consequence of \eqref{eq:z}, $\psi$ maps $\{x\}\times S^2$ to itself isometrically for each $x\in S^3$. In view of \eqref{eq:psi_a}, we deduce that
	\begin{equation}
		u\circ \psi_a : S^3 \times A_{r_j,2r_j}(0) \to \dR\, ,
	\end{equation}
	retains the property that $u\circ \psi_a(x,\cdot) : A_{r_j,2r_j}(0)\subseteq \dR^3 \to \dR$ is the restriction of a non-constant affine function. In particular, we can uniquely extend it to  $\tilde u_j: S^3 \times \dR^3 \to \dR$ such that $\tilde u_j(x,\cdot): \dR^3 \to \dR$ is affine and non-constant.  Consequently $\tilde u_j$ has no critical points, as its derivative in the $\dR^3$ directions are nonzero. Notice that by construction $\inf_{S^3 \times B_2(0)} \tilde u_j \ge \inf_{S^3\times A_{1,2}(x^a)}u$.
	To define a Morse function on $\hat M_j$, we first have to pluck-out $k_{j}$ copies of $S^3 \times D^3$ from $S^3\times \dR^3$ and glue in $k_{j}$ copies of $\hat M_{j-1}$ along the annular regions.  We then need to smoothly extend $\tilde u_j$ in each copy of $\hat U_{j-1}$. However, $\tilde u_j$ restricted to the gluing regions satisfies the assumptions of Lemma \ref{lemma:extension1}, hence we can apply our inductive hypothesis to conclude the result.
\end{proof}

\vspace{.3cm}

\subsection{Inductive construction of the Morse function}

The following inductive lemma provides the Morse-Bott functions $f_j :  M_j\to \dR$.

\begin{lemma}\label{lemma:inductionMorse}
	For every $j\ge 1$, there exists a proper smooth function $f_j: \hat M_j \to [0,\infty)$ such that
	\begin{itemize}
		\item[(i)] $f_j=f_{j+1}$ in $ \Phi_j(\hat U_j) \subset \hat M_{j+1}$.

		\item[(ii)] $f_j(y)=r^2(y)$ on $\hat M_j\setminus \hat U_j \approx S^3 \times A_{r_j,\infty}(0)$, where $r(y)$ coincides with the distance to $S^3 \times \{0\}$. 		
		\item[(iii)] $f_1: \hat M_1 \approx S^3 \times \dR^3 \to [0,\infty)$ is defined by $f_1(s,x) = |x|^2=r^2(s,x)$.
		
		\item[(iv)] $f_j$ does not have critical points in $\{f_j>0\}$.
		\item[(v)] we have that $\inf_{\hat U_{j+1}\setminus \hat U_{j}} f_{j+1} \to \infty$ as $j\to \infty$ .
	\end{itemize}
\end{lemma}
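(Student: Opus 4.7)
The plan is to prove the lemma by induction on $j$. The base case $j=1$ is immediate from property (iii) since $f_1(s,x) = |x|^2$ is proper, smooth, equals $r^2$, and has no critical points away from $S^3\times\{0\}$. For the inductive step, suppose $f_j:\hat M_j \to [0,\infty)$ is constructed satisfying (i)--(v). Recall from Section \ref{ss:step 3 outline} that $\hat M_{j+1}$ is assembled by removing $k_{j+1}$ balls $S^3 \times B_{r_j}(x^a)$ from the base $\cB_{j+1}$, with $x^a \in \partial B_{r_{j+1}/10}(0)$ arranged as a cyclic orbit, and gluing in copies $\hat M_j^{(a)}$ of $\hat M_j$ for $a=0,\ldots,k_{j+1}-1$; I label so that $\hat M_j^{(0)} \supset \Phi_j(\hat U_j)$ is the distinguished copy containing $p_{j+1}$. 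First I would define
\begin{equation*}
	f_{j+1}(s,x) \equiv |x - x^0|^2
\end{equation*}
on the base region identified with $S^3\times\bigl(\dR^3\setminus\bigsqcup_a B_{r_j}(x^a)\bigr)$, extended smoothly across the central region of $\cB_{j+1}$ via the ambient identification with $S^3\times\dR^3$. Since $x^0$ lies inside the removed ball at $a=0$, this is smooth, proper, and free of critical points on the base region. On the distinguished copy $\hat M_j^{(0)}$, identified with $\hat M_j$ through $\Phi_j$, I set $f_{j+1}\equiv f_j$; by (ii) applied to $f_j$, the two definitions agree smoothly on the gluing collar, and property (i) is automatic.

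Next I would extend $f_{j+1}$ into each non-distinguished copy $\hat M_j^{(a)}$, $a\neq 0$, which is the heart of the argument. Since $|x^a-x^0|\ge 2(r_{j+1}/10)\sin(\pi/k_{j+1})\gg r_j$, on the collar $S^3\times A_{r_j,2r_j}(x^a)$ the restriction of $|x-x^0|^2$ is very close to the non-constant affine function $\ell_a(x)\equiv |x^a-x^0|^2 + 2(x^a-x^0)\cdot(x-x^a)$. Using a radial cutoff on a thin subannulus of the collar, I would interpolate from $|x-x^0|^2$ to $\ell_a$ without introducing critical points, since both functions have nonvanishing gradients that are nearly parallel there. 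By Remark \ref{remark:isotopic_class}, the gluing diffeomorphism $\psi_a$ may be taken of the form \eqref{eq:psi_a} with $\psi_{s_1}\in O(3)$ acting affinely on the $\dR^3$ factor slicewise in $s_1 \in S^3$, so the pullback $f_{j+1}\circ\psi_a$ has, for every fixed $s_1$, an affine non-constant restriction to the annulus $A_{r_j,2r_j}(0)\subset\dR^3$. Lemma \ref{lemma:extension1} then yields $u_j^a:\hat M_j^{(a)}\to\dR$ agreeing with this pullback outside $\hat U_j$ and critical-point-free inside $\hat U_j$, and I set $f_{j+1}\equiv u_j^a$ on this copy.

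Finally, to enforce (ii) for $f_{j+1}$ at infinity of $\hat M_{j+1}$ I would interpolate on a far annulus $\{|x|\ge R\}$, $R\gg r_{j+1}$, between $|x-x^0|^2$ and $|x|^2=r^2$ via a radial cutoff; for $R$ large the radial component of the gradient dominates and no critical points appear. Properties (i), (iii), (iv) are preserved throughout the construction. Property (v) follows because on $\hat U_{j+1}\setminus\Phi_j(\hat U_j)$ the function $f_{j+1}$ is bounded below by $r_j^2$: on the base portion via $|x-x^0|\ge r_j$, and on the non-distinguished copies via item (3) of Lemma \ref{lemma:extension1}; and $r_j\to\infty$ by assumption. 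The main obstacle is the extension step for $a\neq 0$: it relies crucially on the special form \eqref{eq:z} of the gluing diffeomorphism guaranteed by Remark \ref{remark:isotopic_class}, and on the nested induction of Lemma \ref{lemma:extension1}; the preliminary replacement of $|x-x^0|^2$ by $\ell_a$ in the collar must be arranged so that no new critical points are produced and smoothness across the gluing is preserved.
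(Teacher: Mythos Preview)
Your proof is correct and follows essentially the same approach as the paper: define $f_{j+1}$ on the base region as (a modification of) $|x-x^0|^2$, match with $f_j$ on the distinguished copy via property (ii), invoke Lemma \ref{lemma:extension1} on the non-distinguished copies after making the boundary data affine, and interpolate to $|x|^2$ at infinity. The only organizational difference is that the paper packages all the $\dR^3$-level interpolations (quadratic-to-affine near each $x^a$, $a\neq 0$, and shifted-to-centered at infinity) into a single auxiliary function $\eta:\dR^3\setminus\bigcup_a B_{r_j}(x^a)\to[0,\infty)$ with a short list of properties whose existence is declared elementary, whereas you carry out these interpolations explicitly in separate steps.
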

\vspace{.3cm}

By condition (i), we obtain a naturally defined Morse-Bott function $f:\tilde M\to [0,\infty)$ by letting $f:=f_j$ on $\hat U_j\subset \tilde{M}$, with the obvious identifications. By (v) we have that $f$ is proper. By (iii), the sublevel set $\{f<1\}$ is diffeomorphic to $S^3\times \dR^3$. By (iv), $f$ does not have any critical points on $\{f>0\}$.
Thus the proof of Proposition \ref{prop:Morse} will be complete once Lemma \ref{lemma:inductionMorse} is proved.

\begin{proof}[Proof of Lemma \ref{lemma:inductionMorse}]
For the base step, we let $f_1: M_1 \equiv S^3 \times \dR^3 \to [0,\infty)$ be defined by $f_1(s,x) = |x|^2$. In particular, condition (iii) is satisfied.
\smallskip

For the inductive step, we start from $S^3 \times (\dR^3 \setminus \bigcup_a B_{r_j}(x^a))$, where $\{x^a\}_{a=0,\ldots,k_{j+1}-1}\subset \partial B_{r_{j+1}/10}(0)$ is invariant under a rotation by angle $2\pi/k_{j+1}$, and $r_{j+1} \gg r_{j} \gg 1$ is big enough so that the annular regions $A_{r_j,2r_j}(x^a) \subset \dR^3$ are disjoint with $0\notin A_{r_j,2r_j}(x^a)$.
\\ 
We claim that there exists a smooth function without critical points $\eta:\dR^3 \setminus \bigcup_a B_{r_j}(x^a)\to [0,\infty)$ such that 
\begin{itemize}
\item[a)] $\eta(x)=|x-x^0|^2$ on $A_{r_j,2r_j}(x^0)$;
\item[b)] $\eta$ is affine on each annulus $A_{r_j,2r_j}(x^a)$ for $a=1,\dots,k_{j+1}-1$;
\item[c)] $\eta(x)=|x|^2$ on $\dR^3\setminus B_{100r_j}(0)$;
\item[d)] $\eta(x)\ge \frac{1}{2}|x-x^0|^2$ on $\dR^3 \setminus \bigcup_a B_{r_j}(x^a)$.
\end{itemize}
The existence of a function $\eta$ with the properties above is completely elementary, therefore we omit the proof.
\smallskip

In order to define $f_{j+1}: \hat M_{j+1}\to [0,\infty)$, we set can set it to coincide with the function $\eta$ on $\hat M_{j+1}\setminus (\bigcup_{a=0}^{k_j-1}\hat{M}_j^a)$. Up to choosing properly the parameters $r_j\gg r_{j-1}$, the functions $\eta$ and $f_j$ coincide in the gluing region by a), hence $f_{j+1}$  extends to $\hat M_{j+1}\setminus (\bigcup_{a=1}^{k_j-1}\hat{M}_j^a)$. In this way, taking into account also c) above, it is clear that conditions (i), (ii) and (iii) are met by $f_{j+1}$. \\
The extension to $\hat M_{j+1}$ can then be achieved with the help of Lemma \ref{lemma:extension1}, thanks to condition b) above.\\ 
As the extensions from Lemma \ref{lemma:extension1} do not introduce any critical points, it is also clear from the construction that $f_{j+1}$ does not have any critical points in $\{f_{j+1}>0\}$.\\
The validity of (v) then follows from condition d) above and from ii).

\end{proof}

\vspace{.5cm}

\section{Further properties of the counterexample}\label{sec:furtherproperties}

In this section we discuss some further geometric properties of the counterexamples to the Milnor conjecture constructed in this paper, and of their universal covers.  In particular we prove Theorem \ref{t:milnor6d}.1-\ref{t:milnor6d}.4 and additionally study the tangent cones at infinity of our examples.  We remark that all the statements adapt, mutatis mutandis, to the $7$-dimensional counterexamples constructed in our earlier work \cite{BrueNaberSemolaMilnor1}.

\subsection{Curvature decay}

The goal of this subsection is to prove the following, which is equivalent to Theorem \ref{t:milnor6d}.1:

\begin{proposition}\label{prop:curvdecay}
For any $\eta>0$, a complete manifold $(M^6,g,p,\Gamma)$ as in Theorem~\ref{t:milnor6d} can be constructed so that it satisfies
\begin{equation}\label{eq:eta_curvature}
|Rm|(q)\le \frac{C}{d(p,q)^{2-\eta}}\, , \quad \text{for every $q\in M$}\, ,
\end{equation}
for some constant $C>0$.
\end{proposition}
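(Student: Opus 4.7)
The plan is to perform a scale-by-scale curvature analysis of the inductive construction from Section \ref{s:outline_milnor:inductive}, exploiting the fact that the scale ratios $r_{j+1}/r_j$ may be made arbitrarily large relative to the scale-invariant curvature constants of the geometric pieces added at each step.

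First I would decompose $\tilde M$ into the annular regions $A_{r_j,r_{j+1}}(p)$ and identify, within each such region, the geometric building blocks introduced at step $j$ of the induction: the background model $\cB(\epsilon_{j+1},\delta_{j+1})$ from Proposition \ref{p:step1}, the twisting neck $r_j\cdot X_j$ from Proposition \ref{p:step2}, the action-extension region $(R_j r_j)\cdot\tilde\cA_j$ from Proposition \ref{p:step3}, and the cylindrical interpolation pieces isometric to subsets of $S^3_\delta\times C(S^2_{1-\epsilon})$. Each of the three non-cylindrical building blocks, viewed at its native unit scale, admits a curvature bound $C_j$ depending only on the finitely many parameters $\epsilon_j,\hat\epsilon_j,\delta_j,\epsilon_{j+1},\delta_{j+1},k_{\le j}$ fixed up through step $j$; after the inductive rescaling, the curvature at a point at distance $\sim r_j$ from $p$ is thus bounded by $C_j/r_j^2$. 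In the cylindrical regions the curvature comes only from the sphere factor and equals $(\delta_{j+1} r_{j+1})^{-2}$.

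The plan is then to exploit two sources of freedom in the construction. On the one hand, in Proposition \ref{p:step2} one may use any $R_j$ beyond the existence threshold without changing the unit-scale curvature bound on $X_j$; on the other hand, at any step one may interpolate an isometric cylindrical spacer of the form $S^3_{\delta_{j+1} r_{j+1}}\times A_{a,b}(0)\subseteq S^3_{\delta_{j+1} r_{j+1}}\times C(S^2_{1-\epsilon_{j+1}})$ between consecutive scales without affecting any curvature constants at earlier steps. Using these freedoms inductively, once all parameters at step $j$ have been fixed I can choose $r_{j+1}$ large enough that $C_{j+1}\le r_{j+1}^\eta$, and simultaneously let $\delta_{j+1}$ decay no faster than $r_{j+1}^{-\eta/2}$, ensuring $(\delta_{j+1} r_{j+1})^{-2}\le r_{j+1}^{\eta-2}$. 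Piecing these bounds together yields $|\Rm|(q)\le C_j/r_j^2\le r_j^{\eta-2}$ for $q$ at scale $r_j$, which gives the claimed inequality and descends to $M=\tilde M/\Gamma$. The main technical point, which is essentially bookkeeping, is to verify that the scale-invariant curvature bounds in Propositions \ref{p:step2} and \ref{p:step3} really can be taken independent of $R_j$; this should follow from the fact that the transition regions in the constructions of \cite[Sections 6--7]{BrueNaberSemolaMilnor1} can be lengthened without a curvature cost.
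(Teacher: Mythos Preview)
Your strategy is the same as the paper's: exploit the freedom in the inductive construction to insert extra scales so that the step-$j$ curvature constants $C(k_{\le j})$ are absorbed by the subcritical power $r^\eta$. There is one point where your implementation differs from the paper's and where your write-up is imprecise. The product-type ``cylindrical'' spacers $S^3_{\delta_{j+1}r_{j+1}}\times A_{a,b}(0)\subset S^3_{\delta_{j+1}r_{j+1}}\times C(S^2_{1-\epsilon_{j+1}})$ that you highlight have curvature $\sim(\delta_{j+1}r_{j+1})^{-2}$ coming from the fixed-radius $S^3$ factor, and this does not decay with $d$; such a region can only be extended out to $d\lesssim(\delta_{j+1}r_{j+1})^{2/(2-\eta)}$ before the bound fails. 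Since $\delta_{j+1}r_{j+1}=\hat\delta_j R_j r_j$ is already determined by the end of step~$j$, the constraint $\delta_{j+1}\ge r_{j+1}^{-\eta/2}$ is not something you can impose freely afterward.

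The mechanism that actually works is the other freedom you mention in passing: enlarging $R_j$ inside the neck of Proposition~\ref{p:step2}. The paper does this explicitly by inserting a \emph{conical} spacer, an annulus in $C(S^3_{\delta_j/10}\times S^2_{1/8})$, between the sub-regions $X_1\cup X_2\cup X_3$ and $X_4\cup\cdots\cup X_7$ of the neck. On a cone the curvature decays like $1/r^2$, strictly faster than $1/r^{2-\eta}$, so the subcritical bound improves to $|\Rm|\le\mu/d^{2-\eta}$ for any prescribed $\mu>0$ at the far end of the spacer. This budget then covers the $k_{\le j}$-dependent curvature of the twisting region $X_4\cup\cdots\cup X_7$ that follows. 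The paper also modifies $X_3$ to shrink $\delta$ only to $\delta_j/10$ (a bounded number of scales) before the spacer, deferring the further shrinking to $\delta(k_{\le j})$ until afterward; and it packages the induction as a two-constant claim ($10^{100}$ globally, $1$ on a specific annulus) to keep the bookkeeping clean. Your ``main technical point'' at the end is exactly this conical-spacer insertion, and it is the heart of the argument rather than a bookkeeping detail.
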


\begin{remark}
Of course, no curvature decay can be expected for the universal covering of a non-flat manifold with an infinite fundamental group. However, a straightforward corollary of Proposition~\ref{prop:curvdecay} above is that the universal covers of the counterexamples to the Milnor conjecture constructed in this paper can be constructed in such a way that they have bounded sectional curvature.
\end{remark}

\begin{remark}
A slight modification of the proof of Proposition~\ref{prop:curvdecay} would show that a complete manifold $(M^6,g,p,\Gamma)$ as in Theorem~\ref{t:milnor6d} can be constructed so that for any $\eta>0$ it satisfies 
\begin{equation}\label{eq:eta_curvaturebis}
|Rm|(q)\le \frac{C(\eta)}{d(p,q)^{2-\eta}}\, , \quad \text{for every $q\in M$}\, ,
\end{equation}
for some constant $C(\eta)>0$.
\end{remark}

	The proof of Proposition \ref{prop:curvdecay} is carried out by induction with respect to the exhaustion $(U_j,p_j)$, where $U_j \subset M_j$. Let $\pi : \tilde M^6 \to M^6$ be the covering map, and $\pi_j: \tilde M^6 \to \tilde M^6/\Gamma_j$ the projection to the intermediate quotient. See Section~\ref{ss:inductiveconstruction} for the relevant notation.
	It is clear that $\pi(U_j) = \pi_j(U_j)$ provides an exhaustion of $M^6$. 
	
	We prove the following claim by induction.
	
	\medskip

	{\bf Claim:} Let $j\in \mathbb{N}$. 
	We can build $(M_j, p_j, \Gamma_j)$ so that
	\begin{itemize}
		\item[(i)] For every $q\in \pi_j(U_j)$ all sectional curvatures are bounded by 
		\begin{equation}\label{claim1}
			|Rm|(q) \le \frac{10^{100}}{d(q,p_j)^{2-\eta}}\, .
		\end{equation}

	    \item[(ii)] For every $q\in \pi_j(A_{k_jr_{j},100k_jr_{j}}(p_j))$ all sectional curvatures are bounded by 
	    \begin{equation}\label{claim2}
	    	|Rm|(q) \le \frac{1}{d(q,p_j)^{2-\eta}}\, .
	    \end{equation}
	\end{itemize}

	\medskip

	It is obvious that we can build $U_1$ so that the Claim is satisfied. We now show that if it is satisfied for $j\in \mathbb{N}$, then it is satisfied also for $j+1$.
	
	As a first step, we build $\hat M_j$ so that $\pi_j(\hat M_j)$ satisfies suitable curvature bounds up to scale $r_{j+1} \gg r_j$. More precisely we have:
	
	\begin{lemma}\label{lemma:curvature}
		Fix $j\in \mathbb{N}$ and $\mu>0$. Assume that \eqref{claim1} and \eqref{claim2} hold true. Then, we can build $\hat M_{j+1}$ such that:
		\begin{itemize}
			\item[(a)] For every $q\in \pi_j(B_{100r_{j+1}}(p_j))$ all sectional curvatures are bounded by
			\begin{equation}\label{claim1.1}
				|Rm|(q) \le \frac{10^{100}}{d(q,p_j)^{2-\eta}}\, .
			\end{equation}
		    
		    \item[(b)] For every $q\in \pi_j(A_{r_{j+1}, 10r_{j+1}}(p_j))$ all sectional curvatures are bounded by
		    \begin{equation}\label{claim1.2}
		    	|Rm|(q) \le \frac{\mu}{r_{j+1}^{2-\eta}}\, .
		    \end{equation}
		\end{itemize}
	\end{lemma}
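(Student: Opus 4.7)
The plan is to build $\hat M_j$ by applying Proposition \ref{p:step2} to the end of $M_j$, choosing the parameters $\hat\epsilon_j$, $R_j$, and $\hat\delta_j$ so as to meet (a) and (b). The key point is to split the region $\pi_j(B_{100r_{j+1}}(p_j))\subset \hat M_j$ into three pieces and estimate curvature on each: (i) the inner piece $\pi_j(U_j)$, on which the metric of $\hat M_j$ agrees with that of $M_j$; (ii) the rescaled twisting neck $r_j X_j$ of Proposition \ref{p:step2}, covering roughly the annular region $\pi_j(A_{r_j, R_j r_j}(p_j))$; and (iii) the outer asymptotic region $\pi_j(A_{R_j r_j, \infty}(p_j))\subset \hat M_j$, which by Proposition \ref{p:step2}(3) is isometric (up to rescaling) to $S^3_{\hat\delta_j R_j r_j}\times A_{R_j r_j, \infty}(0)\subset S^3_{\hat\delta_j R_j r_j}\times C(S^2_{1-\hat\epsilon_j})$.

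On region (i), the inductive hypothesis \eqref{claim1} immediately yields the required bound. On region (iii), the metric is a Riemannian product of a round $3$-sphere of radius $\hat\delta_j R_j r_j$ with a flat cone (the cone tip lies outside this annulus), so the full curvature tensor reduces to that of the $S^3$ factor and satisfies $|\Rm|\equiv C/(\hat\delta_j R_j r_j)^2$. Given the data $\mu$ and $\eta$, we select $\hat\delta_j$ small enough so that
\begin{equation}
\hat\delta_j R_j r_j \ge \mu^{-1/2}\, r_{j+1}^{1-\eta/2},
\end{equation}
which simultaneously yields (b) at scale $r\asymp r_{j+1}$ and (a) on (iii), since $d(q,p_j)\le 100 r_{j+1}$ there. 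Note that this choice is free: we may take $\hat\delta_j$ as small as desired, which is part of the flexibility built into Proposition \ref{p:step2}.

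The heart of the argument is the curvature estimate on the twisting neck (ii). Here the metric interpolates between the two product forms $S^3_{\delta_j r_j}\times C(S^2_{1-\epsilon_j})$ and $S^3_{\hat\delta_j R_j r_j}\times C(S^2_{1-\hat\epsilon_j})$ using the one-parameter family of $S^1$-invariant, positively Ricci-curved metrics $g_t$ on $S^3\times S^2$ provided by Theorem \ref{t:equivariant_mapping_class_S3xS2}. The family $g_t$ on the compact $S^3\times S^2$ has $|\Rm|\le C(k)$, where $k=k_{\le j-1}$. When the interpolation parameter $t$ is arranged to vary on a scale comparable to the radial distance $r\equiv d(p_j,\cdot)$, i.e.\ stretched uniformly across the full annulus $A_{r_j,R_j r_j}(p_j)$, a direct computation of the resulting warped/fiber-bundle metric gives the schematic estimate $|\Rm|\le C(k)/r^2$ in the neck. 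The $r^{-\eta}$ slack in the target bound $10^{100}/r^{2-\eta}$ is then used to absorb the constant $C(k)$: the bound holds as soon as $r\ge r_j\ge (C(k)/10^{100})^{1/\eta}$, a mild lower bound which can be guaranteed by pushing the induction far enough at the initial steps.

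The main obstacle is the curvature computation in the neck (ii), which must track how the Ricci-preserving interpolation of Theorem \ref{t:equivariant_mapping_class_S3xS2} is grafted onto a slowly conical base; cross terms between the $S^3\times S^2$ fibers, the interpolation parameter, and the base cone each contribute to the curvature tensor and require careful bookkeeping to align with the polynomial decay rate $r^{-(2-\eta)}$. The calculation is in the spirit of the analogous estimate in \cite[Section 7]{BrueNaberSemolaMilnor1} and adapts once Theorem \ref{t:equivariant_mapping_class_S3xS2} replaces its seven-dimensional counterpart. The essential input is that the family $g_t$ has uniformly bounded curvature on the compact $S^3\times S^2$, so that the twisting scale $R_j r_j$ controls the neck curvature to the desired order.
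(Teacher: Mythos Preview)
Your decomposition into regions (i)--(iii) is the right starting point, and the inductive hypothesis handles (i). But there are two genuine gaps.

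\textbf{First, the treatment of region (iii) is incorrect.} You write that you may ``select $\hat\delta_j$ small enough so that $\hat\delta_j R_j r_j \ge \mu^{-1/2}\, r_{j+1}^{1-\eta/2}$.'' Two things are wrong here. In Proposition~\ref{p:step2} the quantities $\hat\delta$ and $R$ are \emph{outputs} determined by $(\epsilon,\hat\epsilon,\delta,k)$, not free parameters you may adjust at will. Moreover, even granting freedom, the inequality goes the wrong way: lowering $\hat\delta_j$ would shrink the sphere radius $\hat\delta_j R_j r_j$ and \emph{increase} the curvature $\sim (\hat\delta_j R_j r_j)^{-2}$ in the outer product region, which is the opposite of what (b) requires.

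\textbf{Second, and more fundamentally, the paper's proof is not merely an estimate on the existing neck of Proposition~\ref{p:step2}; it is a \emph{modification} of that neck.} The paper observes that within the original neck (in the notation of \cite[Section~7]{BrueNaberSemolaMilnor1}) the region $X_3$ must shrink the $S^3$-factor from size $\delta_j$ down to the much smaller $\delta(k_{\le j})$ needed for the twisting, and this occupies a number of scales that is \emph{not} uniformly bounded in $j$. The paper therefore alters the construction: it only shrinks the $S^3$ down to $\delta_j/10$ in $X_1\cup X_2\cup X_3$ (at most $10$ scales), then \emph{inserts a large pure-cone annulus} $A_{10,10R}(O)\subset C(S^3_{\delta_j/10}\times S^2_{1/8})$ on which the curvature decays honestly quadratically. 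By taking $R=R(\mu)$ large, the sub-critical estimate improves to $|\Rm|\le \mu/d^{2-\eta}$ at the second end of this cone, which then absorbs the cost of the remaining transition to $\delta(k_{\le j})$ and the twisting (whose scale-invariant curvature is bounded by a constant depending only on $k_{\le j}$). This inserted cone region is precisely the mechanism that makes the $S^3$-radius at the outer end large enough for (b); without it, neither $R_j$ nor the final sphere size is at your disposal, and your argument for (b) does not close.

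Your claim in region (ii) that $|\Rm|\le C(k)/r^2$ with $C$ depending only on $k=k_{\le j-1}$ is also too quick: the original $X_3$ contributes a scale-invariant factor governed by $\delta_j/\delta(k_{\le j})$, which is exactly the difficulty the paper isolates and then circumvents by the modification above.
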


	Before proving Lemma~\ref{lemma:curvature}, let us first check how to conclude the proof of Proposition \ref{prop:curvdecay} assuming its validity.\\
	
	By construction, $M_{j+1}$ is obtained by gluing $k_{j+1}$ distinct copies of $\hat M_j$ to a slight perturbation of $S^3_{\delta_{j+1}r_{j+1}} \times C(S^2_{1-\epsilon_{j+1}})$ after removing $k_{j+1}$-copies of $S^3 \times D^3$, as explained in section \ref{s:outline_milnor:inductive}. The errors introduced by this perturbation are uniformly controlled, hence we will neglect them for the sake of this argument.
	
	A lift $\tilde q\in M_{j+1}$ of any $q\in \pi(U_{j+1})$ either belongs to a copy of $\hat M_j$ or to $S^3_{\delta_{j+1}r_{j+1}} \times C(S^2_{1-\epsilon_{j+1}})$. In the first case Lemma \ref{lemma:curvature} (a) gives the correct curvature estimate.\\
	If $\tilde q$ does not belong to one of the $k_{j+1}$ copies of $\hat M_j$, then $\tilde q \in S^3_{\delta_{j+1}r_{j+1}} \times C(S^2_{1-\epsilon_{j+1}})$ and 
	\begin{equation}\label{z12}
		|Rm|(q) \le \frac{10}{r_{j+1}^2\delta_{j+1}^2} \, , 
	\end{equation}
	provided $\epsilon_{j+1}\le 1/5$. On the other hand, the gluing region $A_{r_{j+1},10r_{j+1}}(p_j)\subset \hat M_j$ is isometric to an annulus of $S^3_{\delta_{j+1}r_{j+1}} \times C(S^2_{1-\epsilon_{j+1}})$. Hence 
	\begin{equation}
	|Rm|(x)\ge \frac{10^{-10}}{(r_{j+1} \delta_{j+1})^2}\, ,
	\end{equation}
	for any $x\in A_{r_{j+1},10r_{j+1}}(p_j)$.\\
	By Lemma \ref{lemma:curvature} this curvature must be bounded by $\frac{\mu}{r_{j+1}^{2-\eta}}$. Since $d(\pi_{j+1}(p_{j+1}),q) \le k_{j+1} r_{j+1}$, up to choosing $\mu\ll 1$, \eqref{z12} allows us to complete the proof of the inductive step and hence of Proposition~\ref{prop:curvdecay}. $\qed$

	\begin{proof}[Proof of Lemma \ref{lemma:curvature}]
	
	The key idea of the proof is to slightly modify the construction of the neck region in \cite[Section 7]{BrueNaberSemolaMilnor1} by increasing the number of scales where the space is isometric to a cone over $S^3 \times S^2$. In this region, the curvature decays quadratically and this will arbitrarily improve the sub-critical curvature estimate \eqref{eq:eta_curvature}. 
	
	Let us begin by recalling the notation from \cite[Section 7]{BrueNaberSemolaMilnor1}. Our (scaling invariant) neck region $X$ is obtained by gluing together seven different pieces. For the sake of this proof, it is enough to group them into two components:
	\begin{itemize}

		\item $X_1\cup X_2 \cup X_3$ is an annular region whose first end is isometric to an annulus in $S^3_{\delta_j} \times C(S^2_{1-\epsilon_j})$ (this is where the gluing with $M_j$ takes place), and the second end is isometric to an annulus in $C(S^3_{\delta(2^j)} \times S^2_{1/8})$. The parameter $\delta(k_{\le j})$ is chosen small enough to accommodate the next gluing, and it is determined by the application of \cite[Lemma 7.2]{BrueNaberSemolaMilnor1} with $k=k_{\le j}$.
		
		\item $X_4 \cup X_5 \cup X_6 \cup X_7$ is the annular region where the equivariant twisting takes place, its second end is isometric to an annulus in $S^3_{R_j\hat \delta_j} \times C(S^2_{1-\hat \epsilon_j})$.
	\end{itemize}
	
	We notice that the transition region $X_1\cup X_2$ requires a uniformly bounded number of scales, independent of $j$. On the other hand, the transition region $X_3$ requires a number of scales depending on $\delta_j/\delta(k_{\le j})$, which is not uniformly bounded a priori, and this might be problematic for controlling the curvature.\\

	To get the sought curvature bounds we need to modify the construction as follows. We build the first transition region $X_1 \cup X_2 \cup X_3$ from $S^3_{\delta_j} \times C(S^2_{1-\epsilon_j})$ to $C(S^3_{\delta} \times S^2_{1/8})$ by choosing $\delta= \delta_j/10$, which might be much larger than the parameter $\delta(2^j)$ in the original construction. This can be done in at most $10$ scales and worsening the curvature by at most a factor of $100$. 
	
	Observe that by the inductive assumption \eqref{claim2}, we have an improved estimate on the curvature of the first end of $X_1 \cup X_2 \cup X_3$. Now, on the second end of $X_1 \cup X_2 \cup X_3$ the space is isometric to an annulus in  $C(S^3_{\delta_j/10} \times S^2_{1/8})$ and still satisfies the correct non-scale-invariant curvature estimate as in \eqref{claim1}. In particular, we can glue in an annulus $A_{10, 10R}(O) \subset C(S^3_{\delta_j/10} \times S^2_{1/8})$. The resulting space $X_1 \cup X_2 \cup X_3 \cup Y$ satisfies the non-scale-invariant curvature estimate. More than this, if $R=R(\mu)$ is big enough, close to its second end the estimate improves arbitrarily to
	 \begin{equation}
	 	|Rm|(q) \le \frac{\mu}{d(p,q)^{2-\eta}} \, ,
	 \end{equation}
	for any $\mu>0$, as the curvature has the faster quadratic decay on a cone.\\
	 We pick $\mu$ small enough in order to be able to perform the remaining gluing while keeping the curvature estimate as in \eqref{eq:eta_curvature}. This can be done because the curvature on the next annular region is scaling invariantly bounded by a constant depending only on $k=k_{\le j}$. We remark that this region should include an additional transition from $C(S^3_{\delta_j/10} \times S^2_{1/8})$ to $C(S^3_{\delta(k_{\le j})} \times S^2_{1/8})$ with respect to the original construction. However, this transition can be handled with techniques analogous to those entering the other steps of the construction.
	\end{proof}

	\vspace{.3cm}

\subsection{Volume of balls}

Our goal in this section is to show that the counterexamples to the Milnor conjecture can be constructed as to have volume of the unit balls bounded away from zero and to discuss their volume growth.  That is, we will prove Theorem \ref{t:milnor6d}.2-\ref{t:milnor6d}.4 .

\subsubsection{Unit scale non-collapsing}

Let us begin by addressing Theorem \ref{t:milnor6d}.4:

\begin{proposition}\label{prop:1noncollapsing}
The complete manifold $(M^6,g)$ as in Theorem~\ref{t:milnor6d} can be constructed so that it satisfies
\begin{equation}
\inf_{q\in M}\mathrm{vol}(B_1(q))>0\, .
\end{equation}
\end{proposition}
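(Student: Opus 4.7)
The plan is to reduce the unit-scale volume bound to a uniform lower bound on the injectivity radius of $M$, which in turn is equivalent to a uniform lower bound on the displacement $d(\tilde q,\gamma\cdot\tilde q)$ for $\tilde q\in\tilde M$ and $e\ne\gamma\in\Gamma$. Since $\pi:\tilde M\to M$ is a local isometry, an injectivity radius bound $\mathrm{inj}(q)\ge\iota$ on $M$ yields $\mathrm{vol}(B_1(q))\ge\mathrm{vol}(B_{\min\{1,\iota\}}(\tilde q))$, which is bounded below by Bishop-Gromov on $\tilde M$, provided unit-scale non-collapsing on the universal cover has been established. The latter follows because every unit ball in $\tilde M$ embeds isometrically in a nearly-product region $S^3_\rho\times D^3$ with $\rho\ge\inf_j\delta_j r_j$, a quantity that will be controlled from below by the inductive parameter choice.

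Next, I would locate the minimum displacement. By condition (I3) of the induction, near the central $S^3$ factor of the scale-$j$ piece $M_j\subset\tilde M$, the generator $\gamma_j$ acts by pure Hopf rotation of angle $2\pi/|\gamma_j|$ on a sphere of radius $\delta_jr_j$, so $d(\tilde q,\gamma_j\cdot\tilde q)\approx 2\pi\delta_jr_j/|\gamma_j|$ at such points. Every nontrivial element of $\Gamma_j$ is of the form $\gamma_j^m$ with $1\le m<|\gamma_j|$ and realizes displacement at least $2\pi\delta_jr_j/|\gamma_j|$ there. Away from the central spheres, the $S^2$-rotation on the cone factor (and the intertwining with the gluing structure) contributes strictly positive additional displacement. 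Consequently the global infimum of nontrivial displacements is, up to a fixed constant, $\inf_j \delta_jr_j/|\gamma_j|$, and the problem reduces to enforcing this quantity to stay positive.

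Using the recursions $\delta_{j+1}r_{j+1}=\hat\delta_jR_jr_j$ and $|\gamma_{j+1}|=k_{j+1}|\gamma_j|$, one computes
\[
\frac{\delta_{j+1}r_{j+1}}{|\gamma_{j+1}|}=\frac{\hat\delta_jR_j}{k_{j+1}\delta_j}\cdot\frac{\delta_jr_j}{|\gamma_j|},
\]
so it suffices to arrange $\hat\delta_jR_j\ge k_{j+1}\delta_j$ at each inductive step. The twisting space $X$ produced by Proposition \ref{p:step2} can be modified by inserting additional cone transition regions between its two ends, in the same spirit as the modification described in the proof of Lemma \ref{lemma:curvature}; concretely, inserting $\approx\log_2 k_{j+1}$ extra cone scales multiplies $R_j$ by a factor at least $k_{j+1}$ while leaving the ratio $\hat\delta_j/\delta_j$ comparable to $1$.

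I expect the main obstacle to be the simultaneous compatibility of this parameter adjustment with the constraints already imposed by Propositions \ref{p:step1}--\ref{p:step3} and by Proposition \ref{prop:curvdecay}: the curvature decay analysis and the non-collapsing analysis push the scales in different directions, and one must choose the sequences $\{\epsilon_j,\hat\epsilon_j,\delta_j,R_j\}$ at each step to satisfy the most restrictive of the resulting bounds. Once this bookkeeping is carried out, the displacement lower bound yields Proposition \ref{prop:1noncollapsing}.
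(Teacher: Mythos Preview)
Your approach is essentially the same as the paper's: reduce to a uniform displacement lower bound for the $\Gamma$-action on $\tilde M$ together with unit-scale non-collapsing on $\tilde M$, and arrange both via a parameter condition of the form $\delta_j r_j/|\gamma_j|\ge c$ (the paper records this as $\delta_{j+1}r_{j+1}/k_{\le j}\ge 1$ and simply asserts it can be met). Your recursion analysis and cone-insertion trick make explicit what the paper leaves as a one-line remark, and they parallel the modification already used in Lemma~\ref{lemma:curvature}.

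Two small points to clean up. First, phrase the reduction via displacement rather than injectivity radius: a bound $d(\tilde q,\gamma\cdot\tilde q)\ge 2\iota$ for all $\gamma\ne e$ makes $\pi|_{B_\iota(\tilde q)}$ an isometric embedding, so $\mathrm{vol}(B_1(q))\ge\mathrm{vol}(B_\iota(\tilde q))$; the injectivity radius of $M$ also involves conjugate points and is neither needed nor equivalent to the displacement bound. Second, your claim that every unit ball in $\tilde M$ lies in a nearly-product region $S^3_\rho\times D^3$ fails in the twisting neck, specifically in the piece $X_4$ where the cross-section metric on $S^3\times S^2$ is the genuinely non-product family of Theorem~\ref{t:equivariant_mapping_class_S3xS2}. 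The paper handles this region separately: for each $j$ the neck metrics form a compact family, so once $r_{j+1}$ is chosen large enough the rescaled neck has a definite unit-ball volume bound.
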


\begin{proof}
We argue by induction as in the proof of Proposition \ref{prop:curvdecay} and borrow the notation introduced therein.

We notice that the parameters in the constructions can be chosen so that the displacement of any point with respect to any isometry $\gamma\in\Gamma$ is greater or equal to $2$.  In particular, it is enough to show the result on the universal cover.\\
Recall that $M_{j+1}$ is obtained by gluing $k_{j+1}$ copies of $\hat M_j$ into a slight perturbation of $S^3_{\delta_{j+j}r_{j+1}} \times C(S^2_{1-\epsilon_{j+1}})$ after removing $k_{j+1}$-copies of $S^3 \times D^3$, see Proposition~\ref{p:step3}
 for the precise construction. If $\tilde q$ does not belong to one of the copies of $\hat M_j$, then $\mathrm{vol}(B_1(q))>\frac{1}{100}$ provided $\delta_{j+1} r_{j+1}/k_{\le j} \ge 1$. We can always make the latter choice of parameters.\\
If $\tilde q$ belongs to one of the copies of $\hat M_j$, then we distinguish two cases. When $\tilde q \in M_j \subset \hat M_j$, the conclusion follows by inductive assumption. When $\tilde q$ belongs to the neck region $\hat M_j \setminus M_j$ we use that the metric is explicit everywhere, except in the region that was denoted by $X_4$ in \cite[Section 7]{BrueNaberSemolaMilnor1}. However, also in $X_4$ the volume of unit balls is uniformly bounded below provided $r_{j+1}\gg 1$ is big enough.

\end{proof}

\subsubsection{Volume of big balls}

The goal of the next proposition is to show that the counterexamples to the Milnor conjecture can be constructed so that their volume growth is almost maximal.  We will prove Theorem \ref{t:milnor6d}.2 and Theorem \ref{t:milnor6d}.3 :

\begin{proposition}\label{prop:volumegrowth}
For every $\eta>0$, the complete manifold $(M^6,g,p)$ as in Theorem~\ref{t:milnor6d} can be constructed so that it satisfies
\begin{equation}\label{eq:uppervolgrowth}
\mathrm{vol}(B_{s_i}(p))=s_i^{6-\eta}\, ,
\end{equation}
for some sequence $s_i\to \infty$, and 
\begin{equation}\label{eq:lowervolgrowth}
\mathrm{vol}(B_{t_i}(p))=t_i^{3+\eta}\, ,
\end{equation}
for some sequence $t_i\to \infty$.

An analogous statement holds for the universal cover $(\tilde{M},g,\tilde{p})$.
\end{proposition}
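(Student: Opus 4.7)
The plan is to exploit the fact that, in the annular regions introduced by the inductive construction of Section~\ref{s:outline_milnor:inductive}, the metric is explicitly of product type, making a direct volume computation elementary. Indeed, by assumption (I3) and by the model space $\cB(\epsilon,\delta)$ of Proposition~\ref{p:step1}, large portions of $M$ are isometric to annular regions in a product of the form
\begin{equation}
  S^3_{\delta_j r_j} \times C(S^2_{1-\epsilon_j})\, .
\end{equation}
On such a product, the volume of the geodesic ball of radius $R$ centered at a point $(x_0,0)$ obeys the two-regime estimates
\begin{equation}
  \mathrm{vol}(B_R) \asymp R^6 \text{ for } R \ll \delta_j r_j\, , \qquad
  \mathrm{vol}(B_R) \asymp (\delta_j r_j)^3 R^3 \text{ for } R \gg \delta_j r_j\, .
\end{equation}
Consequently, the continuous, non-decreasing volume function $V(r) := \mathrm{vol}(B_r(p))$ transitions from an almost $r^6$-growth regime, when $r$ is comparable to the sphere scale $\delta_j r_j$ at level $j$, to an almost $r^3$-growth regime when $r$ is much larger than $\delta_j r_j$. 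The same dichotomy persists throughout each annular product region of the inductive construction.

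The strategy is to show, by a suitable choice of the parameters $(k_j, r_j, \delta_j, R_j)$, that the functions $r\mapsto V(r)/r^{6-\eta}$ and $r\mapsto V(r)/r^{3+\eta}$ both cross the value $1$ along sequences of scales tending to infinity, so that the intermediate value theorem produces the required $s_i, t_i \to \infty$. Concretely, arrange the construction so that $\delta_j r_j \to \infty$ (already forced by $\delta_{j+1}r_{j+1} = \hat\delta_j R_j r_j$ with the inductive lower bound from Proposition~\ref{prop:1noncollapsing}) and so that $\log k_j / \log(\delta_j r_j) \to \infty$, which is permitted since $k_j$ is free in Step 3. Under these choices, at scales $r$ slightly above $\delta_j r_j$ one has
\begin{equation}
 V(r)/r^{6-\eta} \asymp r^\eta \to \infty\, ,
\end{equation}
while at scales close to $k_j r_j$ one has
\begin{equation}
 V(r)/r^{6-\eta} \asymp (\delta_j r_j / r)^3 \cdot r^\eta \to 0\, .
\end{equation}
A symmetric argument handles $V(r)/r^{3+\eta}$: it diverges near the sphere scale, while it tends to $0$ at any scale $r \gg (\delta_j r_j)^{3/\eta}$, which for $\eta$ fixed can be placed inside the product region $[\delta_j r_j, k_j r_j]$ by taking $k_j$ sufficiently large. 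Continuity of $V$ and the intermediate value theorem then produce the desired sequences $s_i, t_i \to \infty$ satisfying \eqref{eq:uppervolgrowth} and \eqref{eq:lowervolgrowth}.

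For the universal cover $\tilde M$ the argument is identical: each annular product region lifts isometrically, balls around $\tilde p$ in those regions are computed from the same product model, and the $\Gamma_j$-action is irrelevant for the radii at which the crossings occur. The main technical obstacle is that the growth condition $\log k_j \gg \log(\delta_j r_j)$ must be made simultaneously compatible with the curvature decay of Proposition~\ref{prop:curvdecay} and with the unit-scale non-collapsing of Proposition~\ref{prop:1noncollapsing}. However, both of those estimates leave ample freedom in the choice of $k_j$ and $R_j$: increasing $k_j$ only enhances the curvature decay rate (since additional rescaling scales can be inserted in the neck region as in the proof of Lemma~\ref{lemma:curvature}), and the non-collapsing requirement only imposes a lower bound of the form $\delta_{j+1} r_{j+1}/k_{\le j} \ge 1$, which is easily preserved.
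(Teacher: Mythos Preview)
Your argument has a genuine gap in the $r^{6-\eta}$ direction. You compute volumes in the product $S^3_{\delta_j r_j}\times C(S^2_{1-\epsilon_j})$ and invoke the regime $R\sim\delta_j r_j$ to get $V(r)\asymp r^6$. But the product region supplied by (I3) is only $S^3_{\delta_j r_j}\times A_{10^2 k_j r_j,\infty}(0)$; it begins at radius $\sim k_j r_j\gg r_j\gg \delta_j r_j$. At scales $r\lesssim k_j r_j$ the ball $B_r(p)$ sits inside $U_j$, whose geometry is the entire previous inductive construction and is \emph{not} the explicit product. In particular your claim ``at scales $r$ slightly above $\delta_j r_j$ one has $V(r)/r^{6-\eta}\asymp r^\eta$'' is unjustified: nothing in the construction forces $V(r)\asymp r^6$ at such scales. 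In fact the tangent cone analysis of Section~\ref{ss:tangentcones} shows that all tangent cones at infinity of both $M$ and $\tilde M$ are at most three--dimensional, so there is no sequence of large radii along which balls centred at $p$ look six--dimensionally Euclidean in the way your argument requires. The regions from (I3) can only produce the $r^3$ regime, never the $r^6$ one.

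The paper obtains the $r^{6-\eta}$ behaviour from a different source: the \emph{conical} pieces $C(S^3\times S^2)$ that occur inside the neck regions of Proposition~\ref{p:step2} (between the pieces called $X_4$ and $X_5$ in \cite[Section 7]{BrueNaberSemolaMilnor1}). Along an annulus in such a cone the volume genuinely grows like $r^6$, so $V(r)/r^{6-\eta}$ increases like $r^\eta$ there. The construction is then modified, exactly as in the proof of Lemma~\ref{lemma:curvature}, by inserting arbitrarily long annuli isometric to pieces of $C(S^3\times S^2)$ (and, for the $r^{3+\eta}$ side, arbitrarily long annuli in $S^3\times C(S^2)$ at the end of $X_7$). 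On the quotient $M$ these become annuli in $C\big((\dZ_{k_{\le i}}\backslash S^3)\times S^2\big)$ and in $(\dZ_{k_{\le i}}\backslash S^3)\times C(S^2)$ respectively, because in those neck regions the $\Gamma_i$ action is already the pure $(1,0)$ Hopf rotation. Making these inserted annuli long enough at each step forces $V(r)/r^{6-\eta}$ and $V(r)/r^{3+\eta}$ to oscillate across $1$, and then your intermediate value argument applies. Your attempt to manufacture both regimes out of a single product region, and your growth condition $\log k_j\gg\log(\delta_j r_j)$, do not substitute for this missing conical ingredient.
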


\begin{proof}
We provide an argument only for the volume growth of $(M,g)$, the argument for the universal cover being completely analogous.  We borrow again the notation from \cite[Section 7]{BrueNaberSemolaMilnor1}. \\

Notice that $(M,g)$ contains domains isometric to annuli $O_i,W_i$ with $O_i\subset C\left(\left({\dZ_{k_{\le i}}\backslash S^3_{\lambda_i}}\right)\times S^2_{\xi_i}\right)$ and $W_i\subset   \left({\dZ_{k_{\le i}}\backslash S^3_{\lambda_i}}\right)\times C(S^2_{1-\eta_i})$, where $\lambda_i,\eta_i,\xi_i,\eta_i>0$, for every $i\in\dN$. Here $\dZ_{k_{\le i}}\backslash S^3$ denotes the quotient of $S^3$ with respect to the action of $\dZ_{k_{\le i}}\subset S^1$, where $S^1$ acts by left Hopf rotation.  At the level of the universal cover $\tilde{M}$, these regions correspond to annuli between the regions $X_4$ and $X_5$ and an annulus at the end of the regions $X_7$, respectively. Notice in particular that the action of $\Gamma_i$ is by pure Hopf-rotation on the $S^3$ factor in those regions.
Moreover, in these annuli a straightforward computation shows the volume growth is $\sim r^6$ and $\sim r^3$ respectively, up to constant multiplicative coefficients. 

It is then elementary to show that there exist sequences $s_i,t_i\to \infty$ such that \eqref{eq:uppervolgrowth} and \eqref{eq:lowervolgrowth} hold, provided that the annular regions $O_i$ and $W_i$ above are chosen to be sufficiently large. 
This can be accomplished with a slight modification of the construction in \cite[Section 7]{BrueNaberSemolaMilnor1}, analogous to the one discussed in the proof of Lemma \ref{lemma:curvature} above. More precisely, we can insert an arbitrarily large region where $\tilde{M}$ is isometric to an annulus in $C(S^3\times S^2)$ between the regions $X_4$ and $X_5$ at every step of the inductive construction. Analogously, we can insert an arbitrarily large region where $\tilde{M}$ is isometric to an annulus in $S^3\times C(S^2)$ at the end of $X_7$ at each step of the inductive construction.
\end{proof}

\begin{remark}
The first part of the statement in Proposition~\ref{prop:volumegrowth} above should be compared with a result of B.-Y. Wu (see \cite{Wu}) saying that if $\alpha\leq \alpha(n)$ is such that $(M^n,g)$ has $\Ric\ge 0$ and the limit
\begin{equation}
\lim_{r\to \infty}\frac{\mathrm{vol}(B_r(p))}{r^{n-\alpha}}
\end{equation} 
exists and is strictly positive, then $\pi_1(M)$ is finitely generated. The effect of Proposition~\ref{prop:volumegrowth} is to show that the limit in the assumptions of \cite[Theorem 1.2]{Wu} cannot be replaced by a limsup.\\
\end{remark}

\subsubsection{\bf Tangent Cones at Infinity of $\tilde M$ and $M$}\label{ss:tangentcones}

Let us consider a sequence of radii $s_j\to \infty$ and understand the limits of $(s_j^{-1}\tilde M,p,\Gamma)$ and $(s_j^{-1}M,p)$.  After passing to subsequences (and reindexing) we can break ourselves down into various cases depending on how $s_j$ compares to our naturally defined scales $r_j$ from before. 

As we shall discuss below, the family of tangent cones that can appear will be analogous to those appearing for the $7$-dimensional counterexamples to the Milnor conjecture constructed in \cite{BrueNaberSemolaMilnor1}. The only difference, besides the obvious changes of dimensions, will be that the cross sections of some of the tangent cones at infinity will be suspensions over circles $S^2/\dZ_k$, rather than being lens spaces $S^3/\dZ_k$.

\subsubsection{The scales $s_j=r_j$}

Let us begin with the base case of understanding the sequence  $(r_j^{-1}\tilde M, p, \Gamma)$ on the universal cover.  We have determined that $\tilde M$ looks very close to $S^3\times \dR^3$ at these scales with (scale invariantly) shrinking sphere factor.  In particular, we have that geometrically the tangent cone at infinity along this sequence gives $r_{j}^{-1}\tilde M\to \dR^3$.  The action of $\gamma_j$ at scale $r_j$ is visible as a rotation by angle $2\pi/k_j$ of the $\dR^3$ factor with respect to a basepoint distance $k_j$ away. Therefore to understand the equivariant limit we need to break ourselves into two cases.  Namely, after passing to subsequences either $k_j$ converges or not.\\

\subsubsection{The scales $s_j=r_j$ with $k_j\to k<\infty$}\label{sss:kjtok}

In this case the action of $\gamma_j$ looks like a rotation with respect to a point distance $kr_j$ away from $p$, and so we have that $(r_j^{-1}\tilde M, p, \Gamma)\to (\dR^3,p_\infty,\dZ_k)$ where $\dZ_k$ is acting by rotation around the origin and $p_\infty$ is a point distance $k$ from the origin.  We get that the quotient space 
\begin{align}
	(r_j^{-1}M,p)\to (C(S^2_1/\dZ_k),p_\infty)
\end{align}
limits to a cone over the spherical suspension over a circle of length $2\pi/k$. This cone is isometric to $\dR\times C(S^1_{1/k})$. The basepoint $p_\infty$ of this limit is again a point distance $k$ from the cone point.

\subsubsection{The scales $s_j=r_j$ with $k_j\to\infty$}

In this case the action of $\gamma_j$ is looking increasingly like a translation by $\dZ$, and we get that $(r_j^{-1}\tilde M, p, \Gamma)\to (\dR^3,0,\dZ)$ where $\dZ$ acts by unit translation.  The quotient space in this case limits 
\begin{align}
	r_j^{-1}M\to \dR^2\times S^1\, .
\end{align}

\subsubsection{The scales $r_j<s_j<<k_jr_j$ with $k_j\to \infty$} In the case that $k_j\to k$ remains bounded there is no distinction between this case and the last.  Therefore, we are only concerned with the case where we have some subsequence for which $k_j\to \infty$.  In this situation note with $\frac{s_j}{r_j},\frac{k_jr_j}{s_j}\to \infty$ that our $\dZ$ action is looking increasingly like an $\dR$ action.  Our limit in this case becomes $(s_j^{-1}\tilde M, p, \Gamma)\to (\dR^3,0,\dR)$, where $\dR$ is acting by translation.  Our quotient space is therefore limiting
\begin{align}
	s_j^{-1}M\to \dR^2\, .
\end{align}

\subsubsection{The scales $s_j\approx k_jr_j$ when $k_j\to \infty$}  Note the action of $\gamma_j$ at these scales looks like a rotation by angle $2\pi/k_j$.  In particular, we get that $(s_j^{-1}\tilde M, p, \Gamma)\to (\dR^3,p_\infty,S^1)$, where $S^1$ is a rotation around the origin.  Our basepoint is now roughly distance $1$ from the center of the rotation.  In particular our quotient limit is given by
\begin{align}
	(r_j^{-1}M,p)\to (C([0,\pi]),p_\infty)\, ,
\end{align}
where we denoted by $C([0,\pi])$ the cone over the interval, which is isometric to the half-plane $\dR^2_{+}$.

\subsubsection{The scales $k_jr_j<<s_j << r_{j+1}$ when $k_j\to k<\infty$} We discussed that at scale $s_j\approx k_jr_j$ we have $s_j^{-1}\tilde M$ looks like $\dR^3=C(S^2_1)$.  As $\frac{s_j}{k_j r_j}$ increases our cross section sphere $S^2_s$ begins to decrease in radius until it looks like a half ray.  Therefore we get the possible limits $(s_j^{-1}\tilde M,p,\Gamma)\to (C(S^2_s),0,\dZ_k)$ for all $0\leq s\leq 1$.  In the case when $\frac{s_j}{k_jr_j}$ becomes sufficiently large we get that the limit is a half ray with the trivial action.  Our quotient limits in this range are therefore
\begin{align}
	(s_j^{-1}M,p)\to (C(S^2_s/\dZ_k),p_\infty)\, ,
\end{align}
for all $0\leq s\leq 1$.\\

\subsubsection{The scales $s_j\to r_{j+1}$}
As the scale $s_j$ continues to increase to $r_{j+1}$, we have that the half ray reopens up so that we again have $s_j^{-1}\tilde M\approx \dR^3$.  However, as it reopens the $\Gamma_j$ is now a trivial action.   As we approach scale $r_{j+1}$ a new $\gamma_{j+1}$ action appears and we repeat the above process.\\

In the case $\Gamma=\dQ/\dZ$ we can choose $k_j$ so that every $k\in \dN$ appears infinitely often.  Consequently, all of the cones
\begin{align}
	M_\infty \equiv C(S^2_s/\dZ_{k})\, ,
\end{align}
appear as tangent cones at infinity for all $s\in [0,1]$ and $k\in \dN$. \\

The last point to remark on is that there are some tangent cones at infinity which are metric cones, though the pointed limit does not have the cone point as the base point, as it happens for the examples constructed in \cite{BrueNaberSemolaMilnor1}. In this regard, recall that in \cite{Sormanilinear} it was proved that for a manifold with $\Ric\ge 0$ and infinitely generated fundamental group some tangent cones at infinity need to not be polar {\it with respect to} the base point. Note also that the tangent cones isometric to $\R^2\times\mathbb{S}^1$ are not polar as well.\\

\section{Preliminaries on Riemannian submersions}\label{s:preliminaries}

In this Section we record some background material about Riemannian submersions that will turn out to be important for the proof of Theorem \ref{t:equivariant_mapping_class_S3xS2}.

\subsection{Riemannian Submersions}\label{ss:prelim:submersions}

 Our setup is that we have Riemannian manifolds $(M^n,g)$ and $(B,g_b)$ together with a Riemannian submersion
\begin{align}
	\pi: M\stackrel{F}{\longrightarrow} B\, .
\end{align}

Throughout we will let $U,V,..$ denote vertical vector fields on $M$, so $U,V\in TF\equiv \mathcal V\subseteq TM$, and we will let $X,Y,..$ denote horizontal vector fields on $M$, so $X,Y\in T^\perp F\equiv \mathcal H\subseteq TM$. The integrability tensor of the Riemannian submersion is defined by
\begin{equation}
	A_{E_1}E_2:=\mathcal{H}\nabla_{\mathcal{H}E_1}\mathcal{V}E_2+\mathcal{V} \nabla_{\mathcal{H}E_1}\mathcal{H}E_2\, ,
\end{equation}
where our notation $\mathcal V E$ and $\mathcal H E$ denote the projections of $E$ to the corresponding subspaces, see \cite[Definition 9.20]{Besse}.  Recall that if $X,Y$ are horizontal vector fields then
\begin{equation}
	A_{X}Y=\frac{1}{2}\mathcal{V}[X,Y]\, .
\end{equation}

For the proposition below we refer the reader to O' Neill \cite{Oneill} (see also \cite[Proposition 9.36]{Besse}):

\begin{proposition}[Ricci curvature for Riemannian submersions]\label{prop:Rictotgeo}
	Let $\pi:(M,g)\to (B,g_B)$ be a Riemannian submersion with totally geodesic fibers $F$. Then 
	\begin{align}
		\Ric_M(U,V)=&\, \Ric_F(U,V)+(AU,AV)\, ,\\
		\Ric_M(U,X)=&\, \left({\rm div}_BA[X],U\right) \, ,\\
		\Ric_M(X,Y)=&\, \Ric_B(X,Y)-2(A_{X},A_{Y})\, ,
	\end{align}
	where $ \Ric_F$ stands for the Ricci curvature of the fiber with the induced Riemannian metric and $\Ric_B$ is the Ricci curvature of the base, understood as a horizontal tensor on $M$.  
\end{proposition}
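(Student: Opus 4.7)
The plan is to exploit O'Neill's two fundamental tensors $A$ and $T$ of the Riemannian submersion $\pi:M\to B$, together with the assumption that the fibers are totally geodesic, which is exactly the statement that $T\equiv 0$. Recall that $T_{E_1}E_2:=\cH\nabla_{\cV E_1}\cV E_2+\cV\nabla_{\cV E_1}\cH E_2$ measures the second fundamental form of the fibers, and that the $A$ tensor is as defined in Section \ref{ss:prelim:submersions}. The starting point is the three O'Neill identities expressing sectional curvatures of $M$ in terms of those of the base, those of the fiber, and the tensors $A$, $T$, $\nabla A$, $\nabla T$ (see \cite[9.28]{Besse}). Under $T\equiv 0$ these simplify dramatically, and the proposition is obtained by suitable traces.

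First I would choose, around a given point $q\in M$, a local orthonormal frame $\{V_\alpha\}_{\alpha=1}^{k}\cup \{X_i\}_{i=1}^{n-k}$ adapted to the splitting $TM=\cV\oplus \cH$, with the $V_\alpha$ tangent to the fiber and the $X_i$ basic horizontal lifts of an orthonormal frame of $B$. Then the three Ricci components are expanded via
\begin{equation}
\Ric_M(E,F)=\sum_\alpha R^M(E,V_\alpha,F,V_\alpha)+\sum_i R^M(E,X_i,F,X_i).
\end{equation}
For vertical $E=U$, $F=V$: the vertical sum reduces, via the Gauss equation and $T=0$, to $\Ric_F(U,V)$; the horizontal sum, via O'Neill's identity $R^M(U,X,V,X)=\langle (\nabla_U T)(X,V),X\rangle -\langle A_XU,A_XV\rangle +\ldots$ specialized to $T=0$, yields $\sum_i\langle A_{X_i}U,A_{X_i}V\rangle=(AU,AV)$, where the last equality records the standard identification of $(AU,AV)$ as the trace appearing there. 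For horizontal $E=X$, $F=Y$: the horizontal sum produces $\Ric_B(X,Y)$ because basic horizontal lifts have curvature equal to that on the base modulo an $A$-correction that, together with the vertical sum, collapses to $-2(A_X,A_Y)$; this is the standard computation of Proposition 9.36 of \cite{Besse}.

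The main obstacle, and the only non-routine point, is the mixed identity $\Ric_M(U,X)=(\div_B A[X],U)$. Here one uses O'Neill's formula $R^M(X,U,Y,Z)=\langle (\nabla_U A)_X Y,Z\rangle$ (valid when $T=0$), and exchanges the roles of the indices to write
\begin{equation}
\Ric_M(U,X)=\sum_i R^M(U,X_i,X,X_i)=\sum_i\langle (\nabla_{X_i}A)_{X_i}X,U\rangle,
\end{equation}
after disposing of the vertical contribution (which vanishes because $R^M(X,V_\alpha,U,V_\alpha)=0$ when $T=0$). The remaining work is to recognize that $\sum_i(\nabla_{X_i}A)_{X_i}X$ is precisely the horizontal divergence of the tensor $A[X]:\cH\to \cV$, $Y\mapsto A_YX$, and that this horizontal divergence descends to $\div_B A[X]$ on the base. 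This last step requires a short calculation using the fact that basic horizontal lifts project to an orthonormal frame on $B$ and that the Christoffel symbols of $M$ restricted to horizontal directions match, modulo $A$-terms that are skew and therefore drop from the trace, those of $B$; once this is verified the proposition follows.
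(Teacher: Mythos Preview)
The paper does not supply its own proof of this proposition; it simply refers the reader to O'Neill \cite{Oneill} and to \cite[Proposition 9.36]{Besse}. Your sketch is precisely the standard derivation found in those references --- specialize O'Neill's curvature identities to $T\equiv 0$ and trace --- so it is correct and aligned with what the paper cites.
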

\begin{remark}
	Note that in the above proposition we have the explicit expressions
	\begin{align}
		&(AU,AV):=\sum_ig(A_{X_i}U,A_{X_i}V)\, ,\notag\\
		&(A_X,A_Y):=\sum_ig(A_{X}X_i,A_{Y}X_i)\, ,\notag\\
		&\text{div}_BA:=\sum_i\left(\nabla_{X_i}A\right)(X_i,\cdot)\, ,
	\end{align}
	where $\{X_i\}$ is an orthonormal basis of the horizontal space.
\end{remark}

It is helpful to record how the Ricci curvature on the total space of the Riemannian submersion changes when we perform the so called \emph{canonical variation} of the metric, i.e. we define $g_t$ by leaving the horizontal distribution unchanged, the metric on the base unchanged, and scaling the metric on the fibers by a factor $t$. Below we shall assume again that the fibers are totally geodesic, see \cite[Proposition 9.70]{Besse}.

\begin{corollary}\label{cor:canonicalvariation}
	Let $\pi:(M,g)\to (B,g_B)$ be a Riemannian submersion with totally geodesic fibers and let $g_t$ the Riemannian metric on $M$ obtained by scaling the fibers metrics with a factor $t$. Then
	\begin{align}
		\Ric_t(U,V)=& \, \Ric_F(U,V)+t^2(AU,AV)\, ,\\
		\Ric_t(X,U)= & \, t \left({\rm div}_BA[X],U\right)\, , \\
		\Ric_t(X,Y)=& \, \Ric_B(X,Y)-2t\left(A_X,A_Y\right)\, .
	\end{align}
	Above, $A$ denotes the integrability tensor of the Riemannian submersion $\pi:(M,g)\to (B,g_B)$.
\end{corollary}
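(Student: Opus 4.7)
The plan is to reduce the corollary to Proposition~\ref{prop:Rictotgeo} applied to the rescaled submersion $\pi:(M,g_t)\to(B,g_B)$. First I would check that the hypotheses are preserved. The horizontal distribution $\mathcal{H}$ is the $g_t$-orthogonal complement of $\mathcal{V}$ and coincides with the $g$-orthogonal complement, since $g_t$ agrees with $g$ on $\mathcal{H}$ and only rescales $\mathcal{V}$ by the constant $t$; hence $\pi$ is still a Riemannian submersion onto $(B,g_B)$ with the same horizontal distribution. The fibers remain totally geodesic in $g_t$: a direct Koszul computation shows that for vertical $U,V$ and horizontal $W$ one has $g_t(\mathcal{H}\nabla^t_UV,W)=t\,g(\mathcal{H}\nabla_UV,W)=0$, so the second fundamental form of the fibers simply rescales by $t$ and still vanishes.

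The heart of the argument is to compute how the O'Neill integrability tensor transforms under the canonical variation. For horizontal $X,Y$ one has $A^t_XY=\frac{1}{2}\mathcal{V}[X,Y]=A_XY$, since both the Lie bracket and the vertical projection are independent of the metric. For horizontal $X$ and vertical $V$, $A^t_XV$ is horizontal and is characterised by the skew-symmetry identity
\begin{equation}
g_t(A^t_XV,Y)=-g_t(A^t_XY,V)=-t\,g(A_XY,V)=t\,g(A_XV,Y)
\end{equation}
valid for every horizontal $Y$, from which one concludes $A^t_XV=tA_XV$.

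With these transformation rules in hand, the three formulas drop out of Proposition~\ref{prop:Rictotgeo} applied to $(M,g_t)$ by careful bookkeeping of the factors of $t$. Since constant rescaling of the fiber metric preserves its Ricci tensor, $\Ric_{F,t}\equiv\Ric_F$. Taking a $g$-orthonormal (equivalently $g_t$-orthonormal) horizontal frame $\{X_i\}$, the pairing $(A^tU,A^tV)_t=\sum_ig_t(tA_{X_i}U,tA_{X_i}V)=t^2(AU,AV)$ because $A_{X_i}U$ is horizontal, giving the first equation. Dually, $(A^t_X,A^t_Y)_t=\sum_ig_t(A_XX_i,A_YX_i)=t(A_X,A_Y)$ because $A_XX_i$ is vertical, giving the third equation. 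The mixed equation follows by the same bookkeeping, once one verifies that the covariant derivative $(\nabla^t_{X_i}A^t)(X_i,\cdot)$ inherits precisely one factor of $t$ from the rescaling of $A^t_XV=tA_XV$. The only mildly delicate point, and the place where care is required, is tracking these powers of $t$ consistently; once the transformation rule for $A^t$ is established, the remaining computations are mechanical.
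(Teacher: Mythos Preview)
The paper does not actually prove this corollary; it simply cites \cite[Proposition~9.70]{Besse}. Your approach---applying Proposition~\ref{prop:Rictotgeo} to the rescaled submersion $(M,g_t)\to(B,g_B)$ after working out the transformation rules $A^t_XY=A_XY$ and $A^t_XV=tA_XV$---is correct and is precisely the standard derivation one finds in Besse.

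One small point of bookkeeping in your mixed term: in $\mathrm{div}_BA^t[X]=\sum_i(\nabla^t_{X_i}A^t)(X_i,X)$ both slots are horizontal, so the relevant tensor value is $A^t_{X_i}X=A_{X_i}X$, which carries no factor of $t$. The factor of $t$ in the formula $\Ric_t(X,U)=t\,(\mathrm{div}_BA[X],U)$ instead comes from the fact that $\mathrm{div}_BA[X]$ is vertical and the pairing with $U$ is taken in $g_t=t\,g|_{\mathcal V}$. One must also check that $g(\mathrm{div}_B^tA^t[X],U)=g(\mathrm{div}_BA[X],U)$, which follows once you verify that the vertical projection of $\nabla^t_{X_i}W-\nabla_{X_i}W$ vanishes for vertical $W$ and horizontal $X_i$ when fibers are totally geodesic (a short Koszul computation of the same flavor as your check that total geodesicity is preserved). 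This is a minor clarification; your overall argument is sound.
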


We are going to rely on the following technical lemma about Riemannian submersions with totally geodesic fibers over oriented surfaces.

\begin{lemma}\label{lemma: div}
	Let $\pi : (N,g_N) \to (\Sigma^2,g_{\Sigma})$ be a smooth Riemannian submersion with totally geodesic fibers, where $(\Sigma, g_\Sigma)$ is a compact, oriented surface. Let $\omega := \pi^* \Vol_{\Sigma}$. Then, ${\rm div}_N \omega$ vanishes along horizontal directions and satisfies 
	\begin{align}
		{\rm div}_N \omega[V] = 
		2 \langle  A(X_1,X_2), V \rangle 
	\end{align}
	for every vertical direction $V$, where $X_1$, $X_2$ are horizontal vector fields corresponding to a local oriented orthonormal frame of $\Sigma$.
\end{lemma}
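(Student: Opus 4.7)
My plan is to choose a well adapted local orthonormal frame on $N$ and reduce the statement to elementary manipulations with the O'Neill tensor $A$. Concretely, I would fix an oriented local orthonormal frame $e_1,e_2$ on $\Sigma$, denote by $X_1,X_2$ their horizontal lifts, and complete to a local orthonormal frame $\{X_1,X_2,U_1,\dots,U_k\}$ of $TN$ with $\{U_j\}$ an orthonormal frame of the vertical distribution. Since $\omega=\pi^{\ast}\Vol_{\Sigma}$, one has the pointwise identities $\omega(X_1,X_2)=1$, $\omega(X_i,U_j)=0$ and $\omega(U_j,U_\ell)=0$, which will immediately kill most terms.

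With this setup I would expand
\begin{equation}
(\mathrm{div}_N\omega)(Y)=\sum_{i=1}^{2}(\nabla_{X_i}\omega)(X_i,Y)+\sum_j(\nabla_{U_j}\omega)(U_j,Y),
\end{equation}
apply Leibniz to each $(\nabla_E\omega)(F,Y)=E(\omega(F,Y))-\omega(\nabla_EF,Y)-\omega(F,\nabla_EY)$, and decompose every covariant derivative using the O'Neill formalism recalled in Section \ref{ss:prelim:submersions}. The totally geodesic hypothesis forces the second fundamental form of the fibers to vanish, so $\nabla_UW$ is vertical for $U,W$ vertical and $\nabla_UX$ is horizontal for $U$ vertical and $X$ horizontal (with $\mathcal{H}\nabla_UX=A_XU$ when $X$ is basic), while for horizontal $X,Y$ one has $\nabla_XY=\mathcal{H}\nabla_XY+A_XY$, the horizontal part being the lift of the Levi-Civita connection on $\Sigma$.

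When $Y=X_k$ is horizontal, the vertical sum vanishes term by term because each surviving $\omega$-pairing is either between a horizontal and a vertical vector, or between two vertical vectors; the horizontal sum then reduces, through the Riemannian submersion structure, to the divergence of $\Vol_\Sigma$ on $\Sigma$, which is zero since $\Vol_\Sigma$ is parallel on the base. When $Y=V$ is vertical, the vertical sum again vanishes term by term, and the only surviving contributions come from $-\omega(X_i,A_{X_i}V)$ in the horizontal sum. Using the antisymmetry $A_XY=-A_YX$ on horizontal pairs together with the standard identity $\langle A_XV,Y\rangle=-\langle V,A_XY\rangle$ (a direct consequence of $X\langle V,Y\rangle=0$ combined with the fact that $V$ is vertical and $Y$ is horizontal), both indices $i=1,2$ are seen to contribute the same quantity $\langle V,A_{X_1}X_2\rangle$, yielding the stated factor of $2$.

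The only delicate point will be the sign bookkeeping when converting $\omega(X_i,A_{X_i}V)$ into an inner product with $A_{X_1}X_2$; the rest follows directly from the definitions and from the parallelness of $\Vol_\Sigma$, so no substantial obstacle is expected.
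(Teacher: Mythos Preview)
Your proposal is correct and follows essentially the same route as the paper: both expand the divergence in an adapted orthonormal frame, kill the vertical sum using the totally geodesic hypothesis, reduce the horizontal case to the vanishing of $\mathrm{div}_\Sigma\Vol_\Sigma$, and in the vertical case isolate the terms $-\omega(X_i,A_{X_i}V)$ and convert them via the identity $\langle A_{X_i}V,X_j\rangle=-\langle A_{X_i}X_j,V\rangle$ to obtain the factor $2\langle A_{X_1}X_2,V\rangle$. The only cosmetic difference is that the paper cites this identity from \cite[(9.21c)]{Besse} whereas you sketch its one-line derivation.
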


\begin{proof}
	For every $Z$, we compute
	\begin{equation}\label{eq:divform}
		{\rm div}_N \omega[Z] = \sum_{i=1}^2 \nabla_{X_i} \omega [X_i,Z] + \sum_{j} \nabla_{U_j} \omega[U_j,Z]\, ,
	\end{equation}
	where $U_j$ is an orthonormal frame of the vertical space. 
	\medskip
	
	Using that $\pi_*[U_j] = 0$ and that the fibers are totally geodesic, we can deduce that $\sum_{j} \nabla_{U_j} \omega[U_j,Z] = 0$.
	
	\medskip

	When $Z=Y$ is horizontal, we have
	\begin{align}
	\nonumber	\nabla_{X_i} \omega [X_i,Y]
		& = X_i(\pi^*\Vol_{\Sigma}[X_i,Y])
		- \pi^* \Vol_{\Sigma}(\nabla_{X_i}X_i,Y)
		- \pi^* \Vol_{\Sigma}(X_i, \nabla_{X_i}Y)
		\\
		& = \nabla_{\pi_* X_i} \Vol_{\Sigma}[\pi_* X_i, \pi_* Y] \, .
	\end{align}
	Therefore,
	\begin{equation}
		{\rm div}_N \omega[Y] = {\rm div}_\Sigma \Vol_{\Sigma}[\pi_* Y] = 0 \, .
	\end{equation}

	\medskip

	When $Z=V$ is vertical, we get
	\begin{equation}\label{eq:divsimpl}
		\nabla_{X_i} \omega [X_i, V]
		=
		- \pi^* \Vol_{\Sigma}[X_i, \nabla_{X_i} V]
		= - \pi^* \Vol_{\Sigma}[X_i, A_{X_i} V] \, .
	\end{equation}
	From the identity
	\begin{equation}
		\langle  A_{X_i}V , X_j \rangle  = - \langle A[X_i,X_j], V \rangle  \, ,
	\end{equation}
see \cite[Equation (9.21c)]{Besse}, we deduce that
	\begin{align}\label{eq:usefform}
		&A_{X_1} V = - \langle A[X_1, X_2], V \rangle  X_2
		\\
		& A_{X_2} V = \langle A[X_1,X_2], V \rangle X_1 \, .
	\end{align}
	Hence, from \eqref{eq:divform}, \eqref{eq:divsimpl} and \eqref{eq:usefform} we conclude
	\begin{align}
	\nonumber	{\rm div}_N\omega(V)&=\sum_{i=1}^2 \nabla_{X_i} \omega [X_i, V]\\  
	\nonumber 	&= 2 \langle  A[X_1,X_2], V \rangle  \Vol_{\Sigma}(\pi_* X_1, \pi_* X_2)
		 \\
		& = 2 \langle A[X_1,X_2], V \rangle \, .
	\end{align}
\end{proof}

\subsection{Riemannian Submersions and Circle Bundles}

Let us now restrict ourselves to the case of a Riemannian $S^1$-principal bundle, so that $\pi:M\to B$ is the total space of an $S^1$-principal bundle over $B$.\\ 
Note that if $(B,g_B)$ is a Riemannian manifold, then an $S^1$-invariant metric on $M$ is well defined by the additional data of a principal connection $\eta\in \Omega^1(M)$ and a smooth $f:B\to \dR^+$ which prescribes the length of the $S^1$ fiber above a point.  If $\partial_t$ is the invariant vertical vector field coming from the $S^1$ action, then we have the expressions
\begin{align}
	&\mathcal H = \ker\eta\, ,\notag\\
	&\eta[\partial_t] = 1\, ,\notag\\
	&g(\partial_t,\partial_t) = f^2\, .
\end{align}
In the case of an $S^1$ bundle we have that $d\eta = \pi^*\omega$ where $\omega\in \Omega^2(B)$ is the curvature $2$-form, which relates to the integrability tensor $A$ on $M$ by 
\begin{align}
	A(X,Y) = -\frac{1}{2}\omega[X,Y]\,\partial_t\, .
\end{align}

The following proposition is borrowed from \cite[Lemma 1.3]{GilkeyParkTuschmann}, where it was used to show that any principal $S^1$ bundle $\pi:M\to B$ admits an $S^1$-invariant metric of positive Ricci curvature when the base $(B,g_B)$ has positive Ricci curvature and the total space has finite fundamental group. 

\begin{proposition}\label{prop:RicS1bundle}
	Let $M\stackrel{S^1}{\longrightarrow} B$ be a Riemannian $S^1$-principal bundle as above with $X$ a unit horizontal vector and $U=f^{-1}\partial_t$ a unit vertical vector. Then 
	\begin{align}\label{eq:RicS^1warped}
	\Ric(U,U)=&\,   -\frac{\Delta f}{f}+ \frac{f^2}{2}|\omega|^2\, ,\\
	\label{eq:RicS^1warpedmixed}	\Ric(U,X)=&\, \frac{1}{2}\left(-f\left({\rm div}_B\omega\right)(X)+3\omega[X,\nabla f]\,\right)   \, \\
\label{eq:RicS^1warpedbase}		\Ric(X,X)=&\, \Ric_B(X,X)-  \frac{f^2}{2}|\omega[X]\,|^2 -\frac{\nabla^2f(X,X)}{f}\, ,
	\end{align}
	where it is understood, when necessary, that we are identifying the horizontal vector field $X$ with an element of $TB$.
\end{proposition}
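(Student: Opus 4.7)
The strategy is to reduce the computation to the setting of a Riemannian submersion with totally geodesic fibers, where Proposition \ref{prop:Rictotgeo} applies, and then account for the variable fiber length $f$ by a conformal rescaling along the vertical direction. I would first introduce an auxiliary $S^1$-invariant metric $\tilde{g}$ on $M$ that agrees with $g$ on the horizontal distribution $\mathcal{H}=\ker\eta$ but satisfies $\tilde{g}(\partial_t,\partial_t)\equiv 1$. Since the Killing field $\partial_t$ then has constant norm, its orbits are totally geodesic for $\tilde{g}$, and $\pi:(M,\tilde{g})\to (B,g_B)$ is a Riemannian submersion with totally geodesic fibers whose integrability tensor is $\tilde{A}(X,Y)=-\tfrac{1}{2}\omega[X,Y]\,\partial_t$. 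Applying Proposition \ref{prop:Rictotgeo} expresses $\tilde{\Ric}$ in terms of $\Ric_B$, $|\omega|^2$, and $\mathrm{div}_B\omega$, providing the backbone of each of the three identities.

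Next, I would relate $g$ to $\tilde{g}$ via $g=\tilde{g}+(f^2-1)\,\eta\otimes\eta$, a pointwise rescaling of the vertical part by $f^2$. Via the Koszul formula, using that $f$ is basic (so $\partial_t f=0$) and that $\partial_t$ remains Killing for $g$ with $|\partial_t|_g^2=f^2$, one obtains $\nabla^g_{\partial_t}\partial_t=-f\nabla f$ (with $\nabla f$ the horizontal lift of the base gradient). This yields the second fundamental form of the fibers $T(U,U)=-\nabla f/f$ with mean curvature vector $H=-\nabla f/f$. The standard Riccati/traced Gauss identities then produce the Laplacian correction $-\Delta f/f$ in $\Ric^g(U,U)$ and the Hessian correction $-\nabla^2 f(X,X)/f$ in $\Ric^g(X,X)$, while the integrability contribution from $\tilde{g}$ acquires a factor of $f^2$ after rescaling $U=f^{-1}\partial_t$, giving the $\tfrac{f^2}{2}|\omega|^2$ and $-\tfrac{f^2}{2}|\omega[X]|^2$ terms in \eqref{eq:RicS^1warped} and \eqref{eq:RicS^1warpedbase}.

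The main obstacle will be the mixed identity \eqref{eq:RicS^1warpedmixed}. Under the vertical rescaling $\tilde{g}\mapsto g$, the connection term $\nabla_X U$ acquires an additional piece proportional to $X(\log f)\,U$, which, when contracted against $A_X Y$, produces extra terms of the form $\omega[X,\nabla f]$. Concretely, one contribution of $\tfrac{1}{2}\omega[X,\nabla f]$ comes from the scaled divergence $f\,\mathrm{div}_B\omega$ once $f$ is allowed to vary over $B$ (via the product rule on $f\omega$), and a further $\omega[X,\nabla f]$ arises from the change of the horizontal--vertical block of the Levi-Civita connection. Assembling these carefully yields the coefficient $3$ in $3\,\omega[X,\nabla f]$. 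The delicate bookkeeping of these contributions, and in particular verifying signs, is the most involved part; once this is done, collecting everything gives the three formulas.
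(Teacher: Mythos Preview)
The paper does not actually prove this proposition: it simply records the statement and attributes it to \cite[Lemma~1.3]{GilkeyParkTuschmann}. So there is no ``paper's own proof'' to compare against.

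Your strategy is sound and would yield the result. A somewhat more direct route, and the one implicit in the cited reference, is to apply the full O'Neill formulas for a Riemannian submersion \emph{without} assuming totally geodesic fibers (see \cite[9.36]{Besse}, which also records the general case). For an $S^1$-bundle with variable fiber length $f$, the second fundamental form tensor $T$ is determined by $T(U,U)=-\nabla f/f$ (exactly as you compute), and substituting this together with $A(X,Y)=-\tfrac12\omega[X,Y]\,\partial_t$ into the general formulas produces \eqref{eq:RicS^1warped}--\eqref{eq:RicS^1warpedbase} in one step. Your approach---passing through the auxiliary totally-geodesic metric $\tilde g$ and then tracking the effect of the vertical rescaling $g=\tilde g+(f^2-1)\,\eta\otimes\eta$---amounts to deriving those $T$-tensor contributions by hand rather than quoting them. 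This is perfectly valid, and has the advantage of relying only on Proposition~\ref{prop:Rictotgeo} as stated in the paper; the trade-off is the bookkeeping you anticipate in the mixed term \eqref{eq:RicS^1warpedmixed}, where the coefficient $3$ indeed arises from combining the variation of $A$ under the rescaling with the product-rule term from $\mathrm{div}_B(f\omega)$.
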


Below we record a well-known lemma about Gauge transformations for principal $S^1$-bundles that will be useful later.

Recall that a Gauge transformation of an $S^1$-principal bundle $\pi: M \to B$ is a diffeomorphism $\Phi: M \to M$ such that $\pi\circ \Phi(p) = \pi(p)$ for every $p\in M$ and  
\begin{equation}
	\Phi(\theta \cdot p) = \theta \cdot \Phi(p) \, , \quad p\in M\, , \, \, \theta \in S^1\, .
\end{equation}

\begin{lemma}\label{lemma:Gauge}
	Any Gauge transformation $\Phi: M \to M$ of a simply connected  $S^1$-principal bundle $\pi: M \to B$ is isotopic to the identity.
\end{lemma}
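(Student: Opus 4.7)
The plan is to reduce the claim to the triviality of $[B,S^1]$ for simply connected $B$. First I would show that any gauge transformation $\Phi:M\to M$ has the form $\Phi(p)=\phi(p)\cdot p$ for a unique smooth $\phi:M\to S^1$ (using that the $S^1$-action is free and transitive on fibers). The equivariance condition $\Phi(\theta\cdot p)=\theta\cdot\Phi(p)$ unwinds to $\phi(\theta\cdot p)\cdot\theta\cdot p=\theta\cdot\phi(p)\cdot p$, i.e.\ $\phi(\theta\cdot p)=\phi(p)$ since $S^1$ is abelian. Hence $\phi$ is $S^1$-invariant and descends to a smooth map $\bar\phi:B\to S^1$ with $\phi=\bar\phi\circ\pi$.

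Next I would invoke the long exact sequence of homotopy groups for the fibration $S^1\to M\to B$, namely
\begin{equation}
\pi_1(S^1)\to \pi_1(M)\to \pi_1(B)\to \pi_0(S^1)=0\, .
\end{equation}
From $\pi_1(M)=0$ the surjection forces $\pi_1(B)=0$, so $B$ is simply connected. Then $H_1(B,\dZ)=0$ by Hurewicz and $H^1(B,\dZ)=0$ by universal coefficients; equivalently, since $S^1=K(\dZ,1)$, we have $[B,S^1]=H^1(B,\dZ)=0$, so $\bar\phi$ is null-homotopic through smooth maps (smoothing a continuous null-homotopy if necessary).

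Finally, choosing a smooth homotopy $H:B\times[0,1]\to S^1$ with $H_0=\bar\phi$ and $H_1\equiv 1_{S^1}$, I would pull it back to $\phi_t:=H_t\circ\pi:M\to S^1$ and set
\begin{equation}
\Phi_t(p):=\phi_t(p)\cdot p\, .
\end{equation}
Each $\Phi_t$ is a gauge transformation, $\Phi_0=\Phi$, and $\Phi_1=\mathrm{id}_M$, yielding the required smooth isotopy. I do not anticipate a genuine obstacle: the only minor point to verify is that the null-homotopy can be taken smooth, which is standard by Whitney approximation relative to the endpoints.
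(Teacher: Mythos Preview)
Your proof is correct. The paper's argument is shorter and avoids the detour through $B$: it simply observes that since $\pi_1(M)=0$, the map $\phi:M\to S^1$ lifts along the universal cover $\rho:\dR\to S^1$ to a smooth $\hat\phi:M\to\dR$, and then sets $\Phi_t(p):=\rho(t\hat\phi(p))\cdot p$ to obtain the isotopy directly. So where you descend $\phi$ to $\bar\phi:B\to S^1$, run the long exact sequence to get $\pi_1(B)=0$, and invoke $[B,S^1]\cong H^1(B;\dZ)=0$, the paper just lifts from $M$ itself and interpolates linearly in $\dR$. Both routes are valid; yours has the small bonus of making it transparent that the isotopy is through gauge transformations (since each $\phi_t$ is pulled back from $B$), while the paper's is more elementary in that it needs nothing beyond the lifting criterion for covering spaces.
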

\begin{proof}
	
	It is a classical property that there exists a smooth function $\theta:M\to S^1$ such that $\Phi(p)=\theta(p)\cdot p$ for every $p\in M$.
	
	Since $M$ is simply connected, we can lift $\theta: M \to S^1$ to the universal cover $\rho:\dR \to S^1$, obtaining $\hat \theta : M \to \dR$. Set $\theta_t(p):= \rho(t \hat \theta(p))$ for $t\in [0,1]$, $p\in M$. The map
	\begin{equation}
		\Phi_t (p) := \theta_t(p) \cdot p \, ,\quad t\in [0,1]\, ,
	\end{equation}
    produces the sought isotopy between the Gauge transformation and the identity.
\end{proof}

\vspace{.5cm}

\section{Equivariant twisting}\label{sec:equivtwist}

The goal of this section is to prove Theorem \ref{t:equivariant_mapping_class_S3xS2}, which involves several new and subtle points in comparison to our previous work in \cite{BrueNaberSemolaMilnor1} for the $7$ dimensional example.  We will restate the Theorem momentarily for the ease of readability.\\

First recall the following.  Let $k\in \mathbb{Z}$, then we endow $S^3 \times S^2$ with the (left) $(1,k)$-action
\begin{equation}
	\theta\cdot _{(1,k)}(s_1,s_2):= (e^{i\theta} \cdot s_1, e^{ik\theta}\cdot  s_2)\, , \quad \theta \in S^1 \, ,
\end{equation}
where $e^{i\theta }\cdot s_1$ indicates the left Hopf rotation in $S^3$, and $e^{ik\theta} \cdot s_2 := (e^{ik\theta}z,t)$ is rotation of $S^2$, where we identify $s_2 = (z,t)\in S^2 \subset \mathbb{C} \times \dR$.\\

Our aim is to show that when $k$ is even, there exists a smooth family of positively Ricci curved Riemannian metrics $(S^3\times S^2, g_t)$, $t\in [0,1]$, constant in a neighbourhood of the endpoints, invariant with respect to the $(1,k)$-action, and such that $g_0 = g_{S^3\times S^2}$ and $g_1 = \phi^* g_0$ where $\phi : S^3 \times S^2 \to S^3 \times S^2$ is a diffeomorphism satisfying
\begin{equation}
	\phi(\theta\cdot_{(1,k)}(s_1,s_2)) = \theta \cdot _{(1,0)}\phi(s_1,s_2)\, , \quad \theta \in S^1\, ,\quad (s_1,s_2)\in S^3\times S^2\, .
\end{equation}

Precisely:\\

\begin{theorem}\label{thm:5d twisting}
	Let $g_0=g_{S^3\times S^2}$ be the standard product metric on $S^3\times S^2$, and let $k\in \dZ$ be even.  Then there exist a diffeomorphism $\phi:S^3\times S^2\to S^3\times S^2$ and a family of metrics $(S^3\times S^2,g_t)$ such that
	\begin{enumerate}
		\item $\Ric_t>0$ for all $t\in [0,1]$
		\item The $S^1$-action $\cdot_{(1,k)}$ on $S^3\times S^2$ is an isometric action for all $g_t$.
		\item $g_1 = \phi^*g_0$ with $\phi\big(\theta\cdot_{(1,k)}(s_1,s_2)\big)= \theta\cdot_{(1,0)}\phi(s_1,s_2)$ .
	\end{enumerate}
\end{theorem}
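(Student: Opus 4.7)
The plan is to realize the family $(S^3\times S^2, g_t)$ as a family of $S^1$-invariant metrics with varying bundle data, and to interpolate that data while keeping the Ricci form positive via the formulas recorded in Section~\ref{s:preliminaries}, following the overall scheme of \cite[Theorem 3.2]{BrueNaberSemolaMilnor1} but with a new ingredient to handle the mixed Ricci term.

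The first step is the topological content, which is where the evenness of $k$ enters. Consider the quotient $B_k:=(S^3\times S^2)/_{(1,k)}S^1$; via the Hopf projection on the first factor, $B_k$ fibers over $S^2$ with $S^2$-fibers, with clutching data given by the $k$-fold loop $\theta\mapsto R_{k\theta}$ in $SO(3)$. Since $\pi_1(SO(3))=\dZ/2$, this loop is nullhomotopic exactly when $k$ is even, and in that case $B_k\cong S^2\times S^2$. The same mod-$2$ obstruction governs the existence of a smooth map $\psi:S^3\to SO(3)$ satisfying the twisted equivariance $\psi(e^{i\theta}s_1)=\psi(s_1)\circ R_{-k\theta}$, and setting $\phi(s_1,s_2):=(s_1,\psi(s_1)(s_2))$ yields a diffeomorphism conjugating the $(1,k)$-action to the $(1,0)$-action. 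This $\phi$ has the special form of Remark~\ref{remark:isotopic_class} and automatically makes $g_1:=\phi^*g_0$ invariant under the $(1,k)$-action.

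Both $g_0$ and $g_1$ then descend to Riemannian submersion data $(\hat g_i,\eta_i,f_i)$ on the principal $S^1$-bundle $S^3\times S^2\to B_k\cong S^2\times S^2$, consisting of a base metric, a principal connection one-form, and a positive fiber-length function. I would smoothly interpolate these between the two endpoints, obtaining $(\hat g_t,\eta_t,f_t)$, and lift canonically to a family $g_t$ of $(1,k)$-invariant metrics on $S^3\times S^2$. Proposition~\ref{prop:RicS1bundle} then reduces positive definiteness of $\Ric_{g_t}$ to the task of bounding the mixed term $\Ric(U,X)=\tfrac12\bigl(-f\,\mathrm{div}_B\omega[X]+3\omega[X,\nabla f]\bigr)$ uniformly in $t$ by the diagonal pieces $\Ric(U,U)$ and $\Ric(X,X)$, via a Schur-complement argument.

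The hard part will be controlling this mixed term. In the seven-dimensional paper \cite{BrueNaberSemolaMilnor1} one took $f$ constant and $\omega$ Yang--Mills at every instant, annihilating the mixed term; here the bundle $S^3\times S^2\to S^2\times S^2$ does not admit such a family of connections along the chosen path, so $\mathrm{div}_B\omega$ is forced to be nonzero. I would address this by combining two complementary ideas. First, I would use the canonical variation of Corollary~\ref{cor:canonicalvariation} to shrink the fibers by a small parameter $t_0>0$; the mixed term then scales linearly in $t_0$ while $\Ric(U,U)$ is dominated by the constant warping term and $\Ric(X,X)$ by $\Ric_B$. Second, I would compute $\mathrm{div}_B\omega$ geometrically through Lemma~\ref{lemma: div}, applied to the auxiliary surface submersion induced by one of the two projections $S^2\times S^2\to S^2$, which expresses the divergence in terms of an integrability tensor whose $C^0$-norm is directly controlled by the chosen bundle data. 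A Schur-complement estimate then produces $\Ric_{g_t}>0$ once $t_0$ and the spread of $(\hat g_t,\eta_t,f_t)$ are taken small. The principal obstacle is that the topology rigidifies the admissible bundle data, so the freedom available to make the mixed term small without simultaneously ruining the positivity of the diagonal terms is quite limited and must be managed delicately; the remaining parts of the construction — the identification of the quotient, the production of $\phi$, and the interpolation scheme — then follow with only minor modifications of \cite[Section~6]{BrueNaberSemolaMilnor1}.
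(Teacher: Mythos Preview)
Your overall framework---pass to the $S^1$-quotient $N\cong S^2\times S^2$, interpolate the bundle data $(\hat g_t,\eta_t,f_t)$, and control the Ricci tensor via Proposition~\ref{prop:RicS1bundle} and Lemma~\ref{lemma: div}---matches the paper's. The gap is in the mechanism you propose for taming the mixed term.

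You invoke the canonical variation of Corollary~\ref{cor:canonicalvariation} to shrink the $S^1$ fibers by $t_0$, arguing that the mixed term scales like $t_0$ while $\Ric(U,U)$ stays bounded below by a ``constant warping term''. But the warping function $f_0$ for the $(1,k)$-action on the standard metric is $f_0=\sqrt{1+k^2\sin^2\psi}$, which is \emph{not} constant; by \eqref{eq:laplfalphak} one has $-\Delta f_0/f_0=-2k^2$ at the poles of $S^2$. Under the rescaling $f\mapsto t_0 f$ in Proposition~\ref{prop:RicS1bundle} this term is unchanged while the compensating $f^2|\omega|^2/2$ term scales like $t_0^2$, so $\Ric(U,U)\to -2k^2$ as $t_0\to 0$. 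The Schur-complement argument therefore collapses at the very first step: you cannot make the off-diagonal piece small without destroying positivity of the $(U,U)$ entry. (Note also that Corollary~\ref{cor:canonicalvariation} assumes totally geodesic fibers, which already fails when $f$ is non-constant.)

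The paper's remedy is the opposite move: it shrinks the $S^2$ factor of the product (parameter $\alpha$), not the $S^1$ fiber. This drives $f^\alpha\to 1$ so the warping becomes nearly constant, while simultaneously pushing $\Ric_N$ in the $S^2$-fiber directions up to order $1/\alpha^2$. The mixed term actually \emph{grows} like $1/\alpha$, but is dominated by the diagonal, and a delicate exact cancellation keeps $\Ric(U,U)\approx 2$ at the poles throughout. Crucially, the paper does not use a single linear interpolation of all bundle data: it proceeds in four stages, and in the key stage (Proposition~\ref{prop:g0alphatog1alpha}) it changes the fiber length via $f^\alpha((1-\beta)k)$ and the connection via $(1-\beta)\eta_0^\alpha+\beta\eta_1$ \emph{simultaneously and in this specific pairing}; the paper notes explicitly that doing these separately would lose positivity. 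Only after $f\equiv 1$ and $\eta=\eta_1$ are in place does it round the $S^2$-fibers of $N$ and flatten the Ehresmann connection of $\pi:N\to S^2$. Your generic interpolation scheme does not reproduce these cancellations.
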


For the rest of this section, we will be concerned with the proof of Theorem \ref{thm:5d twisting}. We refer to subsection \ref{subsec:outtwisting} below for an outline of the main steps of the proof.
\medskip

As a preliminary step, in subsection \ref{subsec:quotient} we are going to understand the geometry and the topology of the quotient $N:=S^1\backslash S^3\times S^2$, with respect to the $(1,k)$-action.

It turns out that for $k$ even the space $N$ is a (topologically) trivial Riemannian $S^2$ bundle over a round $S^2$ with totally geodesic fibers. However, the induced metric on the fibers is not round.\\ 
Moreover, the projection to the quotient space $\pi:S^3\times S^2\to N$ is associated with a Riemannian $S^1$-bundle whose fibers have non-constant length and whose connection is not Yang-Mills. This has the effect of introducing potentially negative terms in the formulas for the Ricci curvature of $S^1$-bundles from Proposition \ref{prop:RicS1bundle}.  In comparison to the construction of \cite{BrueNaberSemolaMilnor1} this will be the main source of technical difficulty for the proof of Theorem \ref{thm:5d twisting}, and resolving this issue is a careful balancing act.\\

\subsection{The geometry of $N=S^1\backslash S^3\times S^2$}\label{subsec:quotient}

We endow $S^3 \times S^2$ with the standard metric $g_0:=g_{S^3_1 \times S^2_1}$. Let $Z_1, Z_2, Z_3$ an orthonormal base of right invariant vector fields on the $S^3$ factor. We assume that $Z_1$ induces the left-Hopf action. We will write $Z_1^*, Z_2^*, Z_3^*$ to denote the dual frame.

On $S^2$, we introduce standard spherical coordinates
\begin{equation}
	\begin{cases}
		x = \cos \theta \sin \psi
		\\
		y =  \sin\theta \sin \psi
		\\
		z = \cos \psi\, .
	\end{cases}
\end{equation}

Let us now define $(N,h)$ to be the isometric quotient of $(S^3\times S^2, g_0)$ by the $(1,k)$ action. We have that $\pi_{(1,k)} : S^3 \times S^2 \to N$ is a principal $S^1$ bundle with invariant vertical vector field
\begin{equation}
	\partial_t = Z_1 + k \frac{\partial}{\partial \theta} \, ,
\end{equation}
and connection form
\begin{equation}
	\eta_0 := \frac{1}{1 + k^2 \sin^2\psi}  \left( Z_1^* + k \sin^2\psi d\theta  \right) \, .
\end{equation}
Notice that 
\begin{equation}\label{eq:sizeS1}
	g_{0}(\partial_t, \partial_t) = 1 + k^2 \sin^2\psi \, 
\end{equation}
is not constant, hence the fibers are not totally geodesic. Moreover, it is not hard to check that the curvature form $\omega_0 = d \eta_0$ is not harmonic. Equivalently, the connection $\eta_0$ is not Yang-Mills.\\

It is easy to check that $N$ is an $S^2$-bundle over $S^2$ with projection map $\pi: N \to S^2$ induced by $S^3 \times S^2 \ni (s_1,s_2) \to \pi_{\rm Hopf}(s_1) \in S^2$.

The following statement about the structure of this $S^2$-bundle appears to be known, see for instance \cite[Section 3.3]{BettiolKrishnan}. However, we sketch its proof for the sake of completeness as we were not able to locate a proof in the literature.\\

\begin{lemma}\label{lemma:diffeostruct}
The following hold:
\begin{enumerate}
	\item When $k\in\mathbb{Z}$ is even,  $\pi : N \to S^2$ is a trivial $S^2$-bundle and hence $N$ is diffeomorphic to $S^2 \times S^2$.
	\item When $k\in\mathbb{Z}$ is odd, $\pi:N\to S^2$ is a non trivial $S^2$-bundle and hence $N$ is diffeomorphic to $\mathbb{CP}^2\#\overline{\mathbb{CP}^2}$.
\end{enumerate}
\end{lemma}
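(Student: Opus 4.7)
Since the excerpt already notes that $\pi:N\to S^2$ is an $S^2$-bundle, only its diffeomorphism type needs to be determined. The plan is to extract an explicit clutching map and then invoke the classification of oriented $S^2$-bundles over $S^2$. Trivialize the Hopf bundle $\pi_{\rm Hopf}:S^3\to S^2$ over the two hemispheres $D_\pm\subset S^2$ by smooth local sections, obtaining $\pi_{\rm Hopf}^{-1}(D_\pm)\cong D_\pm\times S^1$. In such coordinates the $(1,k)$-action on $D_\pm\times S^1\times S^2$ reads $\phi\cdot(x,\tau,s)=(x,\tau+\phi,e^{ik\phi}s)$, so the slice $\{\tau=0\}$ furnishes an explicit local trivialization
\begin{equation}
N|_{D_\pm}\longrightarrow D_\pm\times S^2,\qquad [(x,\tau,s)]\longmapsto (x,e^{-ik\tau}s).
\end{equation}

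Next I would compute the clutching map. Since the Hopf bundle has Euler number $1$, on the equator $S^1\subset S^2$ the two Hopf trivializations are related by $\tau_-=\tau_++\theta$. Substituting into the formulas above, the two induced trivializations of $N$ over the equator differ by the rotation $s\mapsto e^{-ik\theta}s$ of the $S^2$ fiber, so the clutching map is the loop
\begin{equation}
S^1\ni \theta\longmapsto R_{-k\theta}\in SO(3),
\end{equation}
where $R_\alpha\in SO(3)$ denotes rotation by angle $\alpha$ about the axis fixed by the $S^1$-action on $S^2$.

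Finally, by Smale's theorem $\mathrm{Diff}^+(S^2)$ deformation retracts onto $SO(3)$, so oriented smooth $S^2$-bundles over $S^2$ are classified by $\pi_1(SO(3))=\mathbb{Z}/2\mathbb{Z}$. The homotopy class of $\theta\mapsto R_{-k\theta}$ in $\pi_1(SO(3))$ is $k\bmod 2$. Hence for $k$ even the bundle is trivial, giving $N\cong S^2\times S^2$, while for $k$ odd $N$ is the unique nontrivial oriented $S^2$-bundle over $S^2$; this is the first Hirzebruch surface $F_1$, which is well known to be diffeomorphic to $\mathbb{CP}^2\#\overline{\mathbb{CP}^2}$. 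The only somewhat delicate point is keeping consistent sign conventions for the Hopf transition $\tau_-=\tau_++\theta$ and verifying that the resulting loop in $SO(3)$ lifts nontrivially to $SU(2)$ precisely when $k$ is odd; the remainder of the argument is routine.
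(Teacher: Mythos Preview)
Your proof is correct and follows essentially the same route as the paper: decompose $S^3$ via Hopf trivializations over two disks, compute the induced clutching map of the quotient $S^2$-bundle as the loop $\theta\mapsto R_{-k\theta}$ in $SO(3)$, and read off the parity via $\pi_1(SO(3))=\dZ/2$. The only notable differences are cosmetic: the paper writes out the solid-torus decomposition of $S^3$ explicitly and verifies the homotopy class of the loop via the quaternionic double cover $S^3\to SO(3)$, and in the odd case it gives a short explicit argument (collapsing Hopf fibers at the two ends of $S^3\times[-1,1]$) identifying the nontrivial bundle with $\mathbb{CP}^2\#\overline{\mathbb{CP}^2}$, whereas you invoke Smale's theorem and the Hirzebruch surface identification as known facts.
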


\begin{proof}

		We decompose the $S^1$-bundle $\pi_{\rm Hopf} : S^3 \to S^2$ as
		\begin{equation}
			S^3 = D^2 \times S^1 \, \bigcup_\psi \, D^2 \times S^1 \, ,
		\end{equation}
		where the clutching map $\psi : \partial D^2 \times S^1 \to \partial D^2 \times S^1$ is given by $\psi(e^{i\alpha}, e^{i\beta}) = (e^{i\alpha} , e^{i(\alpha + \beta)})$. 
		The Hopf action corresponds to the rotation of the $S^1$ factor in $D^2\times S^1$.

		The above induces a decomposition of $S^3\times S^2$ as
		\begin{equation}
			S^3 \times S^2 = D^2 \times S^1 \times S^2\,  \bigcup_\psi\,  D^2 \times S^1 \times S^2 \, ,
		\end{equation}
	    where, with a slight abuse of notation, $\psi$ denotes also the radial extension of the map $\psi$ introduced above.
	    
	    \medskip

	   With respect to this decomposition, the $(1,k)$-action can be written as
	    \begin{equation}
	    	\theta \cdot _{(1,k)} (a, e^{i\beta}, s) = (a, e^{i\theta + \beta}, e^{ik \theta } \cdot  s) \, ,\quad \theta \in S^1\, ,
	    \end{equation}
        where $(a,e^{i\beta}, s)\in D^2 \times S^1 \times S^2$, and we recall that $e^{ik \theta } \cdot  s = (e^{ik\theta}z,t)$ with the identification $s=(z,t)\in S^2\subset \mathbb{C}\times \dR$.
    \smallskip

	    It is easy to check that 
	    \begin{equation}
	    	\pi_{(1,k)} : D^2 \times S^1 \times S^2 \to D^2 \times S^2\, ,
	    	\quad \pi_{(1,k)}(a, e^{i\beta}, s) = (a, e^{-ik\beta} s) \, ,
	    \end{equation}
	    is the quotient projection induced by the $(1,k)$-action. The gluing map $\psi$ descends to the quotient inducing a gluing map
	    \begin{equation}
	    	\phi_k : \partial D^2 \times S^2 \to \partial D^2 \times S^2\, , \quad \phi_k(e^{i\alpha},s) = (e^{i\alpha}, e^{-ik\alpha}\cdot s) \, .
	    \end{equation}
	    Therefore, $N$ is isomorphic to the $S^2$-bundle over $S^2$ obtained by gluing two copies of $D^2 \times S^2$ with the clutching map $\phi_k$.
	   
	    It is classical, see for instance \cite{Steenrod}, that the isomorphism class of $N$ as an $S^2$-bundle over $S^2$ depends only on the homotopy class of the clutching map, viewed as a map from $S^1$ to $\mathrm{Diff}(S^2)$:
	    \begin{equation}\label{eq:loop O(3)}
	    	S^1 \ni \alpha \mapsto e^{-ik\alpha} \cdot \,   \in \mathrm{SO}(3) \subset {\rm Diff}(S^2)\, .
	    \end{equation}

        Recall that $\mathrm{SO}(3)$ is diffeomorphic to $\mathbb{RP}^3$. In particular, $\pi_1(\mathrm{SO}(3))=\mathbb{Z}_2$.\\
        
        {\bf Claim:} The map in \eqref{eq:loop O(3)} is homotopic to the constant map when $k$ is even, and to $\alpha \mapsto e^{i \alpha} \cdot $, which is homotopically non-trivial, when $k$ is odd. \\

	 In order to prove the claim,  we identify every point $p\in \dR^3$ with a purely imaginary quaternion in $S^3$. For every $g\in S^3$, the map $A_g$ defined via $A_g (p) := g \cdot p \cdot g^{-1}$ belongs to $ \mathrm{SO}(3)$. Moreover, the mapping $S^3 \ni g \mapsto A_g \in \mathrm{SO}(3)$ is a covering of degree two. 
	    
	    Notice that $A_g$ with $g= e^{i\theta/2}$ represents a rotation of angle $\theta$. Hence, up to correctly identifying the axis of rotation, $e^{i k \alpha} \cdot \, \in \mathrm{SO}(3)$ coincides with $A_{e^{ik \alpha/2}}$, $\alpha\in [0, 2\pi]$. 
	    
	    The curve $[0,2\pi] \ni \alpha \mapsto e^{ik\alpha/2}\in S^3$ is a closed loop when $k$ is even, hence it projects to a homotopically trivial loop in $\mathrm{SO}(3)$. On the other hand, when $k$ is odd, the curve $[0,2\pi] \ni \alpha \to e^{ik\alpha/2}\in S^3$ connects $1 \in S^3$ to $e^{i \pi} = -1 \in S^3$, hence it projects to a homotopically non-trivial loop in $\mathrm{SO}(3)$. This completes the proof of the claim.\\

	    It is easy to see that when \eqref{eq:loop O(3)} is homotopic to the constant map $N$ is isomorphic to the trivial bundle $S^2 \times S^2$. 
	    In the other case, we write
	    \begin{equation}
	    	D^2 \times S^2 = D^2 \times S^1 \times [-1, 1] / \sim\, ,
	    \end{equation}
	    where $\sim$ identifies $D^2 \times S^1 \times \{-1\}$ and $D^2 \times S^1 \times \{1\}$ with $D^2 \times \{-1\}$ and $D^2\times \{1\}$, respectively.
	    After gluing two copies of $ D^2 \times S^1 \times [-1, 1] / \sim$ with the clutching map $\phi_k$ we obtain
	    \begin{equation}
	    	S^3 \times [-1, 1] / \sim \, ,
	    \end{equation}
	    where $\sim$ collapses the Hopf fibers of $S^3 \times \{-1\}$ and $S^3\times \{-1\}$ to $S^2 \times \{-1\}$ and $S^2\times \{1\}$, respectively. It is well-known that the latter manifold is diffeomorphic to $\mathbb{CP}^2\#\overline{\mathbb{CP}^2}$.

\end{proof}

\subsection{Outline of the proof of Theorem \ref{thm:5d twisting}}\label{subsec:outtwisting}

The argument will be divided into four main steps corresponding to different subsections.\\

As noted in subsection \ref{subsec:quotient}, the fibers of the $S^1$ bundle $\pi_{(1,k)} : S^3 \times S^2 \to N$ are not totally geodesic, and the connection form is not harmonic. These issues are the primary new sources of difficulty in our construction with respect to the analogous one in \cite{BrueNaberSemolaMilnor1}. The presence of a nontrivial warping function and a nonzero Yang-Mills term make it challenging to control the Ricci curvature during the process of metric deformation.  In particular, for large $k$ these negative Ricci contributions are very large.

The first trick to handle these issues is to shrink the size of the $S^2$ factor. This deformation maintains the Ricci curvature positive, keeps the $S^1$ symmetry, and reduces the effect of the negative contributions to the Ricci curvature coming from the non-constant size of the $S^1$ fiber and from the divergence of the curvature form.
In subsection \ref{subsec:squishS2}, we will study the geometry of the total space $S^3\times S^2$, viewed as an $S^1$ bundle with respect to the $(1,k)$-action, and of its quotient $N$, when we shrink the $S^2$ factor.\\

Provided that the size of the $S^2$ factor is sufficiently small, in subsection \ref{subsec:fiblengthconn} we will be able to modify the length of fibres in the $S^1$-bundle until it becomes constant and to move the connection $1$-form to the Hopf connection $\eta_1:=Z_1^*$, while keeping the Ricci curvature positive.
This is the key step and most delicate part of our proof. It is crucial to execute both modifications (adjusting the size of the fibers and altering the connection form) simultaneously; otherwise, the positivity of the Ricci curvature will not be preserved.\\

The goal of the next two steps will be to deform the geometry of $N$ until it becomes isometric to the product of two round spheres.\\
In subsection \ref{subsec:roundS2} we will deform the induced metric of the $S^2$ fibers on $N$ until it becomes round.\\ 
The last step will be to flatten the connection. Once this has been achieved while keeping the Ricci curvature positive in subsection \ref{subsec:flattenconnection},  it will be easy to verify that the induced geometry on $S^3\times S^2$ is isometric to the standard one via a diffeomorphism conjugating the $(1,k)$-action to the $(1,0)$-action and the proof will be completed.

\subsection{Squishing the $S^2$-factor}\label{subsec:squishS2}
For every $\alpha\in (0,1]$, we consider the metric $g^\alpha:= g_{S^3_1 \times S^2_\alpha}$ on $S^3 \times S^2$.

As in subsection \ref{subsec:quotient} above, we define $(N,h^\alpha)$ to be the isometric quotient of $(S^3\times S^2, g^\alpha)$ by the $(1,k)$ action. We have that $\pi_{(1,k)} : S^3 \times S^2 \to N$ is a principal $S^1$ bundle with invariant vertical vector field
\begin{equation}
	\partial_t = Z_1 + k \frac{\partial}{\partial \theta} \, ,
\end{equation}
whose fibers have length
\begin{equation}\label{eq:falphak}
	f^\alpha(k)^2 := g^\alpha(\partial_t, \partial_t) = 1 + \alpha^2 k^2 \sin^2 \psi \, .
\end{equation}
The induced connection form is given by
\begin{equation}
	\eta_0^\alpha := \frac{1}{f^\alpha(k)^2}  \left( Z_1^* + \alpha^2 k \sin^2\psi d\theta  \right) \, .
\end{equation}
In our coordinates, the horizontal space is spanned by the orthonormal frame
\begin{equation}
	Z_2\, , \quad 
	Z_3\, , \quad
	F_1^\alpha := \frac{1}{\alpha} \frac{\partial}{\partial \psi} \, , \quad
	F_2^\alpha := \frac{1}{f^\alpha(k)} \left(\alpha k \sin \psi\, Z_1 - \frac{1}{\alpha \sin \psi} \frac{\partial}{\partial \theta} \right) \, .
\end{equation}
We notice that $\pi: (N,h_\alpha) \to (S^2, g_{1/2})$ is a Riemannian submersion for every $\alpha\in (0,1]$. Moreover, $F_1^\alpha, F_2^\alpha$ are tangent to the fibers of the induced $S^2$-bundle, and $Z_2, Z_3$ span the horizontal distribution.

\begin{lemma}\label{lemma:fiberstotgeoN}
The fibers of $\pi: (N,h_\alpha) \to (S^2, g_{1/2})$ are totally geodesic. 
\end{lemma}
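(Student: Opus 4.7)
The plan is to realize each fiber of $\pi:(N,h_\alpha)\to (S^2,g_{1/2})$ as the $(1,k)$-quotient of a totally geodesic, $(1,k)$-invariant submanifold of $(S^3\times S^2,g^\alpha)$, and then to invoke the general principle that such quotients are themselves totally geodesic inside $M/G$.

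Concretely, for $p\in S^2_{1/2}$ I would let $H_p:=\pi_{\mathrm{Hopf}}^{-1}(p)\subset S^3$ denote the corresponding Hopf fiber. Since $\pi$ is induced by $(s_1,s_2)\mapsto\pi_{\mathrm{Hopf}}(s_1)$, the fiber $\pi^{-1}(p)\subset N$ coincides with $\pi_{(1,k)}(H_p\times S^2)$. The submanifold $H_p\times S^2\subset S^3\times S^2$ is totally geodesic with respect to $g^\alpha$, being a Riemannian product of totally geodesic factors: $H_p$ is a great circle in $(S^3,g_{S^3_1})$, and $S^2$ is trivially totally geodesic in itself. It is moreover $(1,k)$-invariant, since the left Hopf action preserves each Hopf fiber and the rotational action on the second factor preserves $S^2$.

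The conclusion will then follow from a standard principle: if a Lie group $G$ acts freely and isometrically on a Riemannian manifold $M$ and $\Sigma\subset M$ is a $G$-invariant totally geodesic submanifold, then $\Sigma/G$ is totally geodesic inside $M/G$. The key observation is that at every $x\in\Sigma$ the vertical space $\mathcal{V}_x\subset T_xM$ is contained in $T_x\Sigma$, because the whole $G$-orbit through $x$ lies in $\Sigma$. Consequently, any vector tangent to $\Sigma/G$ at $[x]$ lifts uniquely to a horizontal vector $v\in T_x\Sigma\cap\mathcal{H}_x$. The geodesic of $M$ with initial velocity $v$ remains horizontal (a classical property of Riemannian submersions) and stays inside $\Sigma$ (by total geodesicity of $\Sigma$), so it descends to a geodesic of $M/G$ contained in $\Sigma/G$.

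I do not foresee a significant obstacle here: the argument is purely structural and sidesteps the explicit curvature computations that will be needed for the deformation arguments in the remainder of this section. The only subtlety worth checking is the inclusion $\mathcal{V}_x\subseteq T_x\Sigma$, which is immediate from the $G$-invariance of $\Sigma$ together with the fact that $\mathcal{V}_x$ is spanned by the infinitesimal generators of the action at $x$, which are tangent to the orbit through $x$ and hence to $\Sigma$.
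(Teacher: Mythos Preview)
Your argument is correct, and it takes a genuinely different route from the paper's proof. The paper proceeds by a direct Koszul-formula computation in the explicit frame $\{Z_2,Z_3,F_1^\alpha,F_2^\alpha\}$: it shows that
\[
2\langle \nabla_{F_i}F_j,Z_k\rangle = \langle [F_i,F_j],Z_k\rangle + \langle [F_i,Z_k],F_j\rangle + \langle [F_j,Z_k],F_i\rangle = 0
\]
for $i,j\in\{1,2\}$ and $k\in\{2,3\}$, which gives vanishing of the second fundamental form directly. Your approach instead exploits the product structure upstairs: you recognize $\pi^{-1}(p)$ as the $(1,k)$-quotient of the totally geodesic, $(1,k)$-invariant slice $H_p\times S^2\subset S^3\times S^2$, and then apply the general principle that Riemannian quotients of invariant totally geodesic submanifolds are totally geodesic. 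This is a cleaner, more conceptual argument that avoids computing Lie brackets in the frame; the paper's approach, on the other hand, is consistent with the heavily computational style of the surrounding section, where the same frame and bracket relations are reused repeatedly for the curvature estimates. Either proof is perfectly adequate here.
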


\begin{proof}
It is sufficient to observe that
\begin{equation}\label{eq:totgeod1}
	2 \langle \nabla_{F_i} F_j, Z_k \rangle
	=
	\langle [F_i,F_j], Z_k \rangle 
	+
	\langle [F_i,Z_k], F_j \rangle
	+
	\langle [F_j, Z_k], F_i \rangle 
	= 0 \, ,
\end{equation}
for every $i,j=1,2$, and $k=2,3$.
\end{proof}

Our next goal is to compute some of the relevant quantities in order to understand the Ricci curvature of the quotient space $(N,h_{\alpha})$. The computations will be important in the subsequent subsections when we will start deforming the geometry.  

\subsubsection{The curvature form}
For the curvature form induced by the Riemannian submersion $\pi_{(1,k)}: (S^3\times S^2,g_{\alpha})\to (N,h_{\alpha})$ it can be easily checked that 
\begin{equation}\label{eq:omega0alpha}
	\omega_0^\alpha 
	:= d \eta_0^\alpha 
	= -\frac{2}{f^\alpha(k)^2} Z_2^* \wedge Z_3^* - \frac{2 k \cos \psi}{f^\alpha(k)^3} F_1^{\alpha,*} \wedge F^{\alpha,*}_2 \, ,
\end{equation}
where $\{F^{\alpha,*}_2\}$ is the dual basis of $\{F^{\alpha}_2\}$.
A direct computation shows that $\omega_0^\alpha$ is not divergence-free. More precisely
\begin{equation}\label{eq:divomega0alpha}
	- \frac{1}{2} {\rm div}_N \omega_0^{\alpha} = \frac{\alpha k \sin\psi}{f^\alpha(k)^3} \left(2 - \frac{1}{\alpha^2} - \frac{3k^2\cos^2\psi}{f^\alpha(k)^2}\right) F_2^{\alpha,*}\, .
\end{equation}

\subsubsection{The length of the fibres}
For the fibre length function $f^{\alpha}(k)$ (see \eqref{eq:falphak}) a direct computation shows

\begin{equation}\label{eq:gradfalphak}
	\nabla f^\alpha(k) = \frac{\alpha k^2 \sin \psi \cos \psi}{f^\alpha(k)} \, F_1^\alpha\, ,
\end{equation}

\begin{equation}\label{eq:laplfalphak}
	-\frac{\Delta f^\alpha(k)}{f^\alpha(k)}
	=
	\frac{k^2 \sin^2\psi}{f^\alpha(k)^2} - \frac{2 k^2 \cos^2\psi}{f^\alpha(k)^4}\, ,
\end{equation}

and 
\begin{equation}\label{eq:hessfalphak}
		f^\alpha(k)^{-1} \Hess f^\alpha(k)
		 =
		\left( \frac{k^2 \cos^2 \psi}{f^\alpha(k)^4} - \frac{k^2 \sin^2\psi}{f^\alpha(k)^2} \right)\, F_1^{\alpha, *}\otimes F_1^{\alpha, *}
		+
		\frac{k^2 \cos^2 \psi}{f^\alpha(k)^4}\, F_2^{\alpha, *}\otimes F_2^{\alpha, *}\, .
\end{equation}

\subsubsection{The Ricci tensor of $(N, h^\alpha)$}\label{subsec:Ric_N}

Here we record the expression for the Ricci tensor of $(N,h^{\alpha})$.

\begin{lemma}\label{lemma:RicNhalpha}
For any $\alpha>0$ the orthonormal basis $\{Z_2, Z_3, F_1^\alpha, F_2^\alpha\}$ diagonalizes the Ricci tensor of $N^{\alpha}:=(N, h^\alpha)$. Moreover
\begin{align}\label{eq:RicNZ}
	&\Ric_{N^\alpha}(F_1^\alpha, F_1^\alpha)
	 = \frac{1}{\alpha^2} + \frac{3k^2 \cos^2 \psi}{f^\alpha(k)^4} - \frac{k^2 \sin^2\psi}{f^\alpha(k)^2}\, ,
	 \\
\label{eq:RicNZF_2}	& \Ric_{N^{\alpha}}(F_2^\alpha, F_2^\alpha) 
	= \frac{1}{f^\alpha(k)^2}
	\left(2 \alpha^2 k^2 \sin^2\psi + \frac{1}{\alpha^2}\right) 
	+ \frac{3k^2 \cos^2\psi}{f^\alpha(k)^4}\, ,
	\\
\label{eq:RicNZZ_2}	&\Ric_{N^{\alpha}}(Z_2,Z_2) = \Ric_{N^{\alpha}}(Z_3,Z_3) = 2 + \frac{2}{f^\alpha(k)^2}\, .
\end{align}
\end{lemma}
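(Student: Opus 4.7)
The whole computation is an application of Proposition~\ref{prop:RicS1bundle} in reverse: we know $\Ric$ of the total space $(S^3\times S^2,g^\alpha)$ (which is a metric product), we have explicit expressions for the connection form $\eta_0^\alpha$, its curvature $\omega_0^\alpha$, and the fibre-length function $f^\alpha(k)$ together with its gradient/Hessian from \eqref{eq:omega0alpha}, \eqref{eq:falphak}, \eqref{eq:gradfalphak}--\eqref{eq:hessfalphak}. So for each unit horizontal vector $X\in\{Z_2,Z_3,F_1^\alpha,F_2^\alpha\}$ (i.e.\ each horizontal direction on $N$) we simply solve
\begin{equation}
\Ric_{N^\alpha}(X,X)=\Ric_{g^\alpha}(X,X)+\tfrac{1}{2}f^\alpha(k)^2\,|\omega_0^\alpha[X]|^2+\frac{\Hess f^\alpha(k)(X,X)}{f^\alpha(k)}.
\end{equation}
By the polarization of the same identity, $\Ric_{N^\alpha}(X,Y)$ in the same frame is obtained from $\Ric_{g^\alpha}(X,Y)$, the inner product $\langle\omega_0^\alpha[X],\omega_0^\alpha[Y]\rangle$, and $\Hess f^\alpha(k)(X,Y)$.

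\textbf{Step 1 (Ricci of the total space).} Since $(S^3\times S^2,g^\alpha)$ is a Riemannian product, its Ricci tensor is $2$ on the $S^3$-factor and $1/\alpha^2$ on the $S^2_\alpha$-factor, and vanishes mixedly. Writing $F_2^\alpha=\cos\theta_0\,Z_1-\sin\theta_0\,\tilde\partial_\theta$, with $\tilde\partial_\theta:=(\alpha\sin\psi)^{-1}\partial_\theta$ the unit vector along $\partial_\theta$, $\cos\theta_0=\alpha k\sin\psi/f^\alpha(k)$ and $\sin\theta_0=1/f^\alpha(k)$, one reads off
\begin{equation}
\Ric_{g^\alpha}(Z_i,Z_i)=2\ (i=2,3),\quad \Ric_{g^\alpha}(F_1^\alpha,F_1^\alpha)=\tfrac{1}{\alpha^2},\quad \Ric_{g^\alpha}(F_2^\alpha,F_2^\alpha)=\tfrac{2\alpha^2 k^2\sin^2\psi}{f^\alpha(k)^2}+\tfrac{1}{\alpha^2 f^\alpha(k)^2},
\end{equation}
and all off-diagonal entries in the frame $\{Z_2,Z_3,F_1^\alpha,F_2^\alpha\}$ vanish (because the frame respects the product decomposition and $S^2_\alpha$ is two-dimensional with Ricci proportional to the identity).

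\textbf{Step 2 (vanishing of off-diagonal correction terms).} From \eqref{eq:omega0alpha}, the $1$-form $\omega_0^\alpha[X]$ sends $Z_2,Z_3$ into $\mathrm{span}\{Z_2^*,Z_3^*\}$ and $F_1^\alpha,F_2^\alpha$ into $\mathrm{span}\{F_1^{\alpha,*},F_2^{\alpha,*}\}$, so $\langle\omega_0^\alpha[X],\omega_0^\alpha[Y]\rangle$ vanishes on every off-diagonal pair of basis vectors. From \eqref{eq:hessfalphak}, $\Hess f^\alpha(k)$ is supported on $F_1^{\alpha,*}\otimes F_1^{\alpha,*}+F_2^{\alpha,*}\otimes F_2^{\alpha,*}$, and in particular has no off-diagonal entries in our frame. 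Combined with Step 1, this proves the diagonal statement of the lemma.

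\textbf{Step 3 (diagonal entries).} It only remains to evaluate the two correction terms on each basis vector, using \eqref{eq:omega0alpha} and \eqref{eq:hessfalphak}:
\begin{equation}
\omega_0^\alpha[Z_2]=-\tfrac{2}{f^\alpha(k)^2}Z_3^*,\quad \omega_0^\alpha[Z_3]=\tfrac{2}{f^\alpha(k)^2}Z_2^*,\quad \omega_0^\alpha[F_1^\alpha]=-\tfrac{2k\cos\psi}{f^\alpha(k)^3}F_2^{\alpha,*},\quad \omega_0^\alpha[F_2^\alpha]=\tfrac{2k\cos\psi}{f^\alpha(k)^3}F_1^{\alpha,*}.
\end{equation}
Squaring, multiplying by $f^\alpha(k)^2/2$, and adding $\Hess f^\alpha(k)(X,X)/f^\alpha(k)$ from \eqref{eq:hessfalphak} (which is zero on $Z_i$) yields the three right-hand sides \eqref{eq:RicNZ}--\eqref{eq:RicNZZ_2}. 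Concretely, the $F_1^\alpha$-entry receives $2k^2\cos^2\psi/f^\alpha(k)^4$ from $\omega_0^\alpha$ and $k^2\cos^2\psi/f^\alpha(k)^4-k^2\sin^2\psi/f^\alpha(k)^2$ from the Hessian, producing the $3k^2\cos^2\psi/f^\alpha(k)^4-k^2\sin^2\psi/f^\alpha(k)^2$ correction; the analogous cancellation for $F_2^\alpha$ produces $3k^2\cos^2\psi/f^\alpha(k)^4$; and the $Z_i$-entries receive only the $2/f^\alpha(k)^2$ term from the curvature form.

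\textbf{Expected difficulty.} There is no genuine difficulty beyond careful bookkeeping: the whole computation is a direct insertion of \eqref{eq:omega0alpha}, \eqref{eq:hessfalphak} and Proposition~\ref{prop:RicS1bundle} (with its obvious polarization for off-diagonal horizontal pairs). The only mild subtleties are to verify that the formulas of Proposition~\ref{prop:RicS1bundle} indeed polarize as stated — this is immediate since both the curvature correction $\tfrac12 f^2\langle\omega[X],\omega[Y]\rangle$ and the Hessian correction are symmetric bilinear forms — and to keep straight that $F_2^\alpha$ is a unit vector mixing an $S^3$-direction and an $S^2_\alpha$-direction, so that its Ricci in the product metric involves both eigenvalues $2$ and $1/\alpha^2$ with the weights $\cos^2\theta_0$, $\sin^2\theta_0$ recorded in Step~1.
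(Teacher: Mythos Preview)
Your proposal is correct and follows exactly the same approach as the paper: invert the horizontal formula in Proposition~\ref{prop:RicS1bundle} to write $\Ric_{N^\alpha}(X,Y)=\Ric_{S^3\times S^2_\alpha}(X,Y)+\tfrac{1}{2}f^\alpha(k)^2\langle\omega_0^\alpha[X],\omega_0^\alpha[Y]\rangle+f^\alpha(k)^{-1}\Hess f^\alpha(k)(X,Y)$, and then plug in \eqref{eq:omega0alpha} and \eqref{eq:hessfalphak}. The paper's proof is a two-line sketch of precisely this, while you have filled in all the bookkeeping (including the decomposition of $F_2^\alpha$ into $S^3$ and $S^2_\alpha$ components to read off its product Ricci), but there is no difference in method.
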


\begin{proof}
By Proposition \ref{prop:RicS1bundle}, with a slight abuse of notation and the obvious identifications, we have
\begin{align}
\Ric_{N^{\alpha}}(X,Y)=&\Ric_{S^3\times S^2_{\alpha}}(X,Y)+\frac{f^\alpha(k)^2}{2}\omega_0^{\alpha}(X)\cdot\omega_0^{\alpha}(Y)\\
&+f^\alpha(k)^{-1} \Hess f^\alpha(k)(X,Y)\, .
\end{align}
The statement then follows from \eqref{eq:omega0alpha} and \eqref{eq:hessfalphak}.
\end{proof}

\subsection{Changing the length of the fibers and the connection}\label{subsec:fiblengthconn}

The goal of this subsection is to modify the geometry on $S^3\times S^2$ in order to make the length of fibres in the $S^1$-bundle $\pi:S^3\times S^2\to N$ constant and to reduce the connection to a ``standard'' one. These changes will leave the geometry on the base space $(N,h^{\alpha})$ unchanged.\\

Let us start by discussing this choice of preferred connection.

\begin{lemma}\label{lemma:defeta1}
The $1$-form
\begin{equation}
	\eta_1 := Z_1^* \, 
\end{equation}
is a principal connection for the principal $S^1$-bundle $\pi_{(1,k)}:S^3\times S^2\to N$
with associated curvature form
\begin{equation}
	\omega_1 := d\eta_1 = -2 Z_2^* \wedge Z_3^* \, .
\end{equation}
\end{lemma}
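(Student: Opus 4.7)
The plan is to verify the two defining properties of a principal connection for $\eta_1 = Z_1^*$ (normalization on the fundamental vector field, and $S^1$-invariance) and simultaneously compute its exterior derivative.

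First, the normalization. The infinitesimal generator of the $(1,k)$-action is $\partial_t = Z_1 + k\,\partial/\partial\theta$, and since $Z_1^*$ only involves $S^3$-coordinates, one immediately has $\eta_1(\partial_t) = Z_1^*(Z_1) + k\,Z_1^*(\partial/\partial\theta) = 1$.

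Next, I would compute $d\eta_1$. Since $Z_1^*$ is pulled back from the $S^3$-factor, $d\eta_1$ is purely an $S^3$-form. In the right-invariant orthonormal frame $\{Z_1, Z_2, Z_3\}$ the coefficients $\eta_1(Z_j)$ are constant, so the global formula $d\alpha(X,Y) = X(\alpha(Y)) - Y(\alpha(X)) - \alpha([X,Y])$ reduces the computation to $d\eta_1(Z_i, Z_j) = -\eta_1([Z_i, Z_j])$. Right-invariance of the frame on $S^3 = SU(2)$ forces each bracket $[Z_i, Z_j]$ to lie in $\mathrm{span}\{Z_1, Z_2, Z_3\}$ with structure constants proportional to $\epsilon_{ijk}$; in particular $[Z_1, Z_j]$ has no $Z_1$-component for $j = 2, 3$, so $\eta_1([Z_1, Z_j]) = 0$ and only the $Z_2^* \wedge Z_3^*$ component of $d\eta_1$ survives. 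The precise coefficient may be read off without further calculation by setting $k = 0$ in \eqref{eq:omega0alpha}: in that case $\eta_0^\alpha|_{k=0} = Z_1^* = \eta_1$ and $\omega_0^\alpha|_{k=0} = -2\,Z_2^* \wedge Z_3^*$, hence $d\eta_1 = -2\,Z_2^* \wedge Z_3^*$.

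Finally, $(1,k)$-invariance of $\eta_1$ follows from Cartan's magic formula together with the curvature computation above:
\begin{equation*}
\mathcal{L}_{\partial_t}\eta_1 = d(\iota_{\partial_t}\eta_1) + \iota_{\partial_t}d\eta_1 = d(1) + \iota_{Z_1 + k\,\partial/\partial\theta}\bigl(-2\,Z_2^* \wedge Z_3^*\bigr) = 0,
\end{equation*}
since $Z_1$ is annihilated by $Z_2^*$ and $Z_3^*$, while $\partial/\partial\theta$ is tangent to the $S^2$-factor and hence killed by any $S^3$-form. The whole argument is essentially bookkeeping; the only subtle point is fixing the structure constants of $\{Z_1, Z_2, Z_3\}$, and this is circumvented by deferring to the sign conventions already used in \eqref{eq:omega0alpha}.
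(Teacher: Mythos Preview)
Your proof is correct and in fact more directly addresses the lemma as stated than the paper's own argument. You verify the two defining properties of a principal connection (normalization and invariance) and compute $d\eta_1$, which is exactly what the statement asks for. The trick of reading off the coefficient of $Z_2^*\wedge Z_3^*$ by specializing \eqref{eq:omega0alpha} to $k=0$ is clean and avoids having to fix the sign convention for the structure constants of $S^3$.

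The paper, by contrast, takes the connection axioms for granted and instead records two auxiliary facts: that $\omega_0^\alpha-\omega_1$ is exact via an explicit primitive descending to $N$ (so the two curvature forms are cohomologous on $N$), and that $\omega_1=2\,\pi^*\Vol_{S^2_{1/2}}$. These are not needed to establish the lemma itself, but they are used later, particularly \eqref{eq:omega_1 vol} in Sections~\ref{subsec:flattenconnection} and \ref{subsubsec: Ric h trivializing}. So your approach proves the lemma more transparently, while the paper's approach front-loads observations required downstream; neither subsumes the other.
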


\begin{proof}
Notice that $\omega_0^\alpha - \omega_1 = d(- \frac{\alpha k \sin\psi}{f^\alpha(k)} F_2^{\alpha,*})$, and $- \frac{\alpha k \sin\psi}{f^\alpha(k)} F_2^{\alpha,*}$ is the pull-back of a smooth differential form in $N$. 

Hence, $\omega_0^\alpha$ and $\omega_1$ represent the same cohomology class in $N$.
Given that $\pi: N \to S^2$ is induced by the Hopf projection $\pi_{\rm Hopf} : S^3 \to S^2$, it turns out that
\begin{equation}\label{eq:omega_1 vol}
	\omega_1 = 2\, \pi^* {\rm Vol_{S^2_{1/2}}} \, ,
\end{equation}
where we denoted by ${\rm Vol}$ the volume form.
\end{proof}

\begin{definition}
We let $g_1^\alpha$ be the unique Riemannian metric on $S^3\times S^2$ such that $\pi_{(1,k)}: (S^3\times S^2, g_1^\alpha) \to (N,h^\alpha)$ is a Riemannian submersion with totally geodesic fibers, $g_{1}^\alpha(\partial_t,\partial_t)=1$, and connection form $\eta_1$. 
\end{definition}

The aim of this section is to connect $(S^3 \times S^2, g_0^\alpha)$ to $(S^3 \times S^2, g_1^\alpha)$ with a smooth family of $S^1$-invariant Riemannian metrics $g_\beta^\alpha$, $\beta\in [0,1]$, with uniformly positive Ricci curvature. We will be able to achieve this provided that $\alpha<\alpha(k)$ is sufficiently small.

\begin{proposition}\label{prop:g0alphatog1alpha}
If $0<\alpha<\alpha(k)$ then there exists a smooth family of Riemannian metrics $(S^3\times S^2,g_{\beta}^{\alpha})$, $\beta\in[0,1]$, such that the following hold:
\begin{itemize}
\item[i)] $g_{\beta}^{\alpha}$ is invariant with respect to the $(1,k)$-action for any $\beta\in[0,1]$;
\item[ii)] $\pi_{(1,k)}:(S^3\times S^2,g_{\beta}^{\alpha})\to (N,h^{\alpha})$ is a Riemannian submersion for any $\beta\in[0,1]$;
\item[iii)] $\Ric_{g_{\beta}^{\alpha}}>0$ for any $\beta\in[0,1]$.
\end{itemize}
\end{proposition}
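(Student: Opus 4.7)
I would define the family $g_\beta^\alpha$ by specifying the Riemannian submersion structure $\pi_{(1,k)}:(S^3\times S^2,g_\beta^\alpha)\to(N,h^\alpha)$ through three data: the base metric $h^\alpha$, held fixed so that (ii) is automatic; a fiber length function $f_\beta:N\to\mathbb{R}^+$ interpolating between $f^\alpha(k)$ and $1$; and a principal connection $\eta_\beta$ interpolating between $\eta_0^\alpha$ and $\eta_1$ (well defined since $\eta_0^\alpha-\eta_1$ is the pull-back of a $1$-form on $N$). The $S^1$-invariance in (i) is built into the construction, so the entire difficulty is verifying (iii).

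\textbf{Computing the Ricci tensor.} Given $f_\beta$ and $\omega_\beta=d\eta_\beta$, Proposition \ref{prop:RicS1bundle} yields explicit formulas for $\Ric_{g_\beta^\alpha}$ in the orthonormal frame $\{U,Z_2,Z_3,F_1^\alpha,F_2^\alpha\}$ in terms of the base Ricci (Lemma \ref{lemma:RicNhalpha}) and of the quantities \eqref{eq:falphak}--\eqref{eq:divomega0alpha}. As $\alpha\to 0$: the base Ricci in the $F_1^\alpha,F_2^\alpha$ directions blows up like $1/\alpha^2$ and in the $Z_2,Z_3$ directions is bounded below by $2$; the fiber length satisfies $f_\beta=1+O(\alpha^2 k^2)$ with small gradient and Hessian; and $\omega_\beta$ stays uniformly bounded. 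The only badly behaved quantity is $\mathrm{div}_N\omega_0^\alpha(F_2^\alpha)$, which by \eqref{eq:divomega0alpha} is of order $k/\alpha$; this forces the mixed entry $\Ric(U,F_2^\alpha)$ to be of order $(1-h(\beta))\,k/\alpha$, comparable in size to $\sqrt{\Ric(U,U)\,\Ric(F_2^\alpha,F_2^\alpha)}$.

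\textbf{The non-linear path.} A naive simultaneous linear interpolation $f_\beta=(1-\beta)f^\alpha(k)+\beta$, $\omega_\beta=(1-\beta)\omega_0^\alpha+\beta\omega_1$ does not work: at the poles $\psi=0$ of the $S^2$-factor one has $f^\alpha(k)=1$ and $\Delta f^\alpha(k)=2k^2$ by \eqref{eq:laplfalphak}, and a direct computation gives $\Ric(U,U)\big|_{\psi=0}=2-2k^2\beta(1-\beta)$, which is non-positive for every even $k\ge 2$ at $\beta=1/2$ no matter how small $\alpha$ is. The remedy is to decouple the two interpolations: set $f_\beta=(1-g(\beta))f^\alpha(k)+g(\beta)$ and $\eta_\beta=(1-h(\beta))\eta_0^\alpha+h(\beta)\eta_1$ with smooth $g,h:[0,1]\to[0,1]$ satisfying $g(0)=h(0)=0$, $g(1)=h(1)=1$, and the crucial compatibility condition $(1-h(\beta))^2\ge 1-g(\beta)$ throughout. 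In other words, the connection has to be dragged toward $\eta_1$ \emph{more slowly} than the fiber length is dragged toward $1$; the prototypical choice is $g(\beta)=\beta$, $h(\beta)=1-\sqrt{1-\beta}$, smoothed near the endpoints.

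\textbf{Schur complement and main obstacle.} With this choice, an asymptotic computation as $\alpha\to 0$ shows that the terms of order $k^2$ in $\Ric(U,U)$ cancel exactly against the Schur correction $\Ric(U,F_2^\alpha)^2/\Ric(F_2^\alpha,F_2^\alpha)$, so that the Schur complement of the $\{U,F_2^\alpha\}$-block stays bounded below by a positive constant uniform in $\beta\in[0,1]$. The remaining diagonal entries are controlled by the $1/\alpha^2$ growth of $\Ric_{h^\alpha}$ on $F_1^\alpha,F_2^\alpha$ and by direct estimate on $Z_2,Z_3$; the remaining off-diagonals either vanish identically ($\Ric(U,F_1^\alpha)=\Ric(U,Z_i)=0$, using \eqref{eq:divomega0alpha} and Lemma \ref{lemma: div}) or are of lower order in $\alpha$ and can be absorbed by choosing $\alpha<\alpha(k)$ sufficiently small. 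The expected main obstacle is precisely the behaviour at the poles $\psi=0$, where the non-constancy of $f^\alpha(k)$ produces a strictly negative contribution $-\Delta f_\beta/f_\beta$ to $\Ric(U,U)$; the balancing of this against $f_\beta^2|\omega_\beta|^2/2$ is what pins down the non-linear relation between the interpolation of the fiber length and that of the connection, and matches the claim in Section \ref{subsec:outtwisting} that the two modifications must be executed simultaneously in the right way.
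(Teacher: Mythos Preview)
Your diagnosis and remedy are the right ones, and the paper follows exactly the same route: fix the base $(N,h^\alpha)$, interpolate both the fiber length and the connection, and use the $\{U,F_2\}$ block (trace/determinant, equivalently your Schur complement) to absorb the large off--diagonal coming from $\mathrm{div}_N\omega_0^\alpha$.  The one notable difference is the concrete path. The paper keeps the connection interpolation \emph{linear}, $\eta_\beta=(1-\beta)\eta_0^\alpha+\beta\eta_1$, and for the fiber length uses the substitution $k\mapsto(1-\beta)k$, i.e.\ $f_\beta=f^\alpha((1-\beta)k)=\sqrt{1+\alpha^2(1-\beta)^2k^2\sin^2\psi}$.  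This is \emph{not} an affine combination of $f^\alpha(k)$ and $1$, but it has the virtue that all auxiliary quantities ($\nabla f_\beta$, $\Delta f_\beta$, $\Hess f_\beta$, $\omega_\beta$) are obtained by literally replacing $k$ by $(1-\beta)k$ in the $\beta=0$ formulas, so the estimates are exact rather than asymptotic in $\alpha$.  In your parametrization this corresponds, to leading order in $\alpha$, to $h(\beta)=\beta$ and $g(\beta)=2\beta-\beta^2$, which also satisfies $(1-h)^2=1-g$ with equality; so the two paths are reparametrizations of the same idea, with the paper's choice buying cleaner bookkeeping and no smoothing issues at the endpoints.

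One caveat on your stated compatibility condition. The inequality $(1-h)^2\ge 1-g$ is what is needed at the poles $\psi=0$ to keep $\Ric(U,U)\ge 2$, but your own Schur--complement step at the equator $\psi=\pi/2$ forces the \emph{reverse} inequality up to an $O(1/k^2)$ error: there the positive contribution to $\Ric(U,U)$ is $(1-g)k^2\sin^2\psi$ while the Schur correction is $(1-h)^2k^2\sin^2\psi$, so one needs $(1-h)^2\le(1-g)+2/k^2$.  Hence the compatibility must really be an \emph{equality} $(1-h)^2=1-g$ (modulo $O(1/k^2)$), not merely an inequality.  Your prototypical choice and your later claim of ``exact cancellation'' already reflect this, but the one--sided condition as written would not suffice.
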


For every $\beta \in [0,1]$, we let $g_{\beta}^{\alpha}$ the unique Riemannian metric on $S^3\times S^2$ such that $\pi_{(1,k)} : (S^3 \times S^2, g_\beta^\alpha) \to (N, h^\alpha)$ is a Riemannian submersion with length of fibres and connection respectively given by
\begin{align}\label{eq:warpconnalphabeta}
	& f^\alpha((1-\beta) k)^2 = 1 + \alpha^2 (1-\beta)^2 k^2 \sin^2 \psi \, ,
	\\
	& \eta^\alpha_\beta := (1-\beta) \eta_0^\alpha + \beta \eta_1 \, .
\end{align}
We will denote $U_\beta := f^\alpha ((1-\beta)k)^{-1} \partial_t$ the unitary vertical directions for these metrics.\\

Our next goal is to prove that $\Ric_{g_\alpha^\beta} \ge 1/2$ for any $\beta\in [0,1]$, provided that $0<\alpha\le \alpha(k)$. This will prove Proposition \ref{prop:g0alphatog1alpha}, as (i) and (ii) follow from the very construction.

\subsubsection{Preliminary computations}

To begin with, we can write down an orthonormal frame for the horizontal space induced by the principal connection $\eta_\beta^\alpha$:
\begin{equation}\label{eq:ortohorgalphabeta}
	Z_2\, , \quad 
	Z_3\, , \quad
	F_1^{\alpha, \beta} := F_1^\alpha = \frac{1}{\alpha} \frac{\partial}{\partial \psi} \, , \quad
	F_2^{\alpha, \beta} := F_2^\alpha - \beta \frac{\alpha k \sin \psi }{f^\alpha(k)}\partial_t
	\, .
\end{equation}

An easy computation (cf. with \eqref{eq:omega0alpha} above) shows that the curvature form $\omega^{\alpha}_{\beta}$ satisfies
\begin{equation}\label{eq:curvalphabeta}
	\omega^\alpha_\beta = d \eta_\beta^\alpha
	=
	- 2 \left(\beta + \frac{1-\beta}{f^\alpha(k)^2}\right) Z_2^* \wedge Z_3^* - \frac{2(1-\beta) k \cos \psi}{f^\alpha(k)^3} F_1^{\alpha,\beta,*} \wedge F_2^{\alpha,\beta,*} \, ,
\end{equation}
hence,
\begin{align}\label{eq:normomegaalphabeta}
	& |\omega_\beta^\alpha[F_1^{\alpha,\beta}]|^2 
	= |\omega_\beta^\alpha[F_2^{\alpha,\beta}]|^2
	=
	(1-\beta)^2 \, \frac{4k^2 \cos^2 \psi}{f^\alpha(k)^6}\, ,
	\\
\label{eq:normomegaalphabetaZ_2}	&|\omega_\beta^\alpha[Z_2]|^2
	= 
	|\omega_\beta^\alpha[Z_3]|^2
	=
	4 \left(\beta + \frac{1-\beta}{f^\alpha(k)^2}\right)^2\, ,
	\\
\label{eq:normomegaalphabetatot}	&
	|\omega_{\beta}^\alpha|^2 
	= 4\left(\beta + \frac{1-\beta}{f^\alpha(k)^2} \right)^2 + 4 (1-\beta)^2 \, \frac{k^2 \cos^2 \psi}{f^\alpha(k)^6}\, .
\end{align}
Moreover (cf. with \eqref{eq:divomega0alpha} above)
\begin{equation}\label{eq:divomegaalphabeta}	
	{\rm div}_N \omega_{\beta}^\alpha 
	= (1-\beta) {\rm div}_N \omega_0^\alpha - \beta \frac{4 \alpha k \sin \psi}{f^\alpha(k)}\, F_2^{\alpha,\beta, *}\, .
\end{equation}

Analogously, we can compute the gradient, Laplacian, and Hessian of the warping function introduced in \eqref{eq:warpconnalphabeta}. From \eqref{eq:gradfalphak}, \eqref{eq:laplfalphak} and \eqref{eq:hessfalphak}, we obtain:
\begin{equation}\label{eq:gradalphabeta}
	\nabla f^\alpha((1-\beta)k) = \frac{\alpha (1-\beta)^2k^2 \sin \psi \cos \psi}{f^\alpha((1-\beta)k)} \, F_1^{\alpha,\beta}\, ,
\end{equation}

\begin{equation}\label{eq:Deltalphabeta}
	-\frac{\Delta f^\alpha((1-\beta)k)}{f^\alpha((1-\beta)k)}
	=
	\frac{(1-\beta)^2 k^2 \sin^2\psi}{f^\alpha((1-\beta)k)^2} - \frac{2 (1-\beta)^2k^2 \cos^2\psi}{f^\alpha((1-\beta)k)^4}\, ,
\end{equation}
and 
\begin{equation}\label{eq:Hessalphabeta}
	\begin{split}
		f^\alpha((1-\beta)k)^{-1}& \Hess f^\alpha((1-\beta)k)
		\\& =
		\left( \frac{(1-\beta)^2 k^2 \cos^2 \psi}{f^\alpha((1-\beta)k)^4} - \frac{(1-\beta)^2 k^2 \sin^2\psi}{f^\alpha((1-\beta)k)^2} \right)\, F_1^{\alpha, \beta, *}\otimes F_1^{\alpha, \beta, *}
		\\ & \quad\quad 
		+
		\frac{(1-\beta)^2 k^2 \cos^2 \psi}{f^\alpha((1-\beta)k)^4}\, F_2^{\alpha,\beta, *}\otimes F_2^{\alpha,\beta, *}\, .
	\end{split}
\end{equation}

\subsubsection{The Ricci curvature}

In order to complete the proof of Proposition \ref{prop:g0alphatog1alpha}, we compute the Ricci tensor of $(S^3 \times S^2, g_\beta^\alpha)$ and show that its eigenvalues are bounded from below by $1/2$ for any $\beta\in[0,1]$, provided that $\alpha<\alpha(k)$.
\medskip

We will use the formulas for the Ricci curvature of circle bundles from Proposition \ref{prop:RicS1bundle} in combination with the expression for the Ricci curvature of the base $\Ric_{N^{\alpha}}$ obtained in Lemma \ref{lemma:RicNhalpha}, the expression for the curvature form $\omega_\beta^\alpha$ (see \eqref{eq:curvalphabeta}), and the expressions of the gradient, the Hessian and the Laplacian of the warping function $f^\alpha((1-\beta)k)$ (see \eqref{eq:gradalphabeta}, \eqref{eq:Hessalphabeta}
 and \eqref{eq:Deltalphabeta}). \\
 
The first observation is that the only nonvanishing off-diagonal values of the Ricci tensor $\Ric_{g_{\beta}^{\alpha}}$ of $(S^3 \times S^2, g_\beta^\alpha)$ in the orthonormal frame $\{U_\beta, Z_2,Z_3, F_1^{\alpha,\beta}, F_2^{\alpha,\beta}\}$ are in the span of $\{U_{\beta},F_2^{\alpha,\beta}\}$.\\

From \eqref{eq:RicS^1warpedbase}, \eqref{eq:RicNZZ_2} and \eqref{eq:normomegaalphabetaZ_2} (notice from \eqref{eq:Hessalphabeta} that the Hessian vanishes in the $Z_i$ directions), we can compute
\begin{align}
	\Ric_{g_\beta^\alpha}(Z_2,Z_2) 
	&= \Ric_{N^{\alpha}}(Z_2 , Z_2) - \frac{f^\alpha((1-\beta) k)^2}{2}|\omega_\beta^\alpha[Z_2]|^2 
	\\&=
	2 + \frac{2}{f^\alpha(k)^2} - 2f^\alpha((1-\beta) k)^2 \left(\frac{1- \beta}{f^\alpha(k)^2} + \beta \right)^2\, .
\end{align}
Moreover, $\Ric_{g_\beta^\alpha}(Z_3,Z_3) = \Ric_{g_\beta^\alpha}(Z_2,Z_2)$.

It is straightforward to check then that
\begin{equation}
\Ric_{g_\beta^\alpha}(Z_2,Z_2)=\Ric_{g_\beta^\alpha}(Z_3,Z_3)\ge 1/2\, ,
\end{equation}
for any $\beta\in [0,1]$, provided that $\alpha\le\alpha(k)$.\\

Again from \eqref{eq:RicS^1warpedbase}, \eqref{eq:RicNZ}, \eqref{eq:normomegaalphabeta} and \eqref{eq:Hessalphabeta} we can compute
\begin{align}\label{eq:RicalphabetaF1}
	\Ric_{g_\beta^\alpha}(F_1^{\alpha,\beta}, F_1^{\alpha,\beta})
	& =
	\Ric_{N^{\alpha}} (F_1^{\alpha}, F_1^{\alpha})
	- \frac{f^\alpha((1-\beta) k)^2}{2}|\omega_\beta^\alpha[F_1^{\alpha,\beta}]|^2 
	\\& \quad
	- \frac{\Hess f^\alpha((1-\beta) k)}{f^\alpha((1-\beta) k)}(F_1^{\alpha,\beta}, F_1^{\alpha,\beta})\, .
\end{align}
Then we notice from \eqref{eq:normomegaalphabeta} and \eqref{eq:Hessalphabeta} that 
\begin{equation*}
\sup_{\alpha,\beta\in [0,1]}\,  \,\Big\{ \frac{f^\alpha((1-\beta) k)^2}{2}|\omega_\beta^\alpha[F_1^{\alpha,\beta}]|^2+\Big\rvert\frac{\Hess f^\alpha((1-\beta) k)}{f^\alpha((1-\beta) k)}(F_1^{\alpha,\beta}, F_1^{\alpha,\beta})\Big\rvert\Big\} <C_1(k)<\infty\, .
\end{equation*}
Hence from \eqref{eq:RicalphabetaF1} and \eqref{eq:RicNZ} we can estimate
\begin{equation}
\Ric_{g_\beta^\alpha}(F_1^{\alpha,\beta}, F_1^{\alpha,\beta})\ge \frac{1}{\alpha^2} -k^2\sin^2\psi- C_1(k) \ge 1/2\, ,
\end{equation}
for any $\beta\in[0,1]$, provided that $\alpha<\alpha(k)$.

\bigskip

We finally compute and estimate the Ricci tensor restricted to the span of $\{U_\beta, F_2^{\alpha,\beta}\}$.
By \eqref{eq:RicS^1warped}, \eqref{eq:Deltalphabeta} and \eqref{eq:normomegaalphabetatot}, for the $S^1$-fiber direction we have
\begin{align}
	\Ric_{g_\beta^\alpha}(U_\beta, U_\beta) & = - \frac{\Delta f^\alpha((1-\beta) k)}{f^\alpha((1-\beta) k)} + \frac{f^\alpha((1-\beta) k)^2}{2}|\omega_\beta^\alpha|^2
	\\&
	= 
	\frac{(1-\beta)^2 k^2 \sin^2\psi}{f^\alpha((1-\beta) k)^2} 
	- \frac{2 (1-\beta)^2 k^2 \cos^2\psi}{f^\alpha((1-\beta) k)^4}
	\\& \quad \quad + 2 f^\alpha((1-\beta)k)^2\left(\left(\beta+  \frac{1-\beta}{f^\alpha(k)^2} \right)^2 +  (1-\beta)^2 \, \frac{k^2 \cos^2 \psi}{f^\alpha(k)^6}\right)\, .
\end{align}
It is then elementary to estimate
\begin{equation}
\Ric_{g_\beta^\alpha}(U_\beta, U_\beta)\ge 
	1 + (1-\beta)^2 k^2 \sin^2 \psi \, ,
\end{equation}
provided that $\alpha\le \alpha(k)$.\\

For the remaining on-diagonal term, again by \eqref{eq:RicS^1warpedbase}, we compute

\begin{align}
	\Ric_{g_\beta^\alpha}(F_2^{\alpha,\beta}, F_2^{\alpha,\beta}) 
	& = \Ric_{N^{\alpha}}(F_2^{\alpha}, F_2^{\alpha}) - \frac{f^\alpha((1 - \beta) k)^2}{2}|\omega_\beta^\alpha[F_2^{\alpha,\beta}]|^2 
	\\&
	\quad \quad - \frac{\Hess f^\alpha((1-\beta) k)}{f^\alpha((1-\beta) k)}(F_2^{\alpha,\beta}, F_2^{\alpha,\beta})\, .
\end{align}
By \eqref{eq:RicNZF_2}, \eqref{eq:normomegaalphabeta} and \eqref{eq:Hessalphabeta} we can estimate
\begin{equation}
\Ric_{g_\beta^\alpha}(F_2^{\alpha,\beta}, F_2^{\alpha,\beta}) \ge \frac{1}{\alpha^2 f^\alpha(k)^2} - 10 k^2 \, ,
\end{equation}
for any $\alpha,\beta\in[0,1]$.\\

Finally, we compute and estimate the off-diagonal term with the help of Proposition \ref{prop:RicS1bundle}, \eqref{eq:curvalphabeta}, \eqref{eq:divomegaalphabeta} and \eqref{eq:gradalphabeta}:
\begin{align}
	\Ric_{g_\beta^\alpha}(U_\beta , F_2^{\alpha,\beta}) 
	& =  - \frac{f^\alpha((1-\beta) k)}{2} {\rm div}_{N^{\alpha}} \omega_\beta^\alpha[F_2^{\alpha,\beta}] + \frac{3}{2} \omega_\beta^\alpha[F_{2}^{\alpha,\beta},  \nabla f^\alpha((1-\beta) k)]
	\\&
	= -\frac{(1-\beta)}{\alpha^2} \frac{ f^\alpha((1-\beta) k)\alpha k \sin \psi}{f^{\alpha}(k)^3} + C_2(k)\, ,
\end{align}
where $|C_2(k)| \le 10 k^3$.
\medskip

It is then elementary to check that the determinant and the trace of the tensor $\alpha^2\Ric_{g^{\alpha}_{\beta}}$ restricted to the span of $\{U_{\beta},F_2^{\alpha,\beta}\}$ are both bigger than $1/2$ for any $\beta\in[0,1]$, provided that $\alpha\le \alpha(k)$. In particular, the eigenvalues of $\Ric_{g^{\alpha}_{\beta}}$ are bigger than $1$ provided that $\alpha\le \alpha(k)\ll 1$.

\medskip

This completes the proof of Proposition \ref{prop:g0alphatog1alpha}. $\qed$

\subsection{Rounding the fibers of $\pi: N \to S^2$}\label{subsec:roundS2}

At this stage, we have a Riemannian submersion $\pi_{(1,k)} : (S^3 \times S^2, g_1^\alpha) \to (N,h^{\alpha})$ induced by a principal $S^1$-bundle metric with totally geodesic fibers of length one and connection form $\eta_1$.\\

An orthonormal basis with respect to the metric $g_1^{\alpha}$ on $S^3\times S^2$ is given by:
\begin{equation}\label{eq:ortogalpha1}
	U:= Z_1 + k \frac{\partial}{\partial \theta}\, , \quad
	Z_2\, , \quad
	Z_3 \, , \quad
	F_1^\alpha:= \frac{1}{\alpha} \frac{\partial}{\partial \psi} \, , \quad
	F_2^\alpha:= - \frac{f^\alpha(k)}{ \alpha \sin \psi} \frac{\partial}{\partial \theta}\, .
\end{equation}

We understand from \eqref{eq:ortogalpha1} that $(N,h^{\alpha})$ has the structure of a Riemannian $S^2$-bundle over a round $S^2$ with totally geodesic fibers. However, the induced metric on the fibers is not round (see Lemma \ref{lemma:curvS2fibers} below for the expression of the Gaussian curvature for the induced metric on the fibers).

\medskip
Our goal in this subsection is to change the geometry $(N, h^\alpha)$ in order to make the fibers of the Riemannian submersion $\pi: N \to S^2$ isometric to round spheres. Meanwhile, we will maintain the connection and the length of fibres for the principal $S^1$-bundle $\pi_{(1,k)}:S^3\times S^2\to N$ fixed.\\

With this aim we introduce the family of metrics $g_\beta^\alpha$, with $\beta\in [1,2]$, defined by the orthonormal frame
\begin{equation}
	U=Z_1+k\frac{\partial}{\partial\theta}\, ,\quad Z_2\, , \quad
	Z_3 \, , \quad
	F_1^{\alpha,\beta} := \frac{1}{\alpha} \frac{\partial}{\partial \psi} \, , \quad
	F_2^{\alpha,\beta}:= -\frac{f^\alpha( k)}{f^\alpha((\beta-1)k)}\frac{1}{\alpha \sin \psi} \frac{\partial}{\partial \theta} \, .
\end{equation}
Moreover, we denote by $h^{\alpha}_{\beta}$, $\beta\in[1,2]$, the unique Riemannian metric on $N$ such that $\pi_{(1,k)}:(S^3\times S^2,g^{\alpha}_{\beta})\to (N,h^{\alpha}_{\beta})$ is a Riemannian submersion.

\begin{proposition}\label{prop:roundS2}
With the notation introduced above, the following properties hold provided that $\alpha<\alpha(k)$:
\begin{itemize}
\item[i)] $\Ric_{g^{\alpha}_{\beta}},\Ric_{h^{\alpha}_{\beta}}>0$ for any $\beta\in[1,2]$;
\item[ii)] $\pi:(N,h^{\alpha}_{\beta})\to (S^2, g_{S^2_{1/2}})$ is a Riemannian submersion with totally geodesic fibers for every $\beta\in[1,2]$. For $\beta=2$ the induced Riemannian metric on the $S^2$ fibers is round; 
\item[iii)] $\pi_{(1,k)}:(S^3\times S^2,g_{\beta}^{\alpha})\to (N,h^{\alpha}_{\beta})$ is a Riemannian principal $S^1$-bundle with totally geodesic fibers and principal connection $\eta_1$, for any $\beta\in[1,2]$.
\end{itemize}
\end{proposition}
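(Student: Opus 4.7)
The plan is to mirror the strategy of Proposition \ref{prop:g0alphatog1alpha}. I would first verify the structural claims (ii) and (iii) directly from the explicit frame, and then estimate the Ricci tensors of $h^\alpha_\beta$ via Proposition \ref{prop:Rictotgeo} and of $g^\alpha_\beta$ via Proposition \ref{prop:RicS1bundle}. For small $\alpha$, the dominant positive contribution comes from the Gaussian curvature of the (nearly round) $S^2$-fiber of $\pi: N \to S^2$, which is of order $1/\alpha^2$ and absorbs all other terms in the Ricci formulas.

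Writing $\sigma := f^\alpha(k)/f^\alpha((\beta-1)k)$, the first key observation is that $\sigma$ depends only on the $(1,k)$-invariant coordinate $\psi$. Hence the $S^1$-action generated by $U = Z_1 + k\,\partial/\partial\theta$ is isometric for every $g^\alpha_\beta$, and $U$ remains a unit Killing field with totally geodesic orbits. Since only the lengths of $F_1^\alpha, F_2^\alpha$ are rescaled while the horizontal distribution $\ker Z_1^*$ is preserved, the connection stays equal to $\eta_1$; this gives (iii). For (ii), since $Z_2, Z_3$ are pulled back from $S^3$ and $\sigma$ is a function on $S^2$, the brackets $[Z_k, F_i^{\alpha,\beta}]$ vanish, and an elementary computation analogous to Lemma \ref{lemma:fiberstotgeoN} shows that the fibers of $\pi: N \to S^2$ remain totally geodesic. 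The horizontal distribution and the base metric $g_{1/2}$ are clearly unaffected. At $\beta = 2$ one has $\sigma \equiv 1$, so the fiber metric becomes $\alpha^2(d\psi^2 + \sin^2\psi\, d\theta^2)$, i.e.\ the round sphere of radius $\alpha$.

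For the Ricci analysis in (i), applying Proposition \ref{prop:Rictotgeo} to $\pi: (N, h^\alpha_\beta) \to (S^2, g_{1/2})$, the fiber-direction eigenvalues of $\Ric_{h^\alpha_\beta}$ equal the Gaussian curvature $K_\beta$ of the fiber metric $\alpha^2 d\psi^2 + (\alpha\sin\psi/\sigma)^2 d\theta^2$ plus an integrability-tensor contribution. A direct computation yields
\begin{equation*}
K_\beta = \alpha^{-2}\left(1 + \sigma_{\psi\psi}/\sigma + 2(\sigma_\psi/\sigma)\cot\psi - 2(\sigma_\psi/\sigma)^2\right),
\end{equation*}
and since $\sigma - 1$, $\sigma_\psi/\sin\psi$, and $\sigma_{\psi\psi}$ are all $O(\alpha^2 k^2)$ uniformly in $\beta \in [1,2]$, one obtains $K_\beta \ge 1/(2\alpha^2)$ for $\alpha \le \alpha(k)$. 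In the base directions $\{Z_2, Z_3\}$, Lemma \ref{lemma:RicNhalpha} gives positivity at $\beta = 1$, and the analogous computation supplemented by $\sigma$-dependent corrections of size $O(k^2)$ yields $\Ric_{h^\alpha_\beta}(Z_k, Z_k) > 0$ throughout the deformation. The invariance under the induced $(1,0)$-circle action on $N$ forces the mixed components between $\{Z_2,Z_3\}$ and $\{F_1^{\alpha,\beta}, F_2^{\alpha,\beta}\}$ to vanish, as in Lemma \ref{lemma:RicNhalpha}, and a direct estimate handles the residual $\Ric(F_1^{\alpha,\beta}, F_2^{\alpha,\beta})$. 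Finally, Proposition \ref{prop:RicS1bundle} applied to $\pi_{(1,k)}: (S^3\times S^2, g^\alpha_\beta) \to (N, h^\alpha_\beta)$ with constant warping function $f \equiv 1$ (so that $\nabla f = 0$ and $\Hess f = 0$) and curvature form $\omega_1 = -2 Z_2^* \wedge Z_3^*$ of constant squared norm $8$ transfers the positive lower bound on $\Ric_{h^\alpha_\beta}$ to $\Ric_{g^\alpha_\beta}$, modulo uniformly bounded corrections coming from $|\omega_1|^2$ that are absorbed for $\alpha \le \alpha(k)$.

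The main technical obstacle, analogous to the one in subsection \ref{subsec:fiblengthconn}, lies in ensuring that all the correction terms generated by the $\sigma$-deformation are bounded in terms of $k$ alone, independently of $\alpha$; then the $1/\alpha^2$ fiber-curvature term dominates. Since $\sigma$ and its $\psi$-derivatives depend on $\alpha$ only through the uniformly small factor $\alpha^2 k^2$, and their potentially singular combinations with $\cot\psi$ vanish at the poles, this balance works out for $\alpha \le \alpha(k)$ small enough.
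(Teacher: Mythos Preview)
Your plan is essentially the paper's own proof: check (ii) and (iii) from the explicit frame, control $\Ric_{h^\alpha_\beta}$ via Proposition~\ref{prop:Rictotgeo} for the submersion $\pi:(N,h^\alpha_\beta)\to S^2_{1/2}$ using that the fiber curvature is $\ge 1/(2\alpha^2)$, and then lift to $\Ric_{g^\alpha_\beta}$ via Proposition~\ref{prop:RicS1bundle} with $f\equiv 1$ and connection $\eta_1$.

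One point is missing from your outline and deserves attention. In Proposition~\ref{prop:RicS1bundle} with $f\equiv 1$ the mixed term is
\[
\Ric_{g^\alpha_\beta}(U,X) \;=\; -\tfrac{1}{2}\,{\rm div}_{(N,h^\alpha_\beta)}\omega_1[X]\, ,
\]
and this divergence is taken with respect to the \emph{varying} metric $h^\alpha_\beta$, even though $\omega_1$ is a fixed $2$-form. You do not say how to control it; ``corrections coming from $|\omega_1|^2$'' does not cover this term. The paper handles it via Lemma~\ref{lemma: div}: since $\omega_1 = 2\,\pi^*\Vol_{S^2_{1/2}}$, its divergence on $(N,h^\alpha_\beta)$ is $4\langle A_\beta(Z_2,Z_3),\,\cdot\,\rangle$ on vertical vectors and zero on horizontal ones, where $A_\beta$ is the integrability tensor of $\pi:(N,h^\alpha_\beta)\to S^2_{1/2}$. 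The explicit computation $A_\beta(Z_2,Z_3)=\tfrac{f^\alpha((\beta-1)k)}{f^\alpha(k)}\,\alpha k\sin\psi\,F_2^{\alpha,\beta}$ then gives $|{\rm div}_N\omega_1|=O(\alpha k)$, which is what makes the off-diagonal small. Two smaller remarks: $|\omega_1|^2=4$, not $8$; and the $|\omega_1[Z_i]|^2$ correction in the $Z_i$-directions is not ``absorbed for small $\alpha$'' but rather offset by $\Ric_{h^\alpha_\beta}(Z_i,Z_i)\approx 4$, while $\omega_1[F_j]=0$ in the directions where the $1/\alpha^2$ dominates. Finally, your symmetry argument for the vanishing of $\Ric_{h^\alpha_\beta}(Z_i,F_j)$ is not quite enough as stated; the paper instead checks directly that ${\rm div}_{S^2_{1/2}}A_\beta[Z_i]$ is horizontal, using $\mathcal H\nabla_{Z_i}Z_j=0$ and that $\nabla_{Z_i}F_2^{\alpha,\beta}$ is horizontal.
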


The remainder of this section is aimed at proving Proposition \ref{prop:roundS2}.\\ 
Items (ii) and (iii) will follow from the very construction and we will focus on proving (i). In order to control the Ricci curvature of the base $(N,h^{\alpha}_{\beta})$ and of the total space $(S^3\times S^2,g^{\alpha}_{\beta})$ we will rely again on the formulas for the Ricci curvature of Riemannian submersions from Proposition \ref{prop:Rictotgeo} (see also Proposition \ref{prop:RicS1bundle}).

\subsubsection{The geometry of $(N,h_\beta^\alpha)$}

The base space $(N, h_\beta^\alpha)$ has the structure of an $S^2$-bundle over $S^2$, and the projection $\pi:(N,h^{\alpha}_{\beta})\to S^2_{1/2}$ is a Riemannian submersion.

In $(S^3\times S^2,g_{\beta}^{\alpha})$, $\{Z_2,Z_3,F_1^{\alpha,\beta},F_2^{\alpha,\beta}\}$ span the horizontal distribution. Moreover, $Z_2$ and $Z_3$ represent the (orthonormal) horizontal directions associated with the base $S^2$. On the other hand $F_1^{\alpha,\beta}$ and $F_2^{\alpha,\beta}$ induce a vertical orthonormal frame on $(N,h^{\alpha}_{\beta})$ with respect to the Riemannian submersion $\pi:(N,h^{\alpha}_{\beta})\to S^2_{1/2}$. \\

With the Koszul formula it is easy to check that the fibers of the Riemannian submersion $\pi:(N,h^{\alpha}_{\beta})\to S^2_{1/2}$ are totally geodesic for every $\alpha$ and $\beta$, see \eqref{eq:totgeod1} above for an analogous computation.\\

For the induced Riemannian metrics on the $S^2$ fibers we have the following:

\begin{lemma}\label{lemma:curvS2fibers}
The Ricci curvature of the $S^2$-fibers of the Riemannian submersion $\pi:(N,h^{\alpha}_{\beta})\to S^2_{1/2}$ is given by 
\begin{equation}
	\Ric_{\mathrm{fib}}=-\frac{1}{\alpha^2}\left(\frac{f^\alpha((\beta-1)k)\sin\psi}{f^{\alpha}( k)}\right)''\cdot \frac{f^{\alpha}( k)}{f^\alpha((\beta-1)k) \sin\psi}\, g_{\mathrm{fib}}\, .
\end{equation}
In particular, the induced metric is round if $\beta=2$ and it satisfies $\Ric_{\mathrm{fib}}\ge 1/(2\alpha^2)$ for any $\beta\in[1,2]$ provided that $\alpha\le\alpha(k)$.
\end{lemma}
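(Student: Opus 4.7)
The plan is to observe that the induced fiber metric is that of a surface of revolution in $(\psi,\theta)$ coordinates and then apply the classical Gaussian curvature formula. First I will read off the fiber metric from the given orthonormal frame. Since $F_1^{\alpha,\beta}=\alpha^{-1}\partial_\psi$ and $F_2^{\alpha,\beta}=-\frac{f^\alpha(k)}{f^\alpha((\beta-1)k)}\frac{1}{\alpha\sin\psi}\partial_\theta$ are orthonormal and vertical for $\pi:(N,h_\beta^\alpha)\to S^2_{1/2}$, the induced metric on each $S^2$-fiber reads
\begin{equation}
g_{\mathrm{fib}}=\alpha^2\, d\psi^2+\alpha^2\sin^2\psi\,\frac{f^\alpha((\beta-1)k)^2}{f^\alpha(k)^2}\,d\theta^2.
\end{equation}

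Next, introducing the arc-length parameter $s:=\alpha\psi$ along a meridian, the metric takes the warped form $g_{\mathrm{fib}}=ds^2+H(s)^2\,d\theta^2$ with $H(s)=\alpha\,h(s/\alpha)$ where $h(\psi):=\sin\psi\cdot f^\alpha((\beta-1)k)/f^\alpha(k)$. For any such surface of revolution the Gaussian curvature is $K=-H''(s)/H(s)$, and the chain rule gives $K=-h''(\psi)/(\alpha^2 h(\psi))$. Since the fibers are two-dimensional we have $\Ric_{\mathrm{fib}}=K\,g_{\mathrm{fib}}$, which is exactly the claimed formula. I will also check that $h$ extends smoothly across the poles $\psi=0,\pi$: writing $h(\psi)=\sin\psi\cdot\phi(\sin^2\psi)$ with $\phi$ smooth and positive makes it manifest that $h''/h$ has removable singularities there.

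For the two remaining assertions I will proceed as follows. When $\beta=2$, one has $f^\alpha((\beta-1)k)=f^\alpha(k)$, so $h(\psi)=\sin\psi$ and hence $g_{\mathrm{fib}}=\alpha^2(d\psi^2+\sin^2\psi\, d\theta^2)$ is the round sphere of radius $\alpha$, with $K=1/\alpha^2$. For the uniform estimate $\Ric_{\mathrm{fib}}\ge 1/(2\alpha^2)$, equivalently $-h''(\psi)/h(\psi)\ge 1/2$, I will expand: $\phi(u)=\sqrt{(1+\alpha^2(\beta-1)^2k^2u)/(1+\alpha^2k^2u)}=1+O_k(\alpha^2)$ in $C^2([0,1])$, uniformly in $\beta\in[1,2]$. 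Consequently $h\to\sin\psi$ in $C^2([0,\pi])$ (with the smoothness at the poles used above), and since $-(\sin\psi)''/\sin\psi\equiv 1$, for $\alpha\le\alpha(k)$ sufficiently small we obtain $-h''/h\ge 1/2$ as desired.

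The main (mild) obstacle will be ensuring uniformity in $\beta\in[1,2]$ and smoothness at the poles of the fiber; once the expansion of $\phi$ in $\alpha$ is in place, the bound is immediate. No Riemannian-submersion machinery is needed beyond reading off the induced metric, so the proof reduces to an essentially one-variable calculation.
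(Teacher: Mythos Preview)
Your proof is correct. The paper states this lemma without proof, treating it as a routine computation; your approach---reading off the warped surface-of-revolution metric from the orthonormal vertical frame and applying the classical formula $K=-H''/H$, together with the $C^2$ perturbation argument in $\alpha$ for the lower bound---is exactly the elementary verification the authors have in mind, and your care about removable singularities at the poles via the substitution $h(\psi)=\sin\psi\cdot\phi(\sin^2\psi)$ is the right way to make the estimate uniform on $[0,\pi]$.
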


In order to compute and estimate the Ricci curvature of $(N,h^{\alpha}_{\beta})$ with the formulas for Riemannian submersions from Proposition \ref{prop:Rictotgeo}, we compute the curvature of the induced connection:
\begin{equation}\label{eq:curvatureS2S2}
	A(Z_2,Z_3) 
	=  \frac{1}{2} \mathcal{V}[Z_2,Z_3] 
	=  \frac{f^\alpha((\beta-1)k)}{f^\alpha( k)} \alpha k \sin \psi\,  F_2^{\alpha,\beta} \, .
\end{equation}

Hence,
\begin{align}
	& (A_{Z_2},A_{Z_2}) = (A_{Z_3}, A_{Z_3}) = \frac{f^\alpha((1-\beta)k)^2}{f^\alpha( k)^2} \alpha^2 k^2 \sin^2 \psi\, ,
	\\
	&(A_{Z_2},A_{Z_3}) = 0\, ,
	\\
	&(AF_2, AF_2) = 2 \frac{f^\alpha((\beta - 1)k)^2}{f^\alpha(k)^2} \alpha^2 k^2 \sin^2 \psi\, ,
	\\
	&(AF_1, AF_1) = (AF_1, AF_2) = 0 \, .
\end{align}

\bigskip

Moreover, an easy application of the Koszul formula shows that $\mathcal H \nabla_{Z_i} Z_j = 0$ and $\nabla_{Z_i} F_2^{\alpha,\beta}$ is horizontal for $i,j=2,3$. Hence
\begin{align}
 {\rm div}_{S^2_{1/2}} A[Z_i]  & = \nabla_{Z_2}A[Z_2,Z_i] + \nabla_{Z_3}A[Z_3,Z_i]
 \\  \nonumber& = \nabla_{Z_2}(A[Z_2,Z_i]) + \nabla_{Z_3}(A[Z_3,Z_i]) 
  \\  \nonumber& \quad
 -A[\nabla_{Z_2} Z_2, Z_i] - A[\nabla_{Z_3}Z_3, Z_i]-
 A[Z_2, \nabla_{Z_2}Z_i]-
 A[Z_3, \nabla_{Z_3}Z_i]
 \\& = \nabla_{Z_2}(A[Z_2,Z_i]) + \nabla_{Z_3}(A[Z_3,Z_i]) \, ,
\end{align}
is horizontal for $i=2,3$.
Therefore, by Proposition \ref{prop:Rictotgeo}, the Ricci tensor $\Ric_{h_\beta^\alpha}$ is diagonal in any orthonormal frame of $(N,h^{\alpha}_{\beta})$ induced by $\{Z_2,Z_3,F^{\alpha,\beta}_1,F^{\alpha,\beta}_2\}$, with
\begin{align}
	&\Ric_{h_\beta^\alpha}(F_1^{\alpha,\beta},F_1^{\alpha, \beta})
	=
	\Ric_{\mathrm{fib}}(F_1^{\alpha,\beta},F_1^{\alpha, \beta})\, ,
	\\
	&\Ric_{h_\beta^\alpha}(F_2^{\alpha,\beta},F_2^{\alpha, \beta})
	= \Ric_{\mathrm{fib}}(F_2^{\alpha,\beta},F_2^{\alpha, \beta}) + 2\frac{f^\alpha((1-\beta)k)^2}{f^\alpha( k)^2} \alpha^2 k^2 \sin^2 \psi\, ,
	\\
	&\Ric_{h^\alpha_\beta}(Z_i,Z_i)
	= 4-2\frac{f^\alpha((1-\beta)k)^2}{f^\alpha(k)^2} \alpha^2 k^2 \sin^2 \psi\, .
\end{align}
Taking into account Lemma \ref{lemma:curvS2fibers}, it is elementary to check that $\Ric_{h^{\alpha}_{\beta}}\ge 3$ provided that $\alpha \le \alpha(k)$. 

\subsubsection{The Ricci curvature of $(S^3\times S^2, g_\beta^\alpha)$}

In order to complete the proof of Proposition \ref{prop:roundS2} it remains to compute and estimate the Ricci curvature of $(S^3\times S^2,g^{\alpha}_{\beta})$.\\

We consider the Riemannian submersion $\pi_{(1,k)}:(S^3\times S^2,g^{\alpha}_{\beta})\to (N,h^{\alpha}_{\beta})$. Its fibers are totally geodesic and the induced curvature $2$-form is $\omega_1=-2Z_2^*\wedge Z_3^*$.\\

Since $\Ric_{h_\beta^\alpha} \ge 3$, as we established above, from \eqref{eq:RicS^1warped} we deduce that
\begin{equation}
	 \Ric_{g_\beta^\alpha}(U,U) = \frac{1}{2}|\omega_1|^2 = 2
\end{equation}
for the vertical direction and 
\begin{equation}
	\Ric_{g_\beta^\alpha}(X,X) \ge 3 - \frac{1}{2} |\omega_1|^2 \ge 1\, ,
\end{equation}
for every horizontal direction $X$.\\

Moreover, by Lemma \ref{lemma: div} and \eqref{eq:curvatureS2S2}, the divergence of $\omega_1$ with respect to the metric $h_\beta^\alpha$ is given by
\begin{equation}
	{\rm div}_N \omega_1 = - 2k \frac{f^\alpha((\beta-1)k)}{f^\alpha( k)} \alpha \sin \psi\, F_2^{\alpha,\beta,*} \, .
\end{equation}
Hence, the only off-diagonal component of the Ricci tensor, $\Ric_{g_\beta^\alpha}(U,F_2^{\alpha,\beta})$, can be made as small as we wish provided $\alpha$ is small enough.

This completes the proof of Proposition \ref{prop:roundS2}. $\qed$\\

\subsection{Trivializing the connection of $\pi : N \to S^2$ for $k$ even}\label{subsec:flattenconnection}

After the application of Proposition \ref{prop:roundS2}, $\pi_{(1,k)} : (S^3\times S^2, g_2^\alpha) \to (N, h_2^\alpha)$ has the structure of a Riemannian principal $S^1$-bundle with totally geodesic fibers of length one, and connection form $\eta_1$.\\ 
Its base space $(N,h^{\alpha}_2)$ has the structure of a Riemannian $S^2$-bundle $\pi: (N,h_2^\alpha) \to (S^2, g_{S^2_{1/2}})$ with totally geodesic $S^2$-fibers with round metric $g_{S^2_\alpha}$.\\

An orthonormal frame for $(S^3\times S^2,g_2^{\alpha})$ is given by
\begin{equation}
	U:= Z_1 + k \frac{\partial}{\partial \theta}\, , \quad
	Z_2\, , \quad
	Z_3 \, , \quad
	F_1^\alpha:= \frac{1}{\alpha} \frac{\partial}{\partial \psi} \, , \quad
	F_2^\alpha:= - \frac{1}{\alpha \sin \psi} \frac{\partial}{\partial \theta}\, .
\end{equation}
Above, $U$ is the vertical unit direction for the $S^1$-bundle $\pi_{(1,k)}:(S^3\times S^2)\to N$. Moreover, $F_1^\alpha$, $F_2^\alpha$ are tangent to the fibers of $\pi: N \to S^2$, and $Z_2, Z_3$ are horizontal.\\
We shall denote by $g_{\rm fib}^\alpha$ the induced round metric on the $S^2$-fibers of the Riemannian submersion $\pi:(N,h_2^{\alpha})\to S^2_{1/2}$.\\

For the remainder of this section we are going to assume that $k\in\mathbb{Z}$ is even. 
Under this assumption we understand from Lemma \ref{lemma:diffeostruct} that $\pi:N\to S^2$ is isomorphic (as an $S^2$-bundle) to the trivial $S^2$-bundle $\pi':S^2\times S^2\to S^2$.\\

Our next goal is to modify the connection of the base space $\pi:N\to S^2$ until it becomes flat. 
Once the connection of the Riemannian submersion $\pi:N\to S^2$ has become flat, $N$ will be isometric to the product of two round spheres. If we maintain the connection and the length of fibres for the $S^1$-bundle $\pi_{(1,k)}:S^3\times S^2\to N$ fixed, it will follow that the end metric is equivariantly isometric to $\pi_{(1,0)}:(S^3\times S^2,g_{S^3_{1}\times S^2_{1/2}})\to (S^2\times S^2,g_{S^2_{1/2}\times S^2_{1/2}})$, as we claimed.

\begin{proposition}\label{prop:flattenconnection}
Let us assume that $k\in\mathbb{Z}$ is even.
Provided that $\alpha<\alpha(k)$, there exist smooth families of Riemannian metrics $(S^3\times S^2,g_{\beta}^{\alpha})$ and $(N,h^{\alpha}_{\beta})$ for $\beta\in [2,3]$ such that the following hold:
\begin{itemize}
\item[i)] $\Ric_{g^{\alpha}_{\beta}},\Ric_{h^{\alpha}_{\beta}}>0$ for any $\beta\in[2,3]$;
\item[ii)] $\pi:(N,h^{\alpha}_{\beta})\to (S^2, g_{S^2_{1/2}})$ is a Riemannian submersion with totally geodesic and round $S^2$-fibers for every $\beta\in[2,3]$. For $\beta=3$, the induced connection is flat and $(N,h^{\alpha}_{3})$ is isomorphic (as a Riemannian $S^2$-bundle) to
\begin{equation}
	\pi_1:(S^2\times S^2,g_{S^2_{1/2}\times S^2_{1/2}})\to (S^2, g_{S^2_{1/2}})\, ,
	\quad \pi_1(x,y) := x \, ;
\end{equation}
\item[iii)] $\pi_{(1,k)}:(S^3\times S^2,g_{\beta}^{\alpha})\to (N,h^{\alpha}_{\beta})$ is a Riemannian principal $S^1$-bundle with totally geodesic fibers and principal connection $\eta_1$, for any $\beta\in[2,3]$. For $\beta=3$, $\pi_{(1,k)}:(S^3\times S^2,g_{3}^{\alpha})\to (N,h^{\alpha}_{3})$ is equivariantly isometric to $\pi_{(1,0)}:(S^3\times S^2,g_{S^3_1\times S^2_{1/2}})\to (S^2\times S^2,g_{S^2_{1/2}\times S^2_{1/2}})$.
\end{itemize}
\end{proposition}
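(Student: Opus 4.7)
The plan is to use the fact that, by Lemma \ref{lemma:diffeostruct}, when $k$ is even the $S^2$-bundle $\pi:N\to S^2$ is topologically trivial, so we have a smooth bundle trivialization $\Phi:N\to S^2\times S^2$ covering the identity on $S^2_{1/2}$. Pulling back the product metric $g_{S^2_{1/2}}\oplus g_{S^2_\alpha}$ through $\Phi$ gives a reference Riemannian submersion structure on $\pi:N\to S^2_{1/2}$ with the same totally geodesic round $S^2_\alpha$-fibers and the same base metric as $h_2^\alpha$, but whose horizontal distribution comes from the product splitting. At the level of the associated principal $O(3)$-bundle over $S^2_{1/2}$, the two submersion metrics correspond to two principal connections $\mathcal{A}_2$ and $\mathcal{A}_3$, with $\mathcal{A}_3$ the flat reference connection determined by $\Phi$. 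I would then set $\mathcal{A}_\beta := (3-\beta)\mathcal{A}_2 + (\beta-2)\mathcal{A}_3$ for $\beta\in[2,3]$, let $h_\beta^\alpha$ be the submersion metric on $N$ determined by $\mathcal{A}_\beta$ (with fiber and base metrics unchanged), and let $g_\beta^\alpha$ be the unique metric on $S^3\times S^2$ making $\pi_{(1,k)}:(S^3\times S^2,g_\beta^\alpha)\to(N,h_\beta^\alpha)$ a Riemannian principal $S^1$-bundle with totally geodesic unit-length fibers and connection $\eta_1=Z_1^*$. By construction items (ii) and (iii) of the proposition hold tautologically, so the content is item (i).

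\textbf{Ricci positivity.} Since $\mathcal{A}_\beta$ is linear in $\beta$, the integrability tensor $A_\beta$ of $\pi:(N,h_\beta^\alpha)\to(S^2_{1/2},g_{S^2_{1/2}})$ depends linearly on $\beta$ as well; at $\beta=2$ it satisfies $|A_2|\le C(k)\alpha$ by \eqref{eq:curvatureS2S2}, and it vanishes at $\beta=3$, so $|A_\beta|\le C(k)\alpha$ throughout. Applying the O'Neill formulas of Proposition \ref{prop:Rictotgeo} together with $\Ric_{\rm fib}=\alpha^{-2}g_{\rm fib}^\alpha$ and $\Ric_{S^2_{1/2}}=4\, g_{S^2_{1/2}}$ yields $\Ric_{h_\beta^\alpha}>0$ for $\alpha<\alpha(k)$. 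For the total space I would use the circle-bundle formulas of Proposition \ref{prop:RicS1bundle} with $f\equiv 1$ and curvature $\omega_1=-2Z_2^*\wedge Z_3^*=-2\pi^*\mathrm{Vol}_{S^2_{1/2}}$ pulled back from the base: the vertical Ricci is $\tfrac{1}{2}|\omega_1|^2=2$; the Ricci in the fiber-type horizontal directions $F_1^\alpha,F_2^\alpha$ inherits the $\alpha^{-2}$-bound from $\Ric_{h_\beta^\alpha}$ since $\omega_1[F_i^\alpha]=0$; and on the base-type directions we compute
\begin{equation*}
\Ric_{g_\beta^\alpha}(Z_i,Z_i)=\Ric_{h_\beta^\alpha}(Z_i,Z_i)-\tfrac{1}{2}|\omega_1[Z_i]|^2=(4-2|A_\beta|^2)-2=2-O(\alpha^2k^2)>0.
\end{equation*}
The only nontrivial off-diagonal term, $\Ric_{g_\beta^\alpha}(U,F_i^\alpha)=-\tfrac{1}{2}(\mathrm{div}_N\omega_1)(F_i^\alpha)$, is controlled by $|A_\beta|$ through Lemma \ref{lemma: div}, and is dominated by the large $\alpha^{-2}$ diagonal entry on $F_i^\alpha$ via a standard $2\times 2$ matrix estimate.

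\textbf{Conclusion at $\beta=3$ and main obstacle.} At $\beta=3$ the integrability tensor $A_3$ vanishes, so $(N,h_3^\alpha)$ is isometric through $\Phi$ to the Riemannian product $(S^2_{1/2}\times S^2_\alpha, g_{S^2_{1/2}}\oplus g_{S^2_\alpha})$, and the principal $S^1$-bundle $\pi_{(1,k)}:(S^3\times S^2,g_3^\alpha)\to(N,h_3^\alpha)$ has curvature pulled back entirely from the $S^2_{1/2}$ factor; hence it has first Chern class $(\pm 1,0)\in H^2(S^2\times S^2;\dZ)$, is principally isomorphic to the standard Hopf-times-identity bundle $\pi_{(1,0)}:S^3_1\times S^2_\alpha\to S^2_{1/2}\times S^2_\alpha$, and carries the corresponding Hopf connection up to gauge. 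This produces a bundle isometry conjugating the $(1,k)$-action on the source to the $(1,0)$-action on the target, and any residual ambiguity is a gauge transformation of the target bundle, hence isotopic to the identity by Lemma \ref{lemma:Gauge}. The main obstacle in the proof is the delicate balance controlling $\Ric_{g_\beta^\alpha}(Z_i,Z_i)$: the $\omega_1$-correction $-\tfrac{1}{2}|\omega_1[Z_i]|^2=-2$ exactly cancels half of the base Ricci $\Ric_{S^2_{1/2}}=4$, leaving only $2-O(|A_\beta|^2)$ of headroom; this is precisely where the evenness of $k$ must be invoked, to make the flat reference connection $\mathcal{A}_3$ available so that $A_\beta$ can be continuously sent to zero, and why $\alpha$ must be taken smaller than a constant depending on $k$.
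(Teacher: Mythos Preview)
Your approach follows the same outline as the paper's: interpolate between the given Ehresmann connection $\Phi_2$ on $\pi:N\to S^2$ and a flat one $\Phi_3$ coming from a trivialization, and keep the $S^1$-bundle structure upstairs fixed with connection $\eta_1$. The paper's proof differs in that it introduces an \emph{additional} fiber-rescaling function $\delta(\beta)$ (with $\delta(2)=1$, $\delta(3)=1/(2\alpha)$) and uses Corollary~\ref{cor:canonicalvariation} to make the $A$- and $\mathrm{div}\,A$-terms small by taking $\delta(\beta)$ small in the interior, rather than relying on $\alpha$-smallness directly. Your streamlined version without $\delta(\beta)$ is legitimate, but your justification of the key bound contains a genuine error.

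You assert that ``since $\mathcal{A}_\beta$ is linear in $\beta$, the integrability tensor $A_\beta$\ldots depends linearly on $\beta$ as well''. This is false: the structure group of the $S^2$-bundle is $SO(3)$, and for a non-abelian principal connection the curvature $F_{\mathcal{A}_\beta}=F_{\mathcal{A}_2}+(\beta-2)\,d_{\mathcal{A}_2}a+\tfrac{(\beta-2)^2}{2}[a\wedge a]$ has a quadratic term in $a=\mathcal{A}_3-\mathcal{A}_2$. Since the trivialization $\Phi$ is a purely topological choice, there is no reason for $|a|$ (and hence $|[a\wedge a]|$) to be of order $\alpha$; in fact $a$ is $\alpha$-independent. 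So your bound $|A_\beta|\le C(k)\alpha$ does \emph{not} follow from the endpoint values by linearity. The conclusion is nonetheless correct, for a different reason: both $\Phi_2=\mathrm{span}(Z_2,Z_3)$ and $\Phi_3$ are $\alpha$-independent, hence so is the $\mathfrak{so}(3)$-curvature $F_{\mathcal{A}_\beta}$, which is therefore bounded by some $C(k)$ uniformly in $\beta\in[2,3]$ by compactness; the integrability tensor $A_\beta$ on the associated bundle with $S^2_\alpha$-fibers then satisfies $|A_\beta|_{h_\beta^\alpha}\le \alpha\cdot C(k)$ because Killing fields on $S^2_\alpha$ have length $O(\alpha)$. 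Equivalently, apply Corollary~\ref{cor:canonicalvariation} with $t=\alpha^2$ to the $\alpha$-independent reference submersion. This is exactly the mechanism the paper exploits via $\delta(\beta)$; you should replace the linearity claim by this argument.

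Two smaller points: you need the trivialization $\Phi$ to be fiber-isometric (so that the pulled-back metric really has round $S^2_\alpha$-fibers), which requires a word since Lemma~\ref{lemma:diffeostruct} only gives a smooth trivialization; and your endpoint metric is $g_{S^3_1\times S^2_\alpha}$ rather than $g_{S^3_1\times S^2_{1/2}}$ as in the statement, so you must append a final rescaling of the $S^2$-factor (the paper absorbs this into $\delta(3)=1/(2\alpha)$).
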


For the remainder of the section, we will be concerned with the proof of Proposition \ref{prop:flattenconnection}. We will assume throughout that $\alpha<\alpha(k)$ so that all the previous steps of our construction apply.\\

Let us define the family of Riemannian metrics $h^{\alpha}_{\beta}$ and $g_{\beta}^{\alpha}$ so that conditions (ii) and (iii) are satisfied. In the next two subsections we will prove that also condition (i) above is met, i.e. the Ricci curvatures of these Riemannian metrics are positive, after a suitable choice of some parameters. \\

By Lemma \ref{lemma:diffeostruct}, $\pi:N\to S^2$ admits a flat Ehresmann connection, that we shall denote by $\Phi_3$. We denote by $\Phi_2$ the Ehresmann connection on $\pi:N\to S^2$ induced by the Riemannian submersion $\pi:(N^{\alpha}_{2}, h_2^\alpha)\to (S^2,g_{S^2_{1/2}})$.\\

We denote by $\Phi_{\beta}$, $\beta\in[2,3]$ the affine combination of $\Phi_2$ and $\Phi_3$. Notice that $\Phi_{\beta}$ is an Ehresmann connection for any $\beta\in [2,3]$, as the fiber of the bundle is compact.\\ 
We introduce a smooth and positive function $\delta:[2,3]\to (0,\infty)$, which we shall specify later in the construction in order to obtain positively Ricci curved metrics. We assume that $\delta(2)=1$ and $\delta(3)=1/(2\alpha)$.\\

For any $\beta\in [2,3]$ we let $h^{\alpha}_{\beta}$ be the unique Riemannian metric on $N$ such that $\pi:(N,h^{\alpha}_{\beta})\to (S^2,g_{S^2_{1/2}})$ is a Riemannian submersion with: 
\begin{itemize}
\item[a)] totally geodesic fibers with induced metric $\delta(\beta)^2 g_{S^2_{\alpha}}$; 
\item[b)] induced connection $\Phi_{\beta}$.
\end{itemize}
Moreover, we let $g^{\alpha}_{\beta}$ be the unique Riemannian metric on $(S^3\times S^2)$ such that $\pi_{(1,k)}:(S^3\times S^2,g^{\alpha}_{\beta})\to (N,h^{\alpha}_{\beta})$ is a Riemannian submersion with totally geodesic fibers of length $2\pi$ and principal connection $\eta_1$.\\

By the very construction, $\pi:(N,h^{\alpha}_3)\to (S^2, g_{S^2_{1/2}})$ is isomorphic as a Riemannian fiber bundle to $\pi_1:(S^2 \times S^2, g_{S^2_{1/2}\times S^2_{1/2}})\to (S^2, g_{S^2_{1/2}})$.
	This means that there exists an isometry $ \Psi : (N,h^{\alpha}_3) \to (S^2 \times S^2, g_{S^2_{1/2}\times S^2_{1/2}})$ such that $\pi_1 \circ \Psi = \pi$.
	
	Then the $S^1$-bundles $\pi_{(1,k)}: S^3 \times S^2 \to N$ and $\Psi^* \pi_{(1,0)} : S^3 \times S^2 \to N$ are isomorphic as they arise from the same cohomology class. In particular, there exists an $S^1$-equivariant diffeomorphism $\hat \Psi : S^3 \times S^2 \to S^3 \times S^2$ with
	$\hat \Psi(\theta \cdot_{(1,k)}(s_1,s_2)) = \theta \cdot_{(1,0)}(\hat \Psi(s_1,s_2))$ whose induced mapping on the quotient is given by $\Psi : N \to S^2 \times S^2$.\\	
	We claim that, up to composition with a Gauge transformation, $\hat \Psi$ is an isometry. It is enough to check that $\hat \Psi$ pulls back $\eta_c$ to $\eta_1$, where $\eta_c$ is the Hopf connection on the first factor in $S^3 \times S^2$.

	As the connection induced by $\pi$ is flat by the very construction, the principal $S^1$ connection $\eta_1$ is a Yang-Mills connection for $\pi_{(1,k)}:(S^3\times S^2,g^{\alpha}_{3})\to (N,h^{\alpha}_{3})$, by Lemma \ref{lemma: div}. Hence $d \hat \Psi^* \eta_c = d\eta_1$, since both forms are Hodge harmonic and $\Psi$ is an isometry. In particular, $\hat \Psi^* \eta_c$ and $\eta_1$ differ by the differential of a smooth function $h: N \to \mathbb{R}$.\\ 
	We can assume $h=0$ by composing $\hat \Psi$ with a suitable Gauge transformation.
It is then clear that (iii) holds. Hence our only remaining goal is to establish positivity of the Ricci curvatures.

\subsubsection{The Ricci curvature of $(N, h_\beta^\alpha)$}

We prove that, up to choosing the $S^2$-fibers' size $\delta(\beta)$ small enough in the interior of the interval $[2,3]$, we have that $\Ric_{h_\beta^\alpha} \ge 3$ for every $\beta\in [2,3]$.\\

From Corollary \ref{cor:canonicalvariation}, we can estimate
\begin{align}
	&\Ric_{h_{\beta}^\alpha}(V,V) \ge  \, \frac{1}{\alpha^2 \delta(\beta)^2}\, ,
	\\
	&\Ric_{h_{\beta}^\alpha}(X,V)=  \, \delta(\beta) g_{S^2_{\alpha}}\left({\rm div}_{S^2_{\alpha}}A_\beta[X] , V \right)\, , 
	\\
	&\Ric_{h_{\beta}^{\alpha}}(X,X)= \, 4 -2 \delta(\beta) g_{S^2_{\alpha}}\left((A_\beta)_X,(A_\beta)_X\right)\, ,
\end{align}
for unit horizontal directions $X$, and vertical directions $V$.

As $\alpha$ is fixed, it is clear that $g_{S^2_{\alpha}}\left({\rm div}_{S^2}A_\beta[X], V \right)$, $g_{S^2_{\alpha}}\left((A_\beta)_X,(A_\beta)_X\right)$ are uniformly bounded on unitary vectors. Hence, we can choose $\delta(\beta)$ small enough in the interior of the interval $[2,3]$ (depending on $\alpha$) to obtain the sought conclusion.

\subsubsection{The Ricci curvature of $(S^3\times S^2, g_\beta^\alpha)$}\label{subsubsec: Ric h trivializing}

Let us check that $\Ric_{g_\beta^\alpha} \ge 2$ for every $\beta\in [2,3]$, which will complete the proof of Proposition \ref{prop:flattenconnection}.\\

By \eqref{eq:RicS^1warped} and the estimates of the previous subsection, we can write
\begin{align}\label{eq:Ric trivilizing}
	& \Ric_{g^{\alpha}_{\beta}}(U,U) = 2\, ,
	\\
\label{eq:Ric trivilizingoffdiag}	& \Ric_{g^{\alpha}_{\beta}}(U,X) = - \frac{1}{2}  {\rm div}_N \omega_1[X]\, ,
	\\
\label{eq:Ric trivilizingbase}	& \Ric_{g^{\alpha}_{\beta}}(X,X) \ge 3 - \frac{1}{2}|\omega_1[X]|^2 \, ,
\end{align}
for every unitary horizontal vector field $X$. 
The delicate point that we need to address in order to establish the claimed positivity is that ${\rm div}_N \omega_1$ and $|\omega_1[X]|^2$ depend on the geometry of $(N, h_\beta^\alpha)$, which is no longer completely explicit.

\medskip

Let us begin with $|\omega_1[X]|^2$. As observed in \eqref{eq:omega_1 vol}, we have
\begin{equation}
	\omega_1 = 2 \pi^* {\rm Vol_{S^2_{1/2}}} \, .
\end{equation}
Since the metric on the base space $S^2$ has not been changed, we have
\begin{equation}
	|\omega_1|^2 = 4 |{\rm Vol_{S^2_{1/2}}}|^2 = 4 \, ,
\end{equation}
independently of $\beta\in[2,3]$.\\

In order to understand ${\rm div}_N \omega_1={\rm div}_{(N,h^{\alpha}_{\beta})}\omega_1$, we apply Lemma \ref{lemma: div} to deduce that
\begin{align}
	{\rm div}_N \omega_1[X] = 
	2 \delta(\beta) g_{\rm fib}^\alpha( A(X_1^\beta,X_2^\beta), X)\, , 
\end{align}
when $X$ is tangent to the fibers of $\pi : N \to S^2$, and ${\rm div}_N \omega_1[V] = 0$ otherwise.\\
In particular, the cross term in \eqref{eq:Ric trivilizingoffdiag} can be made as small as we wish by choosing $\delta(\beta)$ small enough in the interior of $[2,3]$. 
This shows that the Ricci curvature of $(S^3\times S^2,g^{\alpha}_{\beta})$ is indeed positive for any $\beta\in [2,3]$ provided that $\delta(\beta)$ is small enough in the interior of the interval $[2,3]$, as we claimed. $\qed$

\subsection{A more explicit family of diffeomorphisms}\label{subsection:isotopy class}

We explain how to modify the construction in the previous subsections in order to obtain a diffeomorphism $\phi_{2k} : S^3 \times S^2 \to S^3 \times S^2$ isotopic equivalent to $\psi: S^3 \times S^2 \to S^3 \times S^2$ satisfying 
\begin{equation}\label{eq:z1}
	(s_1,s_2) \to (s_1, \psi_{s_1}(s_2)) \, , \quad \psi_{s_1}\in O(3)\, , \, \, \forall\, s_1\in S^3\, .
\end{equation}
This further property was helpful in order to understand the diffeomorphism type of the universal cover for our counterexamples in Section~\ref{sec:univdiffeo}.\\

We start from the explicit action twisting diffeomorphisms that we built in \cite{BrueNaberSemolaMilnor1}.
Let $u,z\in S^3$. We write $u=(u_1,u_2)$, $z=(z_1,z_2)$, where $u_1,u_2,z_1,z_2\in \mathbb C$. 
Set
\begin{align}
	\Phi_k\big(u_1,u_2,z_1,z_2\big) &:= \Big(u_1,u_2, \frac{1}{\sqrt{|u_1|^{2k} + |u_2|^{2k}}}(\bar u_1^k, -u_2^k)\cdot (z_1,z_2)\Big)\notag\\
	&=\Big(u_1,u_2, \frac{1}{\sqrt{|u_1|^{2k} + |u_2|^{2k}}}(\bar u_1^kz_1+u_2^k\bar z_2,-u_2^k\bar z_1+\bar u_1^k z_2)\Big)\, ,
\end{align}
where $\cdot$ denotes the product of $S^3$ as Lie-group.
With this choice, we have the  equivariance property 
\begin{align}
	\Phi_k\big(\theta\cdot_{(1,k)}(u_1,u_2,z_1,z_2)\big) = \theta\cdot_{(1,0)}\Phi_k(u_1,u_2,z_1,z_2)\, .
\end{align}
Moreover, $\Phi_k$ is equivariant with respect to the natural right $S^3$-action on the second $S^3$-factor, i.e.
\begin{equation}
	\Phi_k(u, z \cdot g) = \phi_k(u,z)\cdot g
	\, , \quad
	\text{for every $g\in S^3$}\, .
\end{equation}
So, we can quotient by the right Hopf-action in the second $S^3$ factor obtaining a diffeomorphism satisfying the equivariance
\begin{equation}
	\hat \Psi(\theta\cdot _{(1,2k)}(s_1,s_2)) 
	= \theta\cdot_{(1,0)}\hat{\Psi}(s_1,s_2)\, ,
	\quad s_1\in S^3\, , \, s_2\in S^2\, .
\end{equation}
In particular, the quotient map
\begin{equation}
	\Psi : N \to S^2\times S^2 \, ,
\end{equation}
is an isomorphism of $S^2$-bundles over $S^2$. 

Let $\Phi_3 = \Psi^* \Phi_{\rm flat}$, where $\Phi_{\rm flat}$ is the flat Ehresmann connection in $S^2\times S^2$. With this choice of $\Phi_3$ in subsection \ref{subsec:flattenconnection}, we conclude that $\Psi: (N,h_3^\alpha) \to (S^2, g_{S^2_{1/2}})$ is an isometry. So, following the argument in the last part of subsection \ref{subsec:flattenconnection}, we conclude that the twisting diffeomorphism $\phi_{2k}: S^3\times S^2 \to S^3 \times S^2$ is obtained by lifting $\Psi$.

In particular, $\phi_{2k} \circ \hat \Psi^{-1} : S^3 \times S^2 \to S^3 \times S^2$ is a Gauge transformation of $S^3\times S^2$ thought of as an $S^1$-bundle with respect to the left Hopf-action in the $S^3$-factor. Lemma \ref{lemma:Gauge} ensures that $\phi_{2k} \circ \hat \Psi^{-1}$ is isotopic to the identity. So, $\phi_{2k}$ is isotopic to $\Psi$. It is easy to check that $\Psi$ has the sought structure \eqref{eq:z1}.\\


\begin{thebibliography}{GMS13}



\bibitem[Be]{Besse}
\textsc{Besse, Arthur L.,} 
\textit{Einstein manifolds.} 
Reprint of the 1987 edition. Classics in Mathematics. Springer-Verlag, Berlin, 2008. xii+516 pp.

\bibitem[BK]{BettiolKrishnan}
\textsc{Bettiol, Renato G.; Krishnan, Anusha M.,}
\textit{Four-dimensional cohomogeneity one Ricci flow and nonnegative sectional curvature.}
Comm. Anal. Geom. {\bf 27} (2019), no. 3, 511–527.

\bibitem[BNS]{BrueNaberSemolaMilnor1}
\textsc{Bru\`e, Elia; Naber, Aaron; Semola, Daniele,}
\textit{Fundamental Groups and the Milnor Conjecture,}
Ann. of Math. (2) {\bf 201} (2025), no. 1, 225--289.


\bibitem[CV]{CohnVossen}
\textsc{Cohn-Vossen, S.,}
\textit{K\"urzeste Wege und Totalkrümmung auf Fl\"achen.}
Compos. Math. {\bf 2} (1935), 69-133.

\bibitem[GPT]{GilkeyParkTuschmann}
\textsc{Gilkey, Peter B.; Park, JeongHyeong; Tuschmann, Wilderich,} 
\textit{Invariant metrics of positive Ricci curvature on principal bundles.} 
Math. Z. {\bf 227} (1998), no. 3, 455–463.


\bibitem[L]{Liu}
\textsc{Liu, Gang,}
\textit{3-manifolds with nonnegative Ricci curvature.}
Invent. Math. {\bf 193} (2013) no. 2, 367-375.




\bibitem[O]{Oneill}
\textsc{O'Neill, Barrett,} 
\textit{The fundamental equations of a submersion.} 
Michigan Math. J. {\bf 13} (1966), 459–469.


\bibitem[Mi]{Milnor}
\textsc{Milnor, John,} 
\textit{A note on curvature and fundamental group.} 
J. Differential Geometry {\bf 2} (1968), 1–7. 



\bibitem[P1]{Pan}
\textsc{Pan, Jiayin,}
\textit{Nonnegative Ricci curvature, stability at infinity and finite generation of fundamental groups.} 
Geom. Topol. {\bf 23} (2019) no. 6, 3203-3231.


\bibitem[P2]{Panb}
\textsc{Pan, Jiayin,}
\textit{Nonnegative Ricci curvature, almost stability at infinity, and structure of fundamental groups.}
Preprint arXiv:1809.10220.


\bibitem[P3]{Pansurvey}
\textsc{Pan, Jiayin,}
\textit{The fundamental groups of open manifolds with nonnegative Ricci curvature.} 
SIGMA, Symmetry Integrability Geom. Methods Appl. {\bf 16}, Paper 078, 16 p. (2020).


\bibitem[PR]{PanRong}
\textsc{Pan, Jiayin; Rong, Xiaochun,}
\textit{Ricci curvature and isometric actions with scaling nonvanishing property.}
Preprint arXiv:1808.02329. 




\bibitem[ShSo]{ShenSormani}
\textsc{Shen, Zhongmin; Sormani, Christina,}
\textit{The topology of open manifolds with nonnegative Ricci curvature.}
Commun. Math. Anal., Conference 1, 20-34 (2008).

\bibitem[So]{Sormanilinear}
\textsc{Sormani, Christina,}
\textit{Nonnegative Ricci curvature, small linear diameter growth and finite generation of fundamental groups.}
J. Differential Geom. {\bf 54} (2000), no. 3, 547–559.


\bibitem[S]{Steenrod}
\textsc{Steenrod, Norman,}
\textit{The Topology of Fibre Bundles.}
Princeton Math. Ser., vol. 14
Princeton University Press, Princeton, NJ, 1951. viii+224 pp.


\bibitem[We]{Wei}
\textsc{Wei, Guofang,}
\textit{Examples of complete manifolds of positive Ricci curvature with nilpotent isometry groups.}
Bull. Amer. Math. Soc. (N.S.) {19} (1988), no. 1, 311–313.

\bibitem[Wu]{Wu}
\textsc{Wu, Bing Ye,}
\textit{On the fundamental group of Riemannian manifolds with nonnegative Ricci curvature.}
Geom. Dedicata (2013) {\bf 162}, 337-344.


\end{thebibliography}
\end{document}